\documentclass[11pt,a4paper,oneside]{article}
\usepackage[margin=0.8in]{geometry}
%\setlength{\textwidth}{160mm}
%\setlength{\textheight}{235mm}
%\setlength{\oddsidemargin}{0in}
%\setlength{\topskip}{1cm}
%\setlength{\topmargin}{-0.3in}
%\raggedbottom
%\setlength{\abovedisplayskip}{3mm}
%\setlength{\belowdisplayskip}{3mm}
%\setlength{\abovedisplayshortskip}{0mm}
%\setlength{\belowdisplayshortskip}{2mm}
%\setlength{\normalbaselineskip}{12pt}
%\setlength{\textfloatsep}{0pt}
\usepackage{kbordermatrix}
\usepackage{graphicx} % Allows including images

%\normalbaselines 
\usepackage{amsmath}
\usepackage{amssymb}
\usepackage{mathtools}
\usepackage{mathcomp}
\usepackage{textcomp}
\usepackage{amsthm}

\usepackage{bm}
\usepackage{url}
\newtheorem{thm}{Theorem}[section]\theoremstyle{plain}
\newtheorem{theorem}[thm]{Theorem}\theoremstyle{plain}
\newtheorem{proposition}[thm]{Proposition}\theoremstyle{plain}
\newtheorem{lemma}[thm]{Lemma}\theoremstyle{plain}
\theoremstyle{plain}

\theoremstyle{plain}
\theoremstyle{plain}
\newtheorem{corollary}[thm]{Corollary}\theoremstyle{plain}
\theoremstyle{plain}
\newtheorem{conj}{Conjecture}\theoremstyle{plain}
\theoremstyle{plain}
\newtheorem{problem}[conj]{Problem}\theoremstyle{plain}
\theoremstyle{plain}
\newtheorem{conjecture}[conj]{Conjecture}\theoremstyle{plain}
\theoremstyle{plain}
% SHOW ALL LABELS 
%\usepackage[notcite,notref]{showkeys}
%\renewcommand*{\showkeyslabelformat}[1]{%
%  \fbox{\parbox{2cm}{\normalfont\small\sffamily#1}}}

\DeclareMathOperator{\rank}{rank}

\newcommand{\cW}{{\cal W}}

\newcommand{\cL}{{\cal L}}

\newcommand{\s}{{\rm si}}

%\newcommand{\cl}{\mathrm{cl}}

%%%%%%%%%%%%%%%%%%%%%%%%%%%%%%%%%%%%%%%%%%%%%%%%%%%%%%%%

\DeclareMathOperator{\rd}{rd}

\DeclareMathOperator{\Gram}{Gram}
%%%%%%%%%%%%%%%%%%%%%%%%%%%%%%%%%%%%%%%%%%

\title{Super Stable Tensegrities and the Colin de Verdi\`{e}re Number $\nu$}
\author{Ryoshun Oba and Shin-ichi Tanigawa\thanks{Department of Mathematical Informatics, Graduate School of Information Science and Technology, University of Tokyo, 7-3-1 Hongo, Bunkyo-ku, 113-8656,  Tokyo Japan. email: {\tt ryoshun\_oba@mist.i.u-tokyo.ac.jp, tanigawa@mist.i.u-tokyo.ac.jp}}}
\begin{document}
\maketitle

\begin{abstract}
    % A super stable tensegrity introduced by Connelly in 1982 is a globally rigid discrete structure made from stiff bars or struts connected by cables with tension.
    % For synthesis of tensegrity structures, it is an important engineering question to identify the graphs of super stable tensegrities. 
    % In this paper we show an exact relation between the maximum dimension that 
    % a multigraph can be realized as a super stable tensegrity and Colin de Verdi\`{e}re number~$\nu$ from spectral graph theory.
    % As a corollary we obtain a combinatorial characterization of multigraphs that can be realized as 3-dimensional super stable tensegrities.

    % A tensegrity is a graph realized in Euclidean space whose edges consist of bars/cables/struts, which give equality/upper bound/lower bound constraint to the distance between the endvertices.
    % Based on the super stability of a tensegrity, 
    A super stable tensegrity introduced by Connelly in 1982 is a globally rigid discrete structure made from stiff bars and struts connected by cables with tension.
    We introduce the super stability number of a multigraph as the maximum dimension that 
    a multigraph can be realized as a super stable tensegrity, and show that it equals the Colin de Verdi\`{e}re number~$\nu$ minus one.
    As a corollary we obtain a combinatorial characterization of multigraphs that can be realized as three-dimensional super stable tensegrities.
    We also show that, for any fixed $d$, there is an infinite family of $3$-regular graphs that can be realized as $d$-dimensional injective super stable tensegrities.
\end{abstract}

 \medskip \noindent {\bf Keywords:} graph rigidity, tensegrities, super stability, Colin de Verdi\`{e}re number~$\nu$, minor monotonicity

\section{Introduction}\label{sec:intro}
A tensegrity is a stable discrete structure made from stiff bars and struts\footnote{A strut is a rod-shaped material which can stretch but cannot shrink.} connected by cables with tension. See Figure~\ref{fig:tensegrity}.
Since its invention by the American sculptor Kenneth Snelson, tensegrities have attracted wide attention not only in structural engineering  but also in  mathematics, see, e.g.,~\cite{CG}.
One of the challenges in the study of tensegrities is to understand the impact of the combinatorics underlying the stable tensegrities.
The aim of this paper is to relate such a combinatorial question in the tensegrity analysis to a topic in spectral graph theory and derive combinatorial characterizations of the graphs of stable tensegrities.

\begin{figure}[h]
    \centering
    \begin{minipage}{0.4\textwidth}
    \centering
    \includegraphics[scale=0.13]{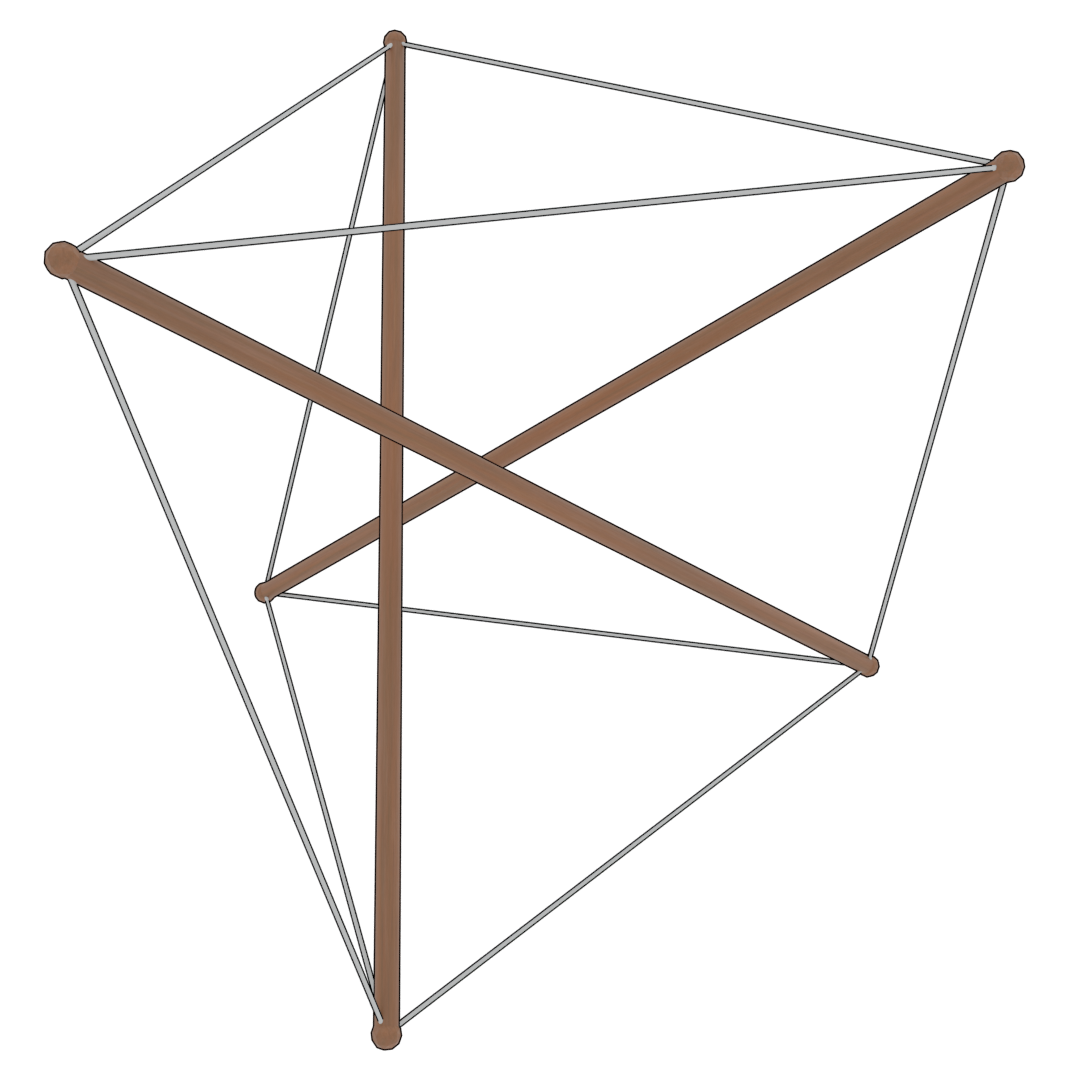}
    \par 
    (a)
    \end{minipage}
    \begin{minipage}{0.4\textwidth}
    \centering
    \includegraphics[scale=0.13]{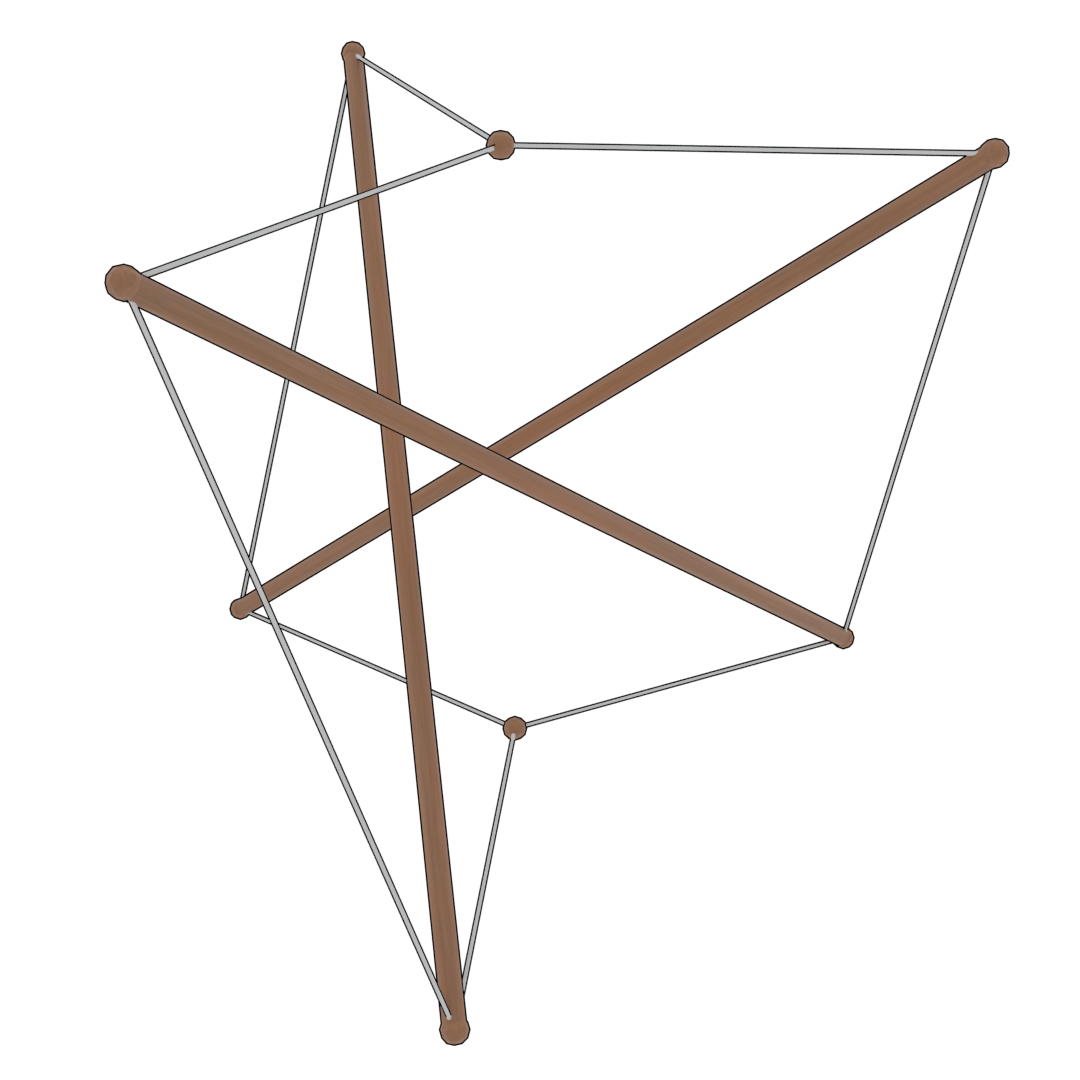}
    \par
    (b)
    \end{minipage}
    \caption{(a) Prism tensegrity and (b) dihedral star tensegrity. They are three-dimensional tensegrity realizations of $K_{2,2,2}$ and $Q_3$, respectively.}
    \label{fig:tensegrity}
\end{figure}

Mathematically, a {\em tensegrity} in $\mathbb{R}^d$ is a triple $(G,\sigma, p)$,
where $G$ is a multigraph\footnote{Throughout the paper, a multigraph is an undirected graph which may contain parallel edges but no loops.} with a finite vertex set  $V(G)$ and edge set $E(G)$, $\sigma:E(G)\rightarrow \{+, -\}$ is a sign function, and $p:V(G)\rightarrow \mathbb{R}^d$ is  a {\em point configuration}\footnote{In this paper, point configurations may not necessarily be injective.} in the $d$-dimensional Euclidean space  $\mathbb{R}^d$.
A tensegrity $(G,\sigma, p)$ is called a {\em tensegrity realization} of a multigraph $G$.
A tensegrity $(G,\sigma, p)$ models a physical structure by regarding 
each edge $e$ with $\sigma(e)=+$ as a {\em cable} and each edge $e$ with $\sigma(e)=-$ as a {\em strut}.
Thus,  a  {\em deformation} of a tensegrity $(G,\sigma, p)$ is defined as another tensegrity $(G,\sigma,q)$ with the same underlying signed graph $(G,\sigma)$ and 
\[
    \begin{array}{cc}
        \|p(i)-p(j)\| \geq \|q(i)-q(j)\| & \text{if $\sigma(ij)=+$} \\
        \|p(i)-p(j)\| \leq \|q(i)-q(j)\| & \text{if $\sigma(ij)=-$},
    \end{array}
\]
for every edge $ij\in E(G)$,
where $\|\cdot\|$ denotes the Euclidean norm.
A deformation represents a possible deformed tensegrity of the given tensegrity $(G,\sigma,p)$  under the cable and strut constraints of $(G,\sigma,p)$. 

It should be remarked that the notation defined above is slightly different from the standard, e.g.,~\cite{C82,CG,RW}.
Conventionally, the underlying graph of a tensegrity is always a simple graph and a sign function $\sigma$ allows to take value $0$ to represent a bar (which cannot stretch or shrink). 
In our model, a {\em bar} can be represented by parallel edges with opposite signs, and hence ours is essentially identical to the standard model.
%and hence a tensegrity is said to be a {\em bar-joint framework} if every edge is in parallel with another edge  in the underlying graph.
However, our unconventional  notation using multigraphs turns out to be crucial in the development of a new combinatorial theory.

Once we have defined deformations, the rigidity of tensegrities can be naturally defined as follows.
Two point configurations $p, q$ of the same set in $\mathbb{R}^d$ are said to be {\em congruent} if one is the image of a Euclidean isometry applied to the other,
and two tensegrities $(G, \sigma, p)$ and $(G, \sigma, q)$ are {\em congruent} if 
$p$ and $q$ are congruent.
A tensegrity $(G,\sigma, p)$ in $\mathbb{R}^d$ is {\em globally rigid} if 
any deformation  of $(G,\sigma,p)$ in $\mathbb{R}^d$ is congruent to $(G,\sigma,p)$.
It is called a {\em locally rigid} if there is an open neighborhood $N$ of $p$ in the space of point  configurations  such that 
any deformation $(G,\sigma,q)$ of $(G,\sigma,p)$ with $q\in N$ is congruent to $(G,\sigma,p)$.

Though testing the local/global rigidity of bar-joint frameworks is known to be a hard problem, the problem becomes tractable if point configurations are assumed to be generic, see e.g.~\cite{JW}. However, for tensegrities, the generic assumption does not seem as powerful as the ordinary bar-joint case, see, e.g.~\cite{G21,JJK}.
Hence, the standard approach  in practice is to check a sufficient condition. Commonly used sufficient conditions for rigidity are infinitesimal rigidity~\cite{RW}, prestress stability~\cite{CW}, and super stability~\cite{C82}.

In this paper, we focus on super stability. 
This concept was introduced by Connelly~\cite{C82} in 1982 and it is currently one of few general sufficient conditions for the global rigidity of tensegrities.
Roughly speaking, a tensegrity becomes super stable if the point configuration is the unique minimizer of a convex harmonic potential function, and hence  it has an advantage of being robust with respect to perturbations. Such an advantage has been well appreciated and used in the design of new tensegrities in structural engineering, see, e.g.,~\cite{CG,ZO}. However, unlike the recent rapid development of the global rigidity theory of bar-joint frameworks (see, e.g.,~\cite{JW}), little mathematical progress has been made for the combinatorics of tensegrities.

A tensegrity is said to be {\em $d$-dimensional}\footnote{Throughout the paper, 
we distinguish a {\em tensegrity in $\mathbb{R}^d$} and a {\em $d$-dimensional tensegrity}.
A tensegrity in $\mathbb{R}^d$ just means a tensegrity each of whose point is in $\mathbb{R}^d$ and the affine span of the points may not be $d$-dimensional.}
if the dimension of the affine span of its point configuration is equal to $d$. 
In this paper, we study the class of multigraphs which can be realized as $d$-dimensional super stable tensegrities for a given positive integer $d$.
We show that (the complement of) this multigraph class admits a characterization in terms of forbidden minors,
and give a complete list of minors in the case when $d\leq 3$.
Specifically, we obtain the following.
\begin{theorem}\label{thm:characterization_super_stable}
Let $G$ be a multigraph.
Then $G$ has a three-dimensional super stable tensegrity realization if and only if it contains a multigraph in  Figure~\ref{fig:list} as a minor.
\end{theorem}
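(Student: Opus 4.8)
The plan is to deduce the statement from our main theorem, which asserts that the super stability number of a multigraph $G$ equals $\nu(G)-1$, together with minor-monotonicity and the explicit obstruction set for the condition $\nu\le 3$. First, a $d$-dimensional super stable tensegrity realization of $G$ exists exactly when the super stability number of $G$ is at least $d$: one direction is the definition, and for the other we use the fact (which comes out of the positive semidefinite description underlying the main theorem) that if $G$ admits a super stable realization of some dimension at least $3$, then it admits one of affine dimension exactly $3$. Hence the assertion reduces to the purely spectral statement: $G$ has a three-dimensional super stable tensegrity realization if and only if $\nu(G)\ge 4$, i.e.\ if and only if $\nu(G)\not\le 3$.

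Next I would invoke minor-monotonicity of $\nu$ in our multigraph setting: if $H$ is a minor of $G$, then $\nu(H)\le\nu(G)$. Consequently $\mathcal{C}_3:=\{G:\nu(G)\le 3\}$ is closed under taking minors, so by the graph minor theorem it is characterized by a finite list $\mathcal{F}$ of forbidden minors, and $\nu(G)\ge 4$ holds precisely when $G$ contains some member of $\mathcal{F}$ as a minor. It remains to identify $\mathcal{F}$ with the list in Figure~\ref{fig:list}. This has two essential parts. (i) Each multigraph $H$ in Figure~\ref{fig:list} satisfies $\nu(H)\ge 4$; since these are specific small (multi)graphs, this is a finite verification, carried out for each $H$ by exhibiting an admissible positive semidefinite matrix of corank at least $4$ with the prescribed zero/sign pattern — equivalently, an explicit three-dimensional super stable realization of $H$ with a positive semidefinite proper stress matrix of maximal rank. (ii) Completeness: every multigraph $G$ with $\nu(G)\ge 4$ contains some $H\in$ Figure~\ref{fig:list} as a minor, equivalently every multigraph with no such minor lies in $\mathcal{C}_3$. (For the sharper claim that the figure lists \emph{exactly} the forbidden minors one additionally checks minor-minimality of each $H$, which is again a finite computation; but only (i) and (ii) are needed for the stated biconditional, with the ``if'' direction following from (i) plus minor-monotonicity and the ``only if'' direction from (ii).)

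Part (ii) is the crux and the main obstacle. For simple graphs the structure of graphs with small $\nu$ has been analyzed (van der Holst and related work), and the task here is to obtain the multigraph version; this is genuinely needed in our framework, since a pair of parallel edges of opposite sign models a bar and thereby \emph{relaxes} the sign constraint on the corresponding matrix entry, so the multigraph obstruction set properly extends the simple one by a controlled number of new multigraph obstructions in which certain edges are promoted to bars. Concretely, I would first record how adding a parallel edge affects $\nu$ and the set of admissible stress patterns, reducing the multigraph classification to the simple one augmented with ``bar'' labels on edges; I would then either bootstrap from the known simple-graph analysis or run a self-contained structural (splitter-type) argument, studying a hypothetical minor-minimal $G$ with $\nu(G)=4$ and no minor in Figure~\ref{fig:list}, using connectivity reductions to pass to a $3$-connected core, and deriving a contradiction from the decomposition structure forced by $\nu\le 3$. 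The remaining ingredients — the reduction via the main theorem, minor-monotonicity of $\nu$, and the finite checks in (i) and the minimality remark — are routine given the machinery developed earlier in the paper.
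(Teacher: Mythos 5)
Your high-level reduction is the same as the paper's: pass from super stability to $\nu$ via the identity $\lambda(G)=\nu(G)-1$, then characterize $\nu\le 3$ by forbidden minors. However, there are two genuine problems with your plan.

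First, the step ``if $G$ admits a super stable realization of some dimension at least $3$, then it admits one of affine dimension exactly $3$'' is asserted without proof and is not an immediate consequence of the semidefinite description. It is \emph{true}, but in the paper it is obtained only \emph{after} the forbidden-minor list is in hand: once one knows that $\lambda(G)\ge 3$ is equivalent to $G$ containing one of the multigraphs $H$ of Figure~\ref{fig:list} as a minor, and that each such $H$ has a concrete three-dimensional super stable realization (because $\nu(H)=4$, so $\lambda(H)=3$), Theorem~\ref{thm:minor_monotone} lifts that realization to $G$ at the \emph{same} dimension. There is no obvious way to ``fold'' a $d$-dimensional super stable realization with $d>3$ down to a $3$-dimensional one directly inside ${\cal L}^*(G)\cap{\cal L}^n_{+,n-4}$ while preserving the pattern and the Euclidean SAP; using this as a starting point makes the argument circular (you would be assuming a fact that itself rests on the classification you are trying to prove).

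Second, and more seriously, your part~(ii) proposes to re-derive the classification of multigraphs with $\nu\le 3$ from scratch, on the premise that the known results cover only simple graphs and that the multigraph version (where bars relax the sign constraints) is new and ``genuinely needed.'' This premise is incorrect: van der Holst~\cite{H02} introduced $\nu$ for multigraphs precisely in this ``magnetic'' form and proved the forbidden-minor characterization of $\nu\le 3$ for multigraphs, with exactly the seven obstructions of Figure~\ref{fig:list}. The paper simply cites this result; nothing remains to be proved once the identity $\lambda=\nu-1$ and Theorem~\ref{thm:minor_monotone} are in place. Re-proving van der Holst's theorem would be a substantial, independent project on the order of a full paper, and your sketch (appealing to Robertson--Seymour for finiteness, then a splitter-type analysis of a hypothetical minor-minimal counterexample) does not come close to discharging it. Note also that invoking the graph minor theorem is both unnecessary (the explicit list is what is needed, not mere finiteness) and slightly delicate in the multigraph setting unless one first observes that edge multiplicities above two are irrelevant for $\nu$.

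In short: replace the unproved ``dimension-$3$ realization exists whenever $\lambda\ge 3$'' claim by the argument via Theorem~\ref{thm:minor_monotone} and the explicit realizations of the obstructions, and replace your part~(ii) by a citation of van der Holst~\cite{H02}.
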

The proof even shows that, if $G$ has a three-dimensional super stable realization, then such a realization of $G$ can be obtained from a super stable realization of a graph in Figure~\ref{fig:list} by a sequence of edge addition and vertex splitting (that is, the inverse of edge contraction) operations. 
In this sense, super stable realizations of the graphs in Figure~\ref{fig:list} are the bases of the construction. 
Moreover, any graph in Figure~\ref{fig:list} except $K_5$ can be constructed from the cube graph $Q_3$ (the second left graph in the top row of Figure~\ref{fig:list}) by a sequence of Y-$\Delta$ operations. Since a Y-$\Delta$ operation preserves super stability, we may even consider that $K_5$ and $Q_3$ are the bases of the underlying multigraphs of the three-dimensional super stable tensegrities.
Interestingly, a super stable realization of $Q_3$ has been well investigated as a {\em dihedral star-shaped tensegrity} in the engineering context~\cite{ZGCO}, see Figure~\ref{fig:tensegrity}(b).

% \begin{figure}[t]
% \centering
% \includegraphics[scale=1]{}
% \caption{List of minors in the characterization of multigraphs which can be realized as 3-dimensional super stable tensegrities.}
% \label{fig:list}
% \end{figure}
\begin{figure}[t]
\centering
\begin{minipage}{0.24\textwidth}
\centering
\includegraphics[scale=1]{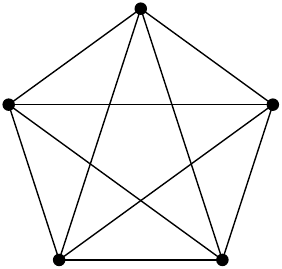}
\par
$K_5$
\end{minipage}
\begin{minipage}{0.24\textwidth}
\centering
\includegraphics[scale=1]{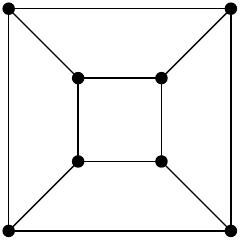}
\par
$Q_3$
\end{minipage}
\begin{minipage}{0.24\textwidth}
\centering
\includegraphics[scale=1]{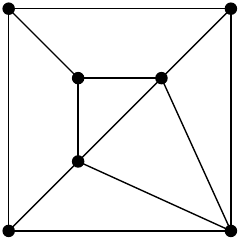}
\par
\ 
\end{minipage}
\begin{minipage}{0.24\textwidth}
\centering
\includegraphics[scale=1]{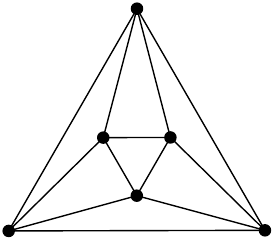}
\par
$K_{2,2,2}$
\end{minipage}

\medskip

\begin{minipage}{0.24\textwidth}
\centering
\includegraphics[scale=1]{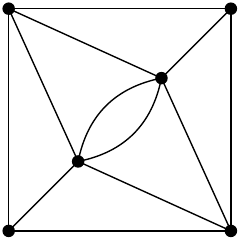}
\par
\ 
\end{minipage}
\begin{minipage}{0.24\textwidth}
\centering
\includegraphics[scale=1]{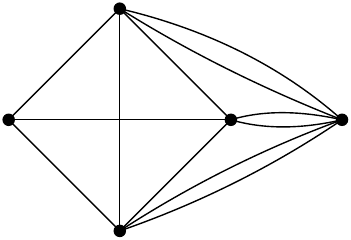}
\par
\end{minipage}
\begin{minipage}{0.24\textwidth}
\centering
\includegraphics[scale=1]{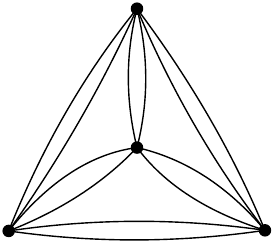}
\par
$K_4^=$
\end{minipage}
\caption{The list of minors in the characterization of Theorem~\ref{thm:characterization_super_stable}.}
 \label{fig:list}
 \end{figure}

The proof of our characterization is actually a corollary of van der Holst's theorem~\cite{H02} on the characterization of multigraphs with bounded Colin de Verdi\`{e}re number $\nu$.
We will establish an exact relation between the super stability of tensegrities and the Colin de Verdi\`{e}re number $\nu$, a spectral-based graph parameter introduced by Colin de Verdi\`{e}re~\cite{C98} in 1998.

We should also remark that the realizations in Theorem~\ref{thm:characterization_super_stable} are not guaranteed to be injective, and some cables or struts may  have zero length in the realizations. However, the technique for constructing super stable realizations with respect to minor operations can be further extended, and we are able to derive sufficient combinatorial conditions for injective realizability. Below we give an example of our sufficient conditions.
\begin{theorem}\label{thm:Kt}
Suppose a 2-connected multigraph $G$ contains $K_{d+2}$ as a minor.
Then $G$ has a $d$-dimensional injective super stable tensegrity realization.
\end{theorem}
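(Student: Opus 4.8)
The plan is to transport an explicit injective super stable realization of $K_{d+2}$ along the graph operations that build $G$ from a $K_{d+2}$-minor model. Concretely, I would prove: (a) $K_{d+2}$ has a $d$-dimensional injective super stable tensegrity realization; and (b) if a $2$-connected multigraph has a $d$-dimensional injective super stable realization and one applies to it a single edge addition or a single vertex split so that the result is again $2$-connected, then the result also has one. Since $G$ is $2$-connected and contains $K_{d+2}$ as a minor, $G$ can be reached from $K_{d+2}$ by such a sequence of operations, so (a) and (b) give the theorem by induction along that sequence.

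For (a): take a generic point configuration $p$ of the $d+2$ vertices of $K_{d+2}$ in $\mathbb{R}^d$, so its affine span is $\mathbb{R}^d$. The self-stress space of $(K_{d+2},p)$ is one-dimensional; let $\omega$ span it, with stress matrix $\Omega$. Since $\mathbf 1$ and the $d$ coordinate vectors of $p$ lie in $\ker\Omega$, the rank of $\Omega$ is at most $1$; as $\omega\neq 0$ forces $\Omega\neq 0$, we get $\rank\Omega=1$, i.e.\ corank $d+1$, as required for super stability. After possibly replacing $\omega$ by $-\omega$ (which swaps cables and struts), $\Omega$ is positive semidefinite, and letting $\sigma$ declare each edge a cable or a strut according to the sign of the corresponding entry of $\omega$ makes $\omega$ a proper self-stress. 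Finally, the edge directions do not lie on a conic at infinity: the matrices $(p(i)-p(j))(p(i)-p(j))^{\top}$ span the symmetric $d\times d$ matrices, since there are $\binom{d+2}{2}>\binom{d+1}{2}$ of them and $p$ is generic. Hence $(K_{d+2},\sigma,p)$ is $d$-dimensional, injective, and super stable.

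For the reachability of $G$ from $K_{d+2}$: fix a minimal $K_{d+2}$-minor model of $G$, with branch sets $B_1,\dots,B_{d+2}$ each spanned by a tree $T_i\subseteq G$ and with exactly one edge of $G$ between each pair of branches; by minimality every leaf of every $T_i$ carries such a crossing edge. Let $H\subseteq G$ be the union of the $T_i$ and these crossing edges. Using the leaf property one checks that $H$ is $2$-connected and that $H$ arises from $K_{d+2}$ by uncontracting the edges of each $T_i$ from a leaf root outward, each step being a vertex split that keeps both new vertices of degree $\ge 2$ and the multigraph $2$-connected. Since $G$ is $2$-connected and $H$ is a $2$-connected subgraph, $G$ is obtained from $H$ by successively adding ears and then the remaining edges; an ear of length $\ell$ is one edge addition followed by $\ell-1$ subdivisions, and each subdivision is a vertex split, so the whole construction uses only edge additions and vertex splits while every intermediate multigraph stays $2$-connected.

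The heart of the proof, and the step I expect to be the main obstacle, is statement (b) — preserving \emph{injectivity}. Edge addition is harmless: add the new edge as a cable carrying zero stress, which leaves the stress matrix, its corank, the affine span, and injectivity unchanged while only strengthening the conic condition. For a vertex split $w\mapsto\{w_1,w_2\}$ I would first perform it ``flatly'', in the form in which vertex splitting is shown in this paper to preserve super stability, placing both $w_1$ and $w_2$ at $p(w)$: this yields a positive semidefinite stress matrix of corank $d+1$ and preserves the conic condition, but is not injective. Then I would perturb $w_2$ slightly off $w_1$, adjusting the self-stress to retain the positive semidefinite, corank-$(d+1)$ stress matrix, until $w_2$ lands at a point distinct from $w_1$ and from all other vertices. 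The delicate part is showing such a perturbation always exists; this is exactly where $2$-connectivity — rather than mere connectivity — is used, since it forces the newly created vertex to have degree at least two, so it is not rigidly tied to $p(w_1)$ and there is room to separate it. Granting that every step of the sequence above can be realized injectively, induction produces a $d$-dimensional injective super stable realization of $G$.
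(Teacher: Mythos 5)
The structural skeleton of your argument---construct an injective super stable realization of $K_{d+2}$ in $\mathbb{R}^d$ and then transport it to $G$ along a sequence of vertex splits followed by ear/edge additions, all within the class of $2$-connected multigraphs---is exactly the skeleton the paper uses (via Theorem~\ref{thm:injective}, Lemma~\ref{lem:ear}, and Lemma~\ref{lem:edge_contraction}). Your part (a) and the reachability argument are essentially right. But two of your steps have real problems. The lesser one is edge addition: ``add the new edge as a cable carrying zero stress'' does not certify super stability in this paper's model, because the stress condition demands a \emph{strictly} proper stress, $\sigma(ij)\omega(ij)>0$ for \emph{every} edge, with no zeros allowed. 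That is precisely why Lemma~\ref{lem:edge_removal} is nontrivial and uses a transversality perturbation in the space of Laplacians to force the new entry to be nonzero. The conclusion you want is true, but your one-line justification is not a proof of it.

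The larger gap is the vertex-split step, which you correctly single out as the main obstacle but then attribute to $2$-connectivity alone (``it forces the newly created vertex to have degree at least two \dots there is room to separate it''). This is false: $2$-connectivity and degree $\ge 2$ at the split vertex are \emph{not} sufficient. The paper gives a counterexample at the start of Section~\ref{subsec:contraction}: $(C_4^= - e)/e' \cong K_3^=$ is $2$-connected, has an injective two-dimensional super stable realization, and the split vertex has degree $\ge 2$, yet $C_4^= - e$ has \emph{no} injective two-dimensional super stable realization. What actually drives the induction in Lemma~\ref{lem:edge_contraction} is the invariant that the realization carry a \emph{non-splittable} equilibrium stress, a notion you never introduce. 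In the $K_3^=$ example, every equilibrium stress of an injective planar realization is splittable (one can take $F$ to be exactly one edge of a parallel pair), which is why the separation you hope for fails. To close your argument you would need to (i) observe that the chosen realization of $K_{d+2}$ admits a non-splittable stress---for a general-position configuration this follows from Lemma~\ref{lem:split deg}, since each vertex has degree $d+1$ and its closed neighborhood affinely spans $\mathbb{R}^d$---and (ii) show that vertex splitting preserves the existence of such a stress, which is the content of the ``Moreover'' part of Lemma~\ref{lem:edge_contraction} and where all the technical work lies. Without (i) and (ii) the perturbation ``until $w_2$ lands at a distinct point'' has no reason to exist, and indeed sometimes it does not.
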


The dihedral star tensegrity in Figure~\ref{fig:tensegrity}(b) has attracted attention since the underlying graph is $3$-regular and is much sparser than tensegrities satisfying Maxwell count condition~\cite{CG}.
Since there is an infinite family of 3-regular graphs that contains $K_{d+2}$ as a minor, 
Theorem~\ref{thm:Kt} implies that
there is an  infinite number of such examples in any dimension.
\begin{corollary}
For any fixed $d$, there is an infinite family of 3-regular graphs that can be realized as a $d$-dimensional injective super stable tensegrity. 
\end{corollary}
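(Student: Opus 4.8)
The plan is to deduce the corollary directly from Theorem~\ref{thm:Kt}: it suffices to produce, for each fixed $d$, an infinite family of 2-connected 3-regular multigraphs each of which contains $K_{d+2}$ as a minor. Applying Theorem~\ref{thm:Kt} to every member of such a family then gives the required $d$-dimensional injective super stable tensegrity realizations, and since the members can be chosen with pairwise distinct numbers of vertices, the family is infinite.

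For the base graph I would take the \emph{cubic inflation} $H_0$ of $K_{d+2}$: replace every vertex $v$ of $K_{d+2}$, which has degree $d+1$, by a cycle $C_{d+1}$ on $d+1$ new vertices (when $d=1$ this "cycle" is a pair of parallel edges, which is legitimate in the multigraph setting used throughout the paper), and attach the $d+1$ edges of $K_{d+2}$ incident to $v$ to the $d+1$ vertices of this cycle, one to each. Then every vertex of $H_0$ has degree $2+1=3$, so $H_0$ is 3-regular, and contracting each inflated cycle recovers $K_{d+2}$, so $K_{d+2}$ is a minor of $H_0$. Finally $H_0$ is 2-connected: $K_{d+2}$ is 2-connected, and replacing a vertex of a 2-connected graph by a cycle carrying its incident edges preserves 2-connectivity — two internally disjoint paths between any pair of vertices of $H_0$ are obtained from two internally disjoint paths in $K_{d+2}$ between the corresponding vertices, rerouting the at most one of them that passes through a given original vertex around the relevant inflated cycle.

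To generate larger graphs I would use the standard \emph{handle operation}: given a 3-regular multigraph $H$ with at least two edges, pick two distinct edges $e$ and $f$, subdivide $e$ by a new vertex $u$ and $f$ by a new vertex $w$, and add the edge $uw$. The resulting multigraph $H'$ is again 3-regular, with exactly two more vertices than $H$; it remains 2-connected, since subdividing an edge of a 2-connected graph preserves 2-connectivity and adding an edge never destroys it; and it still contains $K_{d+2}$ as a minor, since deleting $uw$ from $H'$ and then suppressing the degree-two vertices $u$ and $w$ recovers $H$, so $H$ — and hence $K_{d+2}$ — is a minor of $H'$. Setting $H_{n+1}$ to be the result of one handle operation applied to $H_n$, the graphs $H_0, H_1, H_2, \dots$ form an infinite family of 2-connected 3-regular multigraphs each containing $K_{d+2}$ as a minor, and Theorem~\ref{thm:Kt} applied to each $H_n$ finishes the proof.

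There is no genuinely hard step here; the only points requiring (routine) care are the verification that the cubic inflation of $K_{d+2}$ is 2-connected and the bookkeeping showing that neither 2-connectivity nor the $K_{d+2}$ minor is lost under the handle operation. The one place to be mildly careful is the case $d=1$: there "3-regular graph" must be read in the multigraph sense of the paper so that the degenerate inflated $2$-cycle is allowed — or, alternatively, for $d=1$ one may bypass the inflation entirely and start from any 2-connected 3-regular simple graph, each of which already contains $K_3=K_{d+2}$ as a minor.
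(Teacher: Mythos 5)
Your proposal is correct and takes the same route the paper does: invoke Theorem~\ref{thm:Kt} on an infinite family of $2$-connected $3$-regular multigraphs containing $K_{d+2}$ as a minor. The paper merely asserts that such a family exists; you supply an explicit construction (cubic inflation followed by repeated handle operations) together with careful verification of the $2$-connectivity and minor-containment conditions that Theorem~\ref{thm:Kt} actually requires, including the degenerate case $d=1$.
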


As a dual concept of super stable realizability, we also introduce the {\em realizable dimension of multigraphs}, 
which is a tensegrity extension of the realizable dimension of graphs given by Belk and Connelly~\cite{BC}.
We give a forbidden-minor  characterization of multigraphs with  realizable dimension at most one.

\medskip

The paper is organized as follows.
In Section~\ref{sec:super_stable} we review Connelly's super stability theorem and give the definition of  super stable tensegrities. We then introduce new multigraph parameters, super stability number $\lambda$ and realizable dimension ${\rm rd}$, which are the main objects of this study.
In Section~\ref{sec:construction} we show how to build super stable tensegrities based on simple graph operations. 
As a corollary, we prove the minor monotonicity of the super stability number $\lambda$. We will also discuss how to construct injective tensegrity realizations.
A relation between super stability and the Colin de Verdi\`{e}re number $\nu$ is discussed in Section~\ref{sec:characterization_lambda}.
In Section~\ref{sec:relation} we give relations among $\lambda$, ${\rm rd}$, and other basic graph parameters. 
In Section~\ref{sec:characterization_rd} we discuss  the characterization problem of multigraphs with bounded realizable dimension.
%In Section~\ref{sec:dimension} we also explain a relation  to the minimum semidefinite rank.

\vspace{1em}

Throughout the paper we use the following basic terminology from graph theory. 
For a multigraph $G$, $V(G)$ and $E(G)$ denote the vertex set and the edge set of $G$, respectively. 
$N_G(v)$ denotes the set of vertices adjacent to $v$ in $G$
and $E_G(v)$ denotes the set of edges incident to $v$ in $G$.
We sometimes denote an edge $e$ by $e=uv$ when the endvertices of an edge $e$ are $u$ and $v$.

The {\em simplified graph} $\s(G)$ of a multigraph $G$ is a simple graph obtained from $G$ by replacing each parallel edges with a single edge.
For a simple graph $H$, let $H^=$ be the multigraph obtained from $H$ by doubling each edge.
For a multigraph $G$, let $G^==(\s(G))^=$.

In this paper a {\em bar-joint framework} is defined as a tensegrity $(G^=, \sigma, p)$ such that each parallel edge classes has both positive and negative edges (so that the interpoint distance between $p(u)$ and $p(v)$ is fixed for each edge $uv$ in $G$).
We sometimes abbreviate it to $(G,p)$.

\section{Super Stability Number and Realizable Dimension}\label{sec:super_stable}
In this section, we give a formal definition of super stability of tensegrities,
and then introduce two new multigraph parameters, the super stability number and the realizable dimension.

\subsection{Super Stability of Tensegrities}
We follow a modern exposition of super stability in terms of semidefinite programming, see, e.g.,\cite{GT}.

Let $V=\{1,2,\dots, n\}$ be a finite set.
We identify a point configuration $p:V \rightarrow \mathbb{R}^d$ with a matrix $P \in \mathbb{R}^{d \times n}$ whose $i$th column vector is $p(i)$ for all $i \in V$.
For a configuration $p:V \rightarrow \mathbb{R}^d$, the Gram matrix of $p$, denoted by $\Gram(p)$, is a $n \times n$ matrix whose $ij$-th entry is $p(i)^\top p(j)$. Equivalently, $\Gram(p)=P^\top P$.
Since the properties of tensegrities we are interested in are invariant by translation, 
we may always assume that the center of gravity of $\{p(i):i \in V\}$ is at the origin,
that is, $P\mathbf{1}_n=\bm{0}$, where $\mathbf{1}_n$ denotes the all-one vector in $\mathbb{R}^n$.
%When $\{p(i):i \in V\}$ affinely spans the whole space, we say that $(G,\sigma,p)$ is affine-full.

Given a multigraph $G=(V,E)$ with an edge weight $\omega:E\rightarrow \mathbb{R}$, its weighted {\em Laplacian} $L_{G,\omega}$ is defined by 
\[
L_{G,\omega}:=\sum_{e=ij\in E} \omega(ij) F_{ij},
\]
where
\[
F_{ij}:=(\bm{e}_i-\bm{e}_j)(\bm{e}_i-\bm{e}_j)^\top
\]
and $\bm{e}_i$ is the $n$-dimensional vector whose $i$-th coordinate is one and the remaining entries are zeros.
%A weighted Laplacian of the simple complete graph on $V$ is simply called a {\em Laplacian matrix}.
Let $\mathcal{L}^n$ be the set of all Laplacian matrices over all multigraphs with $n$ vertices. Then $\mathcal{L}^n$ is a linear subspace of the space $\mathcal{S}^n$ of symmetric matrices of size $n$ given by
\[
\mathcal{L}^n={\rm span}\{F_{ij}: i,j\in V, i\neq j\}.
\]
%where $\{F_{ij}: i,j\in V, i\neq j\}$ forms a basis.
The set of positive semidefinite Laplacian matrices is denoted by $\mathcal{L}_+^n$.

Given a $d$-dimensional tensegrity $(G,\sigma,p)$, consider the following semidefinite programming problem $\text{P}_{(G,\sigma,p)}$:
\[
    \begin{array}{lllr}
            \text{($\text{P}_{(G,\sigma,p)}$)} &  \text{maximize}  & 0 \\
             & \text{subject to }  & \langle X, F_{ij} \rangle \leq \|p(i)-p(j)\|^2 & \quad (ij\in E \text{ with } \sigma(ij)=+1) \\
 %            &              & \langle X, F_{ij} \rangle   =  \|p(i)-p(j)\|^2 & (\sigma(ij)=0) \\
             &              & \langle X, F_{ij} \rangle \geq \|p(i)-p(j)\|^2 & \quad (ij\in E \text{ with } \sigma(ij)=-1) \\
             &              & X \succeq 0\\
             & & X\in \mathcal{L}^n,
        \end{array}
\]
where $\langle\cdot, \cdot\rangle$ denotes the Frobenius inner product and $X \succeq 0$ means that $X$ is positive semidefinite.
The feasible region of $\text{P}_{(G,\sigma,p)}$ describes deformations of $(G,\sigma,p)$ 
in the sense that $(G,\sigma,q)$ is a deformation of $(G,\sigma,p)$ if and only if $X=\Gram(q)$ is feasible in $\text{P}_{(G,\sigma,p)}$.
Hence, if $\text{P}_{(G,\sigma,p)}$ has a unique solution (which must be $X=\Gram(p)$), then $(G,\sigma,p)$ is globally rigid in $\mathbb{R}^d$.
A tensegrity $(G,\sigma,p)$ having the property that $\text{P}_{(G,\sigma,p)}$ has the unique solution is called \emph{universally rigid},
and the property has been extensively studied, see, e.g.,\cite{A11,GT}.

% The dual SDP of $\text{P}_{(G,\sigma,p)}$ is written as 
% \[
%     \begin{array}{llll}
%         \text{($\text{D}_{(G,\sigma,p)}$)} & \text{minimize}  &  {\displaystyle \sum_{ij \in E(G)} \omega(ij)\|p(i)-p(j)\|^2} \\
%          & \text{subject to }  & L_{G,\omega} \succeq 0 \\
%          &              & \sigma(ij)\omega(ij) \geq 0 \qquad (ij \in E).
%     \end{array}
% \]
% The objective function of $\text{D}_{(G,\sigma,p)}$ can be written as 
% \[
% \sum_{ij \in E} \omega(ij)\|p(i)-p(j)\|^2=\langle L_{G,\omega},\Gram(p) \rangle,
% \]
% Since $L_{G,\omega}$ and $\Gram(p)$ are both positive semidefinite, the objective function is nonnegative and attains $0$ if $\omega= 0$.
% Hence, if $\omega:E \rightarrow \mathbb{R}$ is a dual optimal solution, we have 
% $\langle L_{G,\omega},\Gram(p) \rangle=0$, or equivalently
%  $PL_{G,\omega}=0$.
%  Expanding the latter condition, we have the well-known {\em equilibrium condition}:
% \begin{align}\label{eq:equilibrium}
%     \sum_{j \in N_G(i)} \omega(ij) \left( p(j)-p(i) \right) = 0 \qquad (i\in V).
% \end{align}
% %which describes the equilibrium condition for $(G,\sigma,p)$. 

Connelly's super stability theorem given below guarantees the universal rigidity in terms of the existence of a certain equilibrium stress.
An edge weight $\omega:E \rightarrow \mathbb{R}$ is called an \emph{equilibrium stress} of a tensegrity $(G,\sigma,p)$ if 
\begin{align}\label{eq:equilibrium}
     \sum_{j \in N_G(i)} \omega(ij) \left( p(j)-p(i) \right) = 0 \qquad (i\in V)
     \end{align}
holds,
and it is called \emph{proper} if $\sigma(ij)\omega(ij) \geq 0$ for each $ij \in E(G)$. 
It is \emph{strictly proper} if $\sigma(ij)\omega(ij) > 0$ for each $ij \in E(G)$.
%We are now ready to state Connely's super stability theorem.

\begin{theorem}[Connelly's super stability theorem~\cite{C82}] \label{thm:connelly}
    A $d$-dimensional  tensegrity $(G,\sigma,p)$ is universally rigid (and hence globally rigid) if the following two conditions are satisfied.
    \begin{description}
        \item[(i) Stress Condition:] There exists a strictly proper equilibrium stress $\omega:E(G) \rightarrow \mathbb{R}$ such that $L_{G,\omega} \succeq 0$ and $\dim \ker L_{G,\omega}=d+1$.
        \item[(ii) Conic Condition:] The edge directions of $(G,p)$ do not lie on a conic at infinity, i.e., there exists no nonzero matrix $S \in \mathcal{S}^d$ such that
        \[
            \left( p(j)-p(i) \right)^\top S \left(p(j)-p(i)\right) = 0\quad \forall ij \in E(G).
        \]
    \end{description}
\end{theorem}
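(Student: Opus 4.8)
The plan is to prove the stronger conclusion that the semidefinite program $\mathrm{P}_{(G,\sigma,p)}$ has $\Gram(p)$ as its \emph{unique} feasible point; this is exactly universal rigidity, and global rigidity in $\mathbb{R}^d$ then follows because every deformation of $(G,\sigma,p)$ in $\mathbb{R}^d$ yields a feasible $X=\Gram(q)$, necessarily equal to $\Gram(p)$, and two $d$-dimensional configurations with the same Gram matrix are congruent. Throughout I write the configuration matrix of $p$ as $P\in\mathbb{R}^{d\times n}$ with $P\mathbf{1}_n=\bm{0}$ and set $L=L_{G,\omega}$.

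First I would unpack the stress condition. A direct computation shows that the equilibrium equations \eqref{eq:equilibrium} are equivalent to $LP^\top=\bm{0}$, i.e.\ every row of $P$ lies in $\ker L$. Combining this with the identity $L\mathbf{1}_n=\bm{0}$ and the hypothesis that $p$ is $d$-dimensional (so the rows of $P$ are linearly independent and orthogonal to $\mathbf{1}_n$, as $P\mathbf{1}_n=\bm{0}$), one gets $\mathrm{span}\{\mathbf{1}_n\}\oplus\mathrm{col}(P^\top)\subseteq\ker L$, a subspace of dimension $d+1$; since $\dim\ker L=d+1$ by assumption, this inclusion is an equality. In particular $\langle\Gram(p),L\rangle=\mathrm{tr}(PLP^\top)=0$.

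Now take any $X\in\mathcal{L}^n$ feasible for $\mathrm{P}_{(G,\sigma,p)}$. Since $X\in\mathcal{L}^n$ we have $X\mathbf{1}_n=\bm{0}$, and since $X\succeq 0$ and $L\succeq 0$ we have $\langle X,L\rangle\ge 0$. On the other hand, for a cable $\omega(ij)>0$ together with $\langle X,F_{ij}\rangle\le\|p(i)-p(j)\|^2$, and for a strut $\omega(ij)<0$ together with $\langle X,F_{ij}\rangle\ge\|p(i)-p(j)\|^2$, so in either case $\omega(ij)\bigl(\langle X,F_{ij}\rangle-\|p(i)-p(j)\|^2\bigr)\le 0$; summing over $E(G)$ gives $\langle X,L\rangle\le\langle\Gram(p),L\rangle=0$. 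Hence $\langle X,L\rangle=0$, which forces each summand to vanish, so $\langle X,F_{ij}\rangle=\|p(i)-p(j)\|^2$ for every edge (using $\omega(ij)\ne 0$); and $\langle X,L\rangle=0$ with $X,L\succeq 0$ forces $LX=\bm{0}$ (write $X=Q^\top Q$, so $\|L^{1/2}Q^\top\|_F^2=0$). Thus the columns of $X$ lie in $\ker L=\mathrm{span}\{\mathbf{1}_n\}\oplus\mathrm{col}(P^\top)$, and since $X\mathbf{1}_n=\bm{0}$ they lie in $\mathrm{col}(P^\top)$; as $P$ has full row rank $d$ this yields $X=P^\top M P$ for a unique symmetric $M\in\mathcal{S}^d$. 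Substituting, the edge equalities become $(p(i)-p(j))^\top(M-I_d)(p(i)-p(j))=0$ for all $ij\in E(G)$, so the conic condition applied with $S=M-I_d$ forces $M=I_d$, i.e.\ $X=P^\top P=\Gram(p)$, which completes the proof.

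I expect the main obstacle to be the linear-algebra bookkeeping in the third paragraph — passing from $\langle X,L\rangle=0$ to $LX=\bm{0}$ and then factoring $X$ cleanly through $P$ — together with the dimension count in the second paragraph, which is the single place where the two quantitative hypotheses $\dim\ker L_{G,\omega}=d+1$ and $\dim\mathrm{aff}(p)=d$ are used simultaneously. The rest is routine manipulation of the SDP inequalities.
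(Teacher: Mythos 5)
The paper states Theorem~\ref{thm:connelly} as an imported result, citing Connelly~\cite{C82}, and offers no proof of its own; there is therefore nothing in the paper itself to compare your argument against. Your proof is correct, and it is the standard modern proof of Connelly's super stability theorem via the semidefinite relaxation, in the spirit of the SDP exposition the paper points to in~\cite{GT}: $\Gram(p)$ is feasible with $\langle\Gram(p),L\rangle=0$ because $LP^{\top}=0$; the strict sign conditions on $\omega$ turn the edge inequalities into $\langle X,L\rangle\leq 0$, which with $X,L\succeq 0$ forces $\langle X,L\rangle=0$, hence $LX=0$ and tightness of every edge constraint; the corank hypothesis identifies $\ker L=\mathrm{span}\{\mathbf{1}_n\}\oplus\mathrm{col}(P^{\top})$ so that $X$ factors as $P^{\top}MP$; and the conic condition rules out any $M\neq I_d$. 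Two small points worth making explicit if you were to write this up: the identification $\mathcal{L}^n=\{X\in\mathcal{S}^n:X\mathbf{1}_n=0\}$ is what lets you use $X\mathbf{1}_n=0$ to kill the $\mathbf{1}_n$-component of $\ker L$, and the equivalence ``$X=\Gram(q)$ feasible $\iff q$ is a deformation'' implicitly takes $q$ centred at the origin (which is harmless, since translating $q$ is a congruence, but should be said when passing from uniqueness of the SDP solution to global rigidity).
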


A $d$-dimensional  tensegrity $(G,\sigma,p)$  is said to be {\em super stable} if it satisfies the stress condition and the conic condition  in Theorem~\ref{thm:connelly}.
For a tensegrity $(G,\sigma,p)$, the matrix $L_{G,\omega}$ defined by a (strictly proper) equilibrium stress $\omega$ is called an \emph{equilibrium stress matrix} (see~\cite{C82}).
%Also, for simplicity, throughout the paper
%the conditions (i)  and (ii) in Theorem~\ref{thm:connelly} are refereed to as
%the {\em stress condition} and the {\em conic condition}, respectively. 

\subsection{Super Stability Number}
Based on the concept of super stability, we now introduce a new graph parameter, the {\em super stability number}, as follows.
    For a connected multigraph $G$, the super stability number of $G$ is defined by
    \[
    \lambda(G):=\max\{d\in \mathbb{Z}: \text{$G$ has a $d$-dimensional super stable tensegrity realization}\}.
    \]
    For a multigraph $G$ in general, the super stability number is defined by
    \[
      \lambda(G):=\max\{\lambda(H): \text{$H$ is a connected component of $G$}\}.
\]

\medskip
\noindent
{\bf Example  1 (Tree).} 
Suppose $G$ is a single-edge graph, that is, $V(G)=\{u,v\}$ and $E(G)=\{uv\}$.
If a tensegrity realization $(G,\sigma,p)$ of $G$ is one-dimensional,
then there is no nonzero edge weight $\omega:E(G)\rightarrow \mathbb{R}$ satisfying the equilibrium condition (\ref{eq:equilibrium}).
Hence, $\lambda(G)<1$.
On the other hand, if $(G,\sigma,p)$ is a zero-dimensional tensegrity realization such that $\sigma(uv)=+$, 
then the edge weight $\omega$ with $\omega(uv)=1$ satisfies the equilibrium condition (\ref{eq:equilibrium}).
Then, $L_{G,\omega}=\begin{pmatrix} 1 & -1\\ -1 & 1\end{pmatrix}$, which is 
a rank-one positive semidefinite matrix. 
Thus,  $(G,\sigma,p)$ satisfies the stress condition for super stability.
In this case, the conic condition is null, and hence $(G,\sigma,p)$ is super stable,
and $\lambda(G)=0$.

This argument can be adapted to a general tree to show  that $\lambda(G)=0$ if $G$ is a tree (or forest).

\medskip
\noindent
{\bf Example 2 (Cycle).} Suppose $G$ is a cycle $C_n$ with $V(C_n)=\{v_1,\dots, v_n\}$
and $E(C_n)=\{v_iv_{i+1}:i=1,\ldots,n\}$, where $v_{n+1}=v_1$.
If a tensegrity realization $(C_n,\sigma,p)$ of $G$ is two-dimensional,
then there is a vertex $v_i$ such that 
$p(v_{i-1})-p(v_i)$ and $p(v_{i+1})-p(v_i)$ are not colinear,
or $p(v_{i-1})=p(v_i)$ and $p(v_{i+1})\neq p(v_i)$
(or $p(v_{i-1})\neq p(v_i)$ and $p(v_{i+1})= p(v_i)$).
In either case, the equilibrium condition cannot hold for any nonzero edge weight $\omega$.
So $\lambda(C_n)<2$ follows.

On the other hand, consider a one-dimensional tensegrity $(C_n,\sigma,p)$ such that $p(v_1), p(v_2), \ldots, p(v_n)$ are placed evenly spaced apart in this ordering on the line and $\sigma(v_1v_n)=-1$ and $\sigma(v_iv_{i+1})=+1$ for $i=1,\dots, n-1$.
Then, the edge weight $\omega$ given by $\omega(v_nv_1)=-1$ and $\omega(v_iv_{i+1})=n-1$ for $i=1,\dots, n-1$ satisfies the equilibrium condition and is strictly proper.
One can check that $L_{C_n,\omega}$ is positive semidefinite with $\dim \ker L_{C_n,\omega}=2$. Thus $\lambda(C_n)=1$ follows.

\medskip
\noindent
{\bf Example 3 (Complete graph).}
Consider $K_4$ drawn in the plane such that the four points form the vertices of the unit square.
If the outer cycles are cables and the two diagonal edges are struts, it admits a strictly proper equilibrium stress $\omega$: let $\omega(e)=1$ for each edge in the outer cycle and 
$\omega(e)=-1$ for each diagonal.
Then, $L_{K_4,\omega}=\begin{pmatrix} 1 & -1 & 1 & -1\end{pmatrix}^{\top}\begin{pmatrix} 1 & -1 & 1 & -1\end{pmatrix}$, which is a rank-one positive semidefinite matrix. 
Since it also satisfies the conic condition, $\lambda(K_4)\geq 2$ follows.

More generally, consider the complete graph $K_n$ with $V(K_n)=[n]$ and $p:[n]\rightarrow \mathbb{R}^{n-2}$ in general position.
Since there are $n$ points in $\mathbb{R}^{n-2}$, 
they are affinely dependent, i.e.,  
$\sum_{v\in [n]} a_v p(v)=0$ and $\sum_{v\in [n]}a_v=0$ hold for some $a_v \in \mathbb{R}$. By general position assumption, $a_v \neq 0$ for all $v \in [n]$.
Define $\omega:E(K_n)\rightarrow \mathbb{R}$ by $\omega(uv)=a_ua_v$
and $\sigma:E(K_n)\rightarrow \{-,+\}$ by $\sigma(uv)={\rm sign}(\omega(uv))$.
Then, $\omega$ is a strictly proper equilibrium stress of $(K_n, \sigma, p)$.
Moreover, $L_{K_n,\omega}=\begin{pmatrix} a_1 & \dots & a_n\end{pmatrix}^{\top}\begin{pmatrix} a_1 & \dots  & a_n \end{pmatrix}$, which is a rank-one positive semidefinite matrix. 
Since $(K_n, \sigma, p)$ also satisfies the conic condition, 
we have $\lambda(K_n)\geq n-2$.

We indeed have $\lambda(K_n)=n-2$ since no $(n-1)$-dimensional tensegrity realization of $K_n$ has a strictly proper (or even non-zero) equilibrium stress.

\medskip
\noindent
{\bf Example 4 (Complete multigraphs).} 
Recall that $K_n^=$ is the graph obtained from $K_n$ by duplicating each edge in parallel.
The resulting graph is called the complete multigraph with $n$ vertices.
Consider a tensegrity realization $(K_n^=,\sigma, p)$ of $K_n^=$ such $p:[n]\rightarrow \mathbb{R}^{n-1}$ is in general position and $\sigma(e_1)=-\sigma(e_2)$ for each parallel pair of edges  $e_1, e_2$. 
Then $(K_n^=,\sigma, p)$ satisfies the conic condition.
To see the stress condition,  consider an edge weight $\omega$ by setting $\omega(e_1)=-\omega(e_2)\neq 0$ for each parallel pair of edges  $e_1, e_2$.
Then, $\omega$ is a strictly proper equilibrium stress (since the net stress between two points is zero) and the resulting $L_{K_n^=,\omega}$ is the zero matrix (which has nullity $n$).
Hence, $(K_n^=,\sigma, p)$ is super stable and  $\lambda(K_n^=)=n-1$ follows.

\medskip

The discussion in Example 4 is worth emphasizing. 
Even in a strictly proper stress of a tensegrity realization,
the net stress between two vertices $u$ and $v$ can be zero if there are parallel edges between $u$ and $v$.
This is consistent with the conventional bar-joint case, where the value of a stress can be zero along a bar in Connelly's super stability theorem.
(Recall that parallel edges represent a bar constraint in our tensegrity model.)

\subsection{Spectral view of super stability}
We now explain how the super stability number can be understood from the view of spectral graph theory. The key is an analogy between the conic condition and the Strong Arnold Property from the Colin de Verdi\`{e}re number. Such an analogy (or a connection) has been already observed in e.g.,\cite{CGT,JT,LV} and we will exploit it in this paper.

For a symmetric $n\times n$ matrix $A$, a {\em kernel matrix} $P$ of $A$ is a matrix whose row vectors form an orthogonal basis of  $\ker A$.
Each kernel matrix $P$ defines a corresponding configuration of $n$ points by regarding each column as a point in a Euclidean space.
The corresponding point configuration is called a {\em kernel representation}.
If $A$ is a weighted Laplacian of $G$ with $\dim \ker A=d+1$, then we can take a kernel matrix $P$ such that the last row of a kernel matrix $P$ is the all-one vector.
Let $P'$ be the matrix obtained from $P$ by ignoring the last all-one row vector. $P'$ defines a point configuration in $\mathbb{R}^d$ by regarding each column as a point, which is called a {\em reduced kernel representation} with respect to $A$.

Reduced kernel representations enable us to reduce the problem of finding a super stable realization of a given graph $G$ to a problem of finding a Laplacian matrix supported by $G$ with a certain spectral property.
To see this, we first define the space of Laplacian matrices {\em supported by} $G$ by
\begin{align*}
{\cal L}^*(G):&=\left\{ L_{G,\omega}\mid  \omega:E(G)\rightarrow \mathbb{R}\setminus \{0\}\right\}.
\end{align*}
Note that 
\begin{align*}
{\cal L}^*(G)=\left\{L\in {\cal L}^n \mid \begin{array}{ll} 
L[i,j]= 0 & \text{if $G$ has no  edge between $i$ and $j$} \\
L[i,j]\neq 0 & \text{if $G$ has a single edge between $i$ and $j$} \\
L[i,j]\in  \mathbb{R} & \text{if $G$ has parallel edges between $i$ and $j$}
\end{array} \right\}.
\end{align*}
The (Euclidean) closure of ${\cal L}^*(G)$ is 
\begin{align*}
{\cal L}(G):=\left\{L\in {\cal L}^n \mid \begin{array}{ll} 
L[i,j]= 0 & \text{if $G$ has no  edge between $i$ and $j$} \\
L[i,j]\in  \mathbb{R} & \text{if $G$ has an  edge between $i$ and $j$}
\end{array} \right\},
\end{align*}
which is a linear subspace of ${\cal L}^n$.
Let ${\cal L}_{k}^n$ be the subset of ${\cal L}^n$ consisting of matrices of rank $k$ and ${\cal L}_{+,k}^n$ be the subset of ${\cal L}_k^n$ consisting of positive semidefinite matrices.

The following proposition is fundamental and has been already used in several places before.
\begin{proposition}\label{prop:condition1}
   Let  $G$ be a multigraph, $L\in {\cal L}^*(G)\cap {\cal L}_{+,n-(d+1)}^n$,
   and $p$ a reduced kernel representation of $L$.
   Then there is an edge weight $\omega:E(G)\rightarrow \mathbb{R}\setminus \{0\}$ such that 
   $L_{G,\omega}=L$.
   Moreover, letting $\sigma(e)={\rm sign}(\omega(e))$ for each edge $e$, $\omega$ is a strictly proper equilibrium stress of $(G,\sigma,p)$  such that $L_{G,\omega}\succeq 0$ and $\dim \ker L_{G,\omega}=d+1$.
\end{proposition}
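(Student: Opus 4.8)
\emph{Existence of $\omega$, and the easy parts of the ``moreover''.}
The first assertion is immediate from the definitions: since $\mathcal{L}^*(G)=\{L_{G,\omega}\mid \omega\colon E(G)\to\mathbb{R}\setminus\{0\}\}$, the hypothesis $L\in\mathcal{L}^*(G)$ already exhibits a nowhere-zero edge weight $\omega$ with $L_{G,\omega}=L$. (If one instead starts from the entrywise description of $\mathcal{L}^*(G)$, the only point to check is that a prescribed net weight on a parallel class of $k\ge 2$ edges splits into $k$ nonzero summands, which is elementary.) Put $\sigma(e):=\mathrm{sign}(\omega(e))$ for each $e\in E(G)$. Then $\sigma(e)\omega(e)=|\omega(e)|>0$, so $\omega$ is strictly proper, and because $L_{G,\omega}=L\in\mathcal{L}^n_{+,n-(d+1)}$ we immediately get $L_{G,\omega}\succeq 0$ and $\dim\ker L_{G,\omega}=n-(n-(d+1))=d+1$. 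Thus only the equilibrium property remains to be proved.

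\emph{Matrix form of the equilibrium condition.}
The key step is to record the following identity, valid for any configuration $q\colon V\to\mathbb{R}^d$ with matrix $Q\in\mathbb{R}^{d\times n}$ and any weight $\eta\colon E(G)\to\mathbb{R}$: expanding $L_{G,\eta}=\sum_{e=ij}\eta(ij)F_{ij}$ and using $QF_{ij}=(q(i)-q(j))(\bm{e}_i-\bm{e}_j)^\top$, one sees that the $i$-th column of $QL_{G,\eta}$ equals $\sum_{e=ij\in E_G(i)}\eta(ij)\bigl(q(i)-q(j)\bigr)$. Hence $QL_{G,\eta}=0$ if and only if $\eta$ satisfies the equilibrium condition~(\ref{eq:equilibrium}) for $(G,\sigma,q)$ (reading the sum there over the edges incident to $i$; regrouping parallel edges is harmless since only net weights enter each column).

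\emph{Conclusion.}
Apply this identity with $\eta=\omega$ and $q=p$. Let $P=\binom{P'}{\mathbf{1}_n^\top}$ be the kernel matrix underlying the reduced kernel representation $p$; such a $P$ is available precisely because $L$ is a Laplacian, so $\mathbf{1}_n\in\ker L$ and the remaining basis vectors of $\ker L$ may be chosen orthogonal to $\mathbf{1}_n$ and to one another. Every row of $P$, and in particular every row of $P'$, is (the transpose of) a vector in $\ker L=\ker L_{G,\omega}$, so $P'L_{G,\omega}=0$; by the previous paragraph this says exactly that $\omega$ is an equilibrium stress of $(G,\sigma,p)$. Together with strict properness, $L_{G,\omega}\succeq 0$, and $\dim\ker L_{G,\omega}=d+1$ established above, this is the claim.

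I do not expect a genuine obstacle here: the whole content is the bookkeeping identity ``$QL_{G,\eta}=0\iff$ equilibrium'' and the observation that a reduced kernel representation is, by construction, assembled from kernel vectors of $L$. The only places that need a little care are getting the sign and column conventions right in that identity, and noting that the normalization ``last row of the kernel matrix equals $\mathbf{1}_n$'' is legitimate exactly because every Laplacian annihilates $\mathbf{1}_n$.
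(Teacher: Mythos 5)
Your proposal is correct and follows essentially the same route as the paper: produce $\omega$ from the entrywise description of $\mathcal{L}^*(G)$ (splitting the net weight on a parallel class into nonzero summands), then observe that the rows of the (reduced) kernel matrix lie in $\ker L = \ker L_{G,\omega}$, so $P'L_{G,\omega}=0$, which is the matrix form of the equilibrium condition; positive semidefiniteness, nullity $d+1$, and strict properness are immediate. The paper states the step $PL_{G,\omega}=PL=0$ more tersely, whereas you spell out the column-by-column identity relating $QL_{G,\eta}$ to the equilibrium sums — a useful bit of bookkeeping, but not a different argument.
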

\begin{proof}
If $G$ has a single edge $e$ between $i$ and $j$, let $\omega(e)=-L[i,j]$.
If $G$ has parallel edges $e_1,\ldots,e_k$ ($k \geq 2$) between $i$ and $j$, then let $\omega(e_1), \ldots, \omega(e_k) \in \mathbb{R}\setminus \{0\}$ be arbitrary nonzero real numbers with $\sum_{l=1}^k \omega(e_l)=-L[i,j]$. 
Then $L_{G,\omega}=L$ holds.
Since $p$ is a reduced kernel representation of $L$, we have $PL_{G,\omega}=PL=0$, which is equivalent to the equilibrium condition (\ref{eq:equilibrium}).
Hence, $\omega$ is an equilibrium stress of $(G,\sigma,p)$.
The remaining properties directly follow from $L\in {\cal L}_{+,n-(d+1)}^n$.
\end{proof}

By Proposition~\ref{prop:condition1}, if there is $L \in {\cal L}^*(G)\cap {\cal L}_{+,n-(d+1)}^n$, we can construct a tensegrity $(G,\sigma,p)$ that satisfies the stress condition for super stability with a stress $\omega$ satisfying $L=L_{G,\omega}$. Thus, finding a $d$-dimensional realization satisfying the stress condition reduces to deciding whether
${\cal L}^*(G)\cap {\cal L}_{+,n-(d+1)}^n\neq \emptyset$.

The next question is whether the conic condition for super stability can also be interpreted in terms of Laplacian matrices. This is indeed possible by exploiting an analogy with the Strong Arnold Property. 
The Strong Arnold Property introduced by Colin de Verdi\`{e}re is a non-degeneracy condition for symmetric matrices (see Section~\ref{sec:characterization_lambda} for the definition), and it admits several equivalent characterizations~\cite{G,H96,L}.
By interpreting such characterizations in the space  of Laplacian matrices, we have the following. (See \cite{JT} for a proof.)

\begin{proposition}\label{prop:conic}
    Let $G$ be a graph with $n$ vertices. The following are equivalent for $L\in {\cal L}^*(G)\cap {\cal L}^n_{n-(d+1)}$:
    \begin{description}
    \item[(i)] ${\cal L}(G)$ and ${\cal L}^n_{n-(d+1)}$ intersect transversally\footnote{Two smooth manifolds  $N, N'$ in $\mathbb{R}^m$ {\em intersect transversally at $x\in N\cap N'$} if $T_x N+T_xN'=\mathbb{R}^m$, where $T_x N$ denotes the tangent space of $N$ at $x$.} at $L$ in the space ${\cal L}^n$ of Laplacian matrices.
    \item[(ii)] There is no nonzero $X\in {\cal L}^n$ satisfying $X L=0$ and $\langle X, (\bm{e}_i-\bm{e}_j)(\bm{e}_i-\bm{e}_j)^{\top}\rangle=0$ for every $ij\in E(G)$.
    \item[(iii)] Let $p:V(G)\rightarrow \mathbb{R}^d$ be a reduced kernel representation of $L$.
    Then the edge directions of $(G,p)$ do not lie on a conic at infinity.
    \end{description}
\end{proposition}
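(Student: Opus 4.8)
The plan is to establish the two equivalences (i)$\Leftrightarrow$(ii) and (ii)$\Leftrightarrow$(iii) separately. The second is a direct computation with a kernel matrix of $L$, while the first is the standard translation of a transversality statement into the non-existence of a common normal vector, once one identifies the relevant tangent and normal spaces.

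For (i)$\Leftrightarrow$(ii): fix a kernel matrix $P$ of $L$ whose rows form an orthogonal basis of $\ker L$ with last row $\mathbf{1}_n$. Since $\mathcal{L}(G)=\mathrm{span}\{F_{ij}: ij\in E(G)\}$ is a linear subspace of $\mathcal{L}^n$, we have $T_L\mathcal{L}(G)=\mathcal{L}(G)$, and I would take the tangent space to the manifold of Laplacians of rank $n-(d+1)$ to be the standard one, $T_L\mathcal{L}^n_{n-(d+1)}=\{Y\in\mathcal{L}^n : PYP^\top=0\}$ (the first-order condition for a Laplacian perturbation of $L$ to preserve the rank). The key linear-algebra claim is that the orthogonal complement of this space inside $\mathcal{L}^n$ is exactly $\{X\in\mathcal{L}^n : XL=0\}$: any such $X$ satisfies $\range X\subseteq\ker L$ and (by symmetry) $\ker X\supseteq\range L$, hence $X=P^\top\diag(S,0)P$ for a unique $S\in\mathcal{S}^d$ — the last row and column of the middle block vanish because $X\mathbf{1}_n=0$, $P\mathbf{1}_n=n\,\bm{e}_{d+1}$, and $P^\top$ has trivial kernel; such $X$ are orthogonal to every $Y$ with $PYP^\top=0$ via $\langle P^\top\diag(S,0)P,\,Y\rangle=\langle\diag(S,0),\,PYP^\top\rangle$, and a dimension count ($\binom{d+1}{2}$ on both sides, using $\dim\range(Y\mapsto PYP^\top\mid_{\mathcal{L}^n})\le\binom{d+1}{2}$) upgrades this inclusion to an equality. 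Then $\mathcal{L}(G)$ and $\mathcal{L}^n_{n-(d+1)}$ fail to meet transversally at $L$ precisely when some nonzero $X\in\mathcal{L}^n$ is orthogonal to both $\mathcal{L}(G)$ and $T_L\mathcal{L}^n_{n-(d+1)}$, i.e. when there is a nonzero $X\in\mathcal{L}^n$ with $XL=0$ and $\langle X,F_{ij}\rangle=0$ for all $ij\in E(G)$ — exactly the negation of (ii).

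For (ii)$\Leftrightarrow$(iii): I would reuse the description above. Every $X\in\mathcal{L}^n$ with $XL=0$ has the form $X=P^\top\diag(S,0)P$, and $X\mapsto S$ is a bijection onto $\mathcal{S}^d$ sending $0$ to $0$. Writing the $i$-th column of $P$ as $\tilde p(i)=(p(i),1)$, where $p$ is the reduced kernel representation of $L$, one gets $\tilde p(i)-\tilde p(j)=(p(i)-p(j),0)$ and hence
\[
\langle X, F_{ij}\rangle=(\tilde p(i)-\tilde p(j))^\top\diag(S,0)(\tilde p(i)-\tilde p(j))=(p(i)-p(j))^\top S\,(p(i)-p(j)).
\]
Therefore the existence of a nonzero $X\in\mathcal{L}^n$ with $XL=0$ and $\langle X,F_{ij}\rangle=0$ for all $ij\in E(G)$ is equivalent to the existence of a nonzero $S\in\mathcal{S}^d$ with $(p(i)-p(j))^\top S(p(i)-p(j))=0$ for all $ij\in E(G)$, i.e. to the edge directions of $(G,p)$ lying on a conic at infinity. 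Negating both sides yields (ii)$\Leftrightarrow$(iii).

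The main obstacle is the differential-geometric input used at the start of the (i)$\Leftrightarrow$(ii) step: one must justify that $\mathcal{L}^n_{n-(d+1)}$ really behaves like a smooth submanifold of $\mathcal{L}^n$ near $L$ with tangent space $\{Y\in\mathcal{L}^n: PYP^\top=0\}$. This is not immediate, because $\mathcal{L}^n$ is not transversal to the symmetric rank variety inside $\mathcal{S}^n$ — they always share the directions $\mathbf{1}_n\bm a^\top+\bm a\mathbf{1}_n^\top$ with $\bm a\in\ker L$ — so one cannot simply intersect tangent spaces. Instead one shows directly that a Laplacian path $L+tY+O(t^2)$ can be kept of rank $n-(d+1)$ if and only if $PYP^\top=0$; this is precisely the Laplacian incarnation of the Strong Arnold Property theory (cf.\ \cite{G,H96,L}) carried out in \cite{JT}. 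Once this manifold statement is granted, the remainder of the proof is the routine linear algebra sketched above.
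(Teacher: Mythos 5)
The paper itself gives no proof of Proposition~\ref{prop:conic} --- it simply cites \cite{JT} --- so there is no in-text argument to compare against. Your proposal is, however, correct, and it is the natural argument one would extract from the SAP literature that the paper points to.

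Your linear-algebra core is sound. For $X\in\mathcal{L}^n$ with $XL=0$, symmetry gives $\range X\subseteq\ker L$ and $\ker X\supseteq\range L=(\ker L)^\perp$, so $X=P^\top B P$ for some $B\in\mathcal{S}^{d+1}$ (unique because $P$ has full row rank). Then $X\mathbf 1_n=0$ together with $P\mathbf 1_n=n\,\bm e_{d+1}$ and the injectivity of $P^\top$ forces $B=\diag(S,0)$ with $S\in\mathcal{S}^d$, so the parameterization $S\mapsto P^\top\diag(S,0)P$ is a linear bijection onto $\{X\in\mathcal{L}^n:XL=0\}$. The adjoint identity $\langle P^\top\diag(S,0)P,\,Y\rangle=\langle\diag(S,0),\,PYP^\top\rangle$ then gives the inclusion into the normal space of $\mathcal{L}^n_{n-(d+1)}$, and since $PYP^\top$ always annihilates $\bm e_{d+1}$ for $Y\in\mathcal{L}^n$ the codimension of $\{Y\in\mathcal{L}^n:PYP^\top=0\}$ is at most $\binom{d+1}{2}$, so the dimension count closes the argument. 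The negation-of-(ii) $\Leftrightarrow$ negation-of-(iii) step via $P(\bm e_i-\bm e_j)=(p(i)-p(j),0)^\top$ is exactly right and requires only that the last row of $P$ be constant.

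You are also right that the one non-routine input is the manifold structure of $\mathcal{L}^n_{n-(d+1)}$ and the formula for its tangent space; this cannot be obtained by naively intersecting $\mathcal{L}^n$ with $T_L\mathcal{S}^n_{n-(d+1)}$ because that intersection is not transversal in $\mathcal{S}^n$. A clean way to discharge this, slightly more direct than deferring to \cite{JT}, is to observe that restriction to $\mathbf 1_n^\perp$ gives a linear isomorphism $\mathcal{L}^n\cong\mathcal{S}^{n-1}$ preserving rank, under which $\mathcal{L}^n_{n-(d+1)}$ becomes a standard symmetric rank variety; pulling back its tangent space and using $Y\mathbf 1_n=0$ recovers exactly $\{Y\in\mathcal{L}^n:PYP^\top=0\}$. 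With that remark your proof is self-contained.
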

The equivalence between Conditions (ii) and (iii) in Proposition~\ref{prop:conic} gives a translation of the conic condition in the language of  Laplacian matrices.
We say that $L$ satisfies \emph{the Euclidean SAP} if it satisfies Condition (ii) in Proposition~\ref{prop:conic}.
    
    Combining Proposition~\ref{prop:condition1} and Proposition~\ref{prop:conic}, we have the following characterization of super stability number. 
    \begin{proposition}\label{prop:algebraic}
        For a connected multigraph $G$,
    \begin{equation}\label{eq:def_L}
        \lambda(G)=\max\left\{\dim \ker L-1: \begin{array}{l} L\in {\cal L}^*(G) \\ L\succeq 0 \\ \text{$L$ satisfies the Euclidean SAP} \end{array}\right\}.
    \end{equation}
    \end{proposition}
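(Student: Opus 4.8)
The plan is to establish Proposition~\ref{prop:algebraic} by combining the two preceding propositions to show both inequalities between the two quantities. Write $\mu(G)$ for the right-hand side of~\eqref{eq:def_L}. The key observation is that the stress condition and the conic condition of Theorem~\ref{thm:connelly} have both been translated into conditions on a single Laplacian matrix $L$: the stress condition corresponds to $L\in{\cal L}^*(G)\cap{\cal L}^n_{+,n-(d+1)}$ (via Proposition~\ref{prop:condition1}) and the conic condition corresponds to the Euclidean SAP (via the equivalence of (ii) and (iii) in Proposition~\ref{prop:conic}). So the whole content of the proof is to check that the two translation steps are compatible, i.e.\ that they can be applied to the \emph{same} realization.

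First I would prove $\lambda(G)\ge\mu(G)$. Take $L\in{\cal L}^*(G)$ with $L\succeq0$ satisfying the Euclidean SAP, and set $d:=\dim\ker L-1$, so $L\in{\cal L}^*(G)\cap{\cal L}^n_{+,n-(d+1)}$. Let $p$ be a reduced kernel representation of $L$; this is a point configuration in $\mathbb{R}^d$. By Proposition~\ref{prop:condition1} there is an edge weight $\omega$ with $L_{G,\omega}=L$, and setting $\sigma(e)={\rm sign}(\omega(e))$ makes $\omega$ a strictly proper equilibrium stress of $(G,\sigma,p)$ with $L_{G,\omega}\succeq0$ and $\dim\ker L_{G,\omega}=d+1$; hence the stress condition holds. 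Since $L$ satisfies the Euclidean SAP, the equivalence of (ii) and (iii) in Proposition~\ref{prop:conic} (applied with this very $p$, which is by construction a reduced kernel representation of $L$) tells us that the edge directions of $(G,p)$ do not lie on a conic at infinity, so the conic condition holds too. Therefore $(G,\sigma,p)$ is a super stable tensegrity realization. One small point to address: a priori the affine span of $p$ might have dimension strictly less than $d$; but if $(G,\sigma,p)$ satisfies the conic and stress conditions and $p$ spans an affine subspace of dimension $d'<d$, the kernel of $L_{G,\omega}$ restricted to that subspace already forces $\dim\ker L_{G,\omega}\le d'+1<d+1$, a contradiction, so in fact $p$ is genuinely $d$-dimensional. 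Hence $\lambda(G)\ge d=\dim\ker L-1$, and taking the maximum gives $\lambda(G)\ge\mu(G)$.

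Next I would prove $\lambda(G)\ge d \Rightarrow \mu(G)\ge d$, i.e.\ the reverse inequality. Suppose $(G,\sigma,p)$ is a $d$-dimensional super stable tensegrity realization. By the stress condition there is a strictly proper equilibrium stress $\omega$ with $L:=L_{G,\omega}\succeq0$ and $\dim\ker L=d+1$; since $\omega$ is strictly proper it is in particular nowhere zero on $E(G)$, so $L\in{\cal L}^*(G)$, and $L\in{\cal L}^n_{+,n-(d+1)}$. The equilibrium condition~\eqref{eq:equilibrium} says exactly $PL=0$ where $P$ is the matrix whose columns are the $p(i)$ augmented by an all-one row; since $\dim\ker L=d+1$ and the rows of this augmented $P$ are linearly independent (here one uses that $p$ is $d$-dimensional and centered), these rows form a basis of $\ker L$, so after orthogonalizing we may regard $p$ as a reduced kernel representation of $L$. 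The conic condition for $(G,p)$ then says, via the equivalence of (ii) and (iii) in Proposition~\ref{prop:conic}, that $L$ satisfies the Euclidean SAP. Thus $L$ witnesses $\mu(G)\ge\dim\ker L-1=d$, and taking $d=\lambda(G)$ gives $\mu(G)\ge\lambda(G)$.

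Combining the two inequalities gives $\lambda(G)=\mu(G)$, which is~\eqref{eq:def_L}. The main obstacle — really the only nontrivial point — is the bookkeeping around reduced kernel representations: one must be careful that the point configuration $p$ produced or consumed at each step is literally a reduced kernel representation of the matrix $L$ in question (so that Proposition~\ref{prop:conic}(iii) can be invoked), and that $d$-dimensionality of the realization is consistent with $\dim\ker L=d+1$. Both directions hinge on the fact that a reduced kernel representation is determined up to an orthogonal change of basis of $\ker L$, and the conic condition is invariant under such a change (it only depends on the edge directions up to a linear isomorphism of $\mathbb{R}^d$); I would spell this invariance out once and then use it freely in both directions.
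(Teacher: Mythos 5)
Your proof is correct and follows exactly the route the paper intends: the paper states Proposition~\ref{prop:algebraic} as an immediate consequence of Propositions~\ref{prop:condition1} and~\ref{prop:conic}, and you have simply unpacked both inequalities and checked the two translation steps are applied to the same matrix $L$ and the same reduced kernel representation $p$. The one place where you over-engineer is the worry that the reduced kernel representation might span a proper affine subspace of $\mathbb{R}^d$: this cannot happen, since the rows of the kernel matrix $P$ form a basis of $\ker L$, hence $\rank P = d+1$; dropping the all-ones row leaves $P'$ with rank $d$, and $P'\mathbf{1}_n=0$ puts the origin in the affine hull, so the affine span equals the column span of $P'$, which is all of $\mathbb{R}^d$. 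Your rank-based contradiction argument is also valid, just more roundabout than this direct observation.
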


\subsection{Realizable dimension}
Belk and Connelly~\cite{BC} introduced the concept of realizable dimension of graphs for the case of bar-joint frameworks.
A natural tensegrity extension can be defined as follows.
A multigraph $G$ is {\em $d$-realizable} if any tensegrity realization of $G$ has a $d'$-dimensional deformation for some $d'\leq d$.
The {\em realizable dimension} ${\rm rd}(G)$ of $G$ is defined by
\[
{\rm rd}(G)=\min\{d\in \mathbb{Z}_{\geq 0}: \text{$G$ is $d$-realizable}\}.
\]

As explained above, a bar-joint framework of a simple graph $G$ can be considered as a tensegrity realization of $G^=$.
Thus,  the $d$-realizability of $G^=$ coincides with the bar-joint case of $d$-realizability due to Belk and Connelly~\cite{BC}.

Observe that, for a tensegrity realization  $(G,\sigma,p)$ of $G$, 
$(G,\sigma,p)$ has a $d'$-dimensional deformation with $d'\leq d$ if and only if 
$\text{P}_{(G,\sigma,p)}$ has a feasible solution $X$ with $\rank X \leq d$.
Hence, if $G$ has a $d$-dimensional universally rigid tensegrity realization  $(G,\sigma,p)$,
then $\text{P}_{(G,\sigma,p)}$ has a unique solution of rank $d$, implying ${\rm rd}(G)\geq d$.
Since any super stable tensegrity is universally rigid by Theorem~\ref{thm:connelly}, we have the following relation between $\rd$ and $\lambda$.
\begin{proposition}\label{prop:rd_lambda}
For any multigraph $G$, $\lambda(G)\leq \rd(G)$.
\end{proposition}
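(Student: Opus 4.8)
The plan is to derive this directly from Connelly's super stability theorem (Theorem~\ref{thm:connelly}) together with the observation, recorded just above, that the feasible solutions of $\text{P}_{(G,\sigma,p)}$ are exactly the Gram matrices of the deformations of $(G,\sigma,p)$. It suffices to show that if some connected component $H$ of $G$ has a $k$-dimensional super stable realization, then $\rd(G)\ge k$; taking $H$ and $k$ to attain $\lambda(G)$ then gives the claim, in view of the definitions of $\lambda$ and $\rd$.

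So I would fix a $k$-dimensional super stable realization $(H,\sigma_H,p_H)$ and extend it to a tensegrity realization $(G,\sigma,p)$ of $G$ by choosing arbitrary signs on the remaining edges and placing the vertices outside $V(H)$ arbitrarily (say, all at the origin). Let $(G,\sigma,q)$ be any deformation of $(G,\sigma,p)$. Keeping only the cable/strut inequalities indexed by $E(H)$ shows that the restriction $(H,\sigma_H,q|_{V(H)})$ is a deformation of $(H,\sigma_H,p_H)$; equivalently, $\Gram(q|_{V(H)})$ is feasible for $\text{P}_{(H,\sigma_H,p_H)}$. By Theorem~\ref{thm:connelly}, $(H,\sigma_H,p_H)$ is universally rigid, so this program has the unique solution $\Gram(p_H)$, which has rank $k$; hence $\Gram(q|_{V(H)})=\Gram(p_H)$, so $q|_{V(H)}$ is congruent to $p_H$ and its affine span is $k$-dimensional. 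Since the affine span of $q$ contains that of $q|_{V(H)}$, every deformation of $(G,\sigma,p)$ has affine dimension at least $k$. Consequently $(G,\sigma,p)$ has no deformation of dimension smaller than $k$, so $G$ is not $d$-realizable for any $d<k$, i.e.\ $\rd(G)\ge k$.

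This is essentially the argument sketched in the paragraph preceding the statement, now made uniform over the components of $G$. I do not expect a genuine obstacle here: the only points requiring a little care are the passage from universal rigidity of $(H,\sigma_H,p_H)$ to ``every deformation is congruent, hence full-dimensional'' — which uses that a $k$-dimensional point configuration has Gram matrix of rank exactly $k$ — and the bookkeeping needed to reduce the disconnected case to a single component, handled by the restriction-to-$H$ step above.
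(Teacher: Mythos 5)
Your proof is correct and follows essentially the same route as the paper: pass from super stability to universal rigidity via Theorem~\ref{thm:connelly}, then use that $\text{P}_{(G,\sigma,p)}$ has a unique solution of rank $d$ (equivalently, that every deformation has Gram matrix of the same rank) to rule out any lower-dimensional deformation. The only difference is that you spell out the reduction to a connected component achieving $\lambda(G)$, which the paper leaves implicit.
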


It is not difficult to check the minor monotonicity of realizable dimension.
(Note that a tensegrity may have an edge of length zero.)
\begin{proposition} \label{prop:rd}
    Let $G$ and $H$ be multigraphs. If $H$ is a minor of $G$, then $\rd(H) \leq \rd(G)$.
\end{proposition}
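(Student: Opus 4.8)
The plan is to reduce to the three elementary minor operations — edge deletion, vertex deletion, and edge contraction — and to show that none of them can increase $\rd$; the general statement then follows by induction on the number of operations taking $G$ to $H$, using transitivity. Concretely, for each operation it suffices to prove that if $G$ is $d$-realizable then so is the smaller multigraph. The common strategy is: start from an arbitrary tensegrity realization of the smaller graph, \emph{lift} it to a tensegrity realization of $G$ on the larger vertex/edge set, apply $d$-realizability of $G$ to obtain a $d'$-dimensional deformation there with $d'\le d$, and then \emph{push that deformation back down} to a deformation of the original realization whose affine span is no larger.

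For edge deletion ($H=G-e$ on the same vertex set) and vertex deletion ($H=G-v$), the lift is essentially free. Given a realization $(H,\sigma,p)$, put back the missing edge with an arbitrary sign (respectively, put back the missing vertex at an arbitrary point with arbitrary signs on its incident edges), obtaining a realization $(G,\sigma',p')$ whose restriction to $V(H)$ (respectively, whose set of points) is exactly $p$. A $d'$-dimensional deformation $(G,\sigma',q')$ with $d'\le d$, which exists since $G$ is $d$-realizable, restricts to a tensegrity on $H$ that still satisfies all of the (fewer) cable/strut inequalities of $(H,\sigma,p)$ and whose affine span is contained in that of $q'$; hence it is a deformation of $(H,\sigma,p)$ of dimension at most $d$, and $H$ is $d$-realizable.

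The only operation that needs an idea is edge contraction, $H=G/e$ with $e=uv$ merging $u,v$ into a vertex $w$, and this is exactly where the remark that a tensegrity may have a zero-length edge is used. Given $(H,\sigma,p)$, define $p'$ on $V(G)$ by $p'(u)=p'(v)=p(w)$ and $p'(x)=p(x)$ otherwise, so that $e$ (and every other $uv$-edge destroyed by the contraction) has length zero; set $\sigma'(e)=+$, i.e.\ declare $e$ a cable, and for every other edge $f$ of $G$ let $\sigma'(f)$ be the sign of the corresponding edge of $H$ under the natural bijection between $E(G)$ minus the $uv$-edges and $E(H)$. Take a $d'$-dimensional deformation $(G,\sigma',q')$ of $(G,\sigma',p')$ with $d'\le d$: the cable inequality on $e$ forces $\|q'(u)-q'(v)\|\le \|p'(u)-p'(v)\|=0$, hence $q'(u)=q'(v)$. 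Collapsing $q'$ to a configuration $q$ on $V(H)$ by $q(w)=q'(u)=q'(v)$ therefore alters none of the distances appearing in the constraints, so every cable/strut inequality of $H$ is inherited from $(G,\sigma',q')$; thus $(H,\sigma,q)$ is a deformation of $(H,\sigma,p)$ of dimension at most $d$, and $H$ is $d$-realizable.

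I expect the edge-contraction case to be the only place requiring care, and there the substantive point is purely the zero-length-cable observation above; the remaining work is the routine bookkeeping of matching edges of $G$ incident to $u$ or $v$ with the (possibly parallel) edges at $w$ in $H$, and checking that contracted $uv$-edges simply disappear. None of this involves a genuine obstacle, which is why the statement can fairly be said to be "not difficult".
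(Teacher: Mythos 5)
Your proof is correct, and it is exactly the argument the paper has in mind: the paper omits the proof, stating only that minor monotonicity "is not difficult to check" and adding the parenthetical hint "(Note that a tensegrity may have an edge of length zero)", which is precisely the observation you use to handle edge contraction. The only bookkeeping you leave implicit is that parallel edges to $e$ in $G$ can be assigned arbitrary signs in the lifted realization, since once $e$ is declared a cable the constraint $\|q'(u)-q'(v)\|\le 0$ already forces $q'(u)=q'(v)$, so any remaining $uv$-edge constraints are satisfied trivially; this is harmless and you essentially say so.
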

Note also that $\rd(G) \leq |V(G)|-1$ holds as the dimension of the affine space of any point configuration $p$ of $G$ is at most $|V(G)|-1$.

\section{Constructing Super Stable Tensegrities}\label{sec:construction}
% In this section we shall prove that super stable tensegrities can be constructed inductively by basic graph operations. The results in particular implies the minor monotonicity of super stability number,
% but our goal is to prove an even stronger statement for constructing {\em injective} realizations. 
% We say that a tensegrity $(G,\sigma,p)$ is {\em injective} if $p$ is injective.
% In the context of rigidity, we are often asked to consider only injective realizations. 

We say that a tensegrity $(G,\sigma,p)$ is {\em injective} if $p$ is injective.
In this section we investigate how to  construct injective super stable realizations from those of smaller multigraphs. 
%This section goes as follows.
In Subsection~\ref{subsec:removal} we first show that 
the injective realizability as a super stable tensegrity is preserved by edge addition. This is a non-trivial fact because a super stable realization requires a {\em strictly} proper stress according to our definition.
In Subsection~\ref{subsec:contraction} we study the corresponding question for vertex splitting.
This is the technically most difficult part in this paper.
The strategy follows an argument in \cite{JT}, which exploited a technique from \cite{H96} for the minor monotonicity of the Colin de Verdi\`{e}re number. 
In Subsection~\ref{subsec:minor}, we give a list of  corollaries including the minor monotonicity of $\lambda$.

Before moving to the main content, we collect some auxiliary propositions.
The first proposition is useful when checking the conic condition.
\begin{proposition}\label{prop:conic_generic}
    Suppose there is a $d$-dimensional framework $(G,p_0)$ satisfying the conic condition.
    Then 
    \[
    \{q\in (\mathbb{R}^d)^n: \text{$(G,q)$ satisfies the conic condition}\} 
    \]
    is an open dense subset of $(\mathbb{R}^d)^n$,
    where $(\mathbb{R}^d)^n$ denotes the set of all maps from $V(G)$ to $\mathbb{R}^d$
\end{proposition}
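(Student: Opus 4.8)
The plan is to show that the set in question is defined by the non-vanishing of a polynomial. Concretely, for each map $q \in (\mathbb{R}^d)^n$ and each edge $ij \in E(G)$, write $d_{ij}(q) := q(i) - q(j) \in \mathbb{R}^d$. A framework $(G,q)$ \emph{fails} the conic condition precisely when there is a nonzero $S \in \mathcal{S}^d$ with $d_{ij}(q)^\top S\, d_{ij}(q) = 0$ for all $ij \in E(G)$; that is, the linear map $S \mapsto \big(d_{ij}(q)^\top S\, d_{ij}(q)\big)_{ij \in E(G)}$ from $\mathcal{S}^d$ to $\mathbb{R}^{E(G)}$ has nontrivial kernel. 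Choosing coordinates, this map is given by a matrix $M(q)$ of size $|E(G)| \times \binom{d+1}{2}$ whose entries are quadratic polynomials in the coordinates of $q$. So $(G,q)$ satisfies the conic condition if and only if $\rank M(q) = \binom{d+1}{2}$, i.e. some $\binom{d+1}{2} \times \binom{d+1}{2}$ minor of $M(q)$ is nonzero.

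First I would fix the above encoding and record that each maximal minor $\Delta_I(q)$ of $M(q)$ is a polynomial in the coordinates of $q$. Hence the set $U := \{q : (G,q) \text{ satisfies the conic condition}\}$ equals $\{q : \Delta_I(q) \neq 0 \text{ for some } I\}$, which is the complement of the common zero set of finitely many polynomials, hence open. Next I would invoke the hypothesis: $(G,p_0)$ satisfies the conic condition, so $U \neq \emptyset$; in particular $\Delta_{I_0}(p_0) \neq 0$ for some $I_0$, so $\Delta_{I_0}$ is not the zero polynomial. Then I would use the standard fact that the complement of the zero set of a nonzero polynomial on $\mathbb{R}^m$ is dense (it is open and its complement has measure zero / empty interior). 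Since $U \supseteq \{q : \Delta_{I_0}(q) \neq 0\}$, which is already dense, $U$ is dense as well. This finishes both parts.

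The only mild subtlety — and the place I would be slightly careful — is the translation between the intrinsic statement ``there exists no nonzero $S \in \mathcal{S}^d$ with $\ldots$'' and the rank condition on $M(q)$: one must check that the entries of $M(q)$ really are polynomial (indeed quadratic) in $q$, which is immediate since $d_{ij}(q)^\top S\, d_{ij}(q) = \sum_{k,\ell} S_{k\ell}\, (q(i)-q(j))_k (q(i)-q(j))_\ell$ is linear in the entries of $S$ with coefficients that are quadratic monomials in the coordinates of $q$. There is no real obstacle here; the statement is essentially a soft algebraic-geometry observation once the conic condition is phrased as a rank condition, and the hypothesis is exactly what is needed to certify that the relevant polynomial is not identically zero.
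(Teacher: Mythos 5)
Your proof is correct and follows essentially the same approach as the paper: encode failure of the conic condition as a rank deficiency of the linear map $S \mapsto (d_{ij}(q)^\top S\, d_{ij}(q))_{ij\in E(G)}$, observe that the maximal minors are polynomials in $q$, and conclude openness and density from the hypothesis that at least one such minor is nonzero at $p_0$. In fact your write-up is slightly more careful than the paper's, which refers to ``$d\times d$ minors'' where the relevant size is $\binom{d+1}{2}\times\binom{d+1}{2}$ as you correctly state.
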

\begin{proof}
    Recall that $(G,p)$ does not satisfy the conic condition if and only if there is a nonzero $S\in \mathcal{S}^d$ such that 
    $(p(i)-p(j))^{\top}S(p(i)-p(j))=0$ for any edge $ij\in E(G)$. 
    The latter condition asks whether there is a nonzero solution in a linear system in the entries of $S$.
    This holds if and only if every $d\times d$ minor in the matrix representing the linear system is vanishing. 
    Since each $d\times d$ minor is written as a polynomial in the entries of $p$ and at least one $d\times d$ minor is not vanishing (as the conic condition holds for $(G,p)$), the statement follows. 
\end{proof}

For a square matrix $A=\begin{bmatrix} r & s^{\top} \\ s & T\end{bmatrix}$ with $r\neq 0$,
the {\em Schur complement} at the top left corner
is defined by $T-r^{-1}s s^{\top}$.
We denote it by $A/1$.
The following proposition is well-known~\cite{H68}.
\begin{proposition}\label{prop:schur rank}
Let $A = \begin{bmatrix} r & s^{\top} \\ s & T\end{bmatrix}$ be a symmetric matrix with $r\neq0$. 
Then $\dim \ker A=\dim \ker A/1$ holds.
Moreover, if $r >0$, then $A$ is positive semidefinite if and only if $A/1$ is positive semidefinite. 
\end{proposition}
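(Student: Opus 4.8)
The plan is to reduce both assertions to a single explicit congruence. Writing $A/1 = T - r^{-1} s s^{\top}$, I would first verify the identity
\[
A = L \begin{bmatrix} r & \bm{0}^{\top} \\ \bm{0} & A/1 \end{bmatrix} L^{\top}, \qquad L := \begin{bmatrix} 1 & \bm{0}^{\top} \\ r^{-1} s & I \end{bmatrix},
\]
which is a one-line matrix multiplication using $T = r^{-1} s s^{\top} + (A/1)$. This is the only computation in the argument. The matrix $L$ is lower unitriangular, hence invertible, so $x \mapsto L^{\top} x$ is a linear automorphism of $\mathbb{R}^n$.

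For the rank/nullity claim, invertibility of $L$ gives $\rank A = \rank \diag(r, A/1)$, and since $r \neq 0$ this equals $1 + \rank(A/1)$. As $A$ has size $n$ while $A/1$ has size $n-1$, passing to orthogonal complements yields $\dim \ker A = n - \rank A = (n-1) - \rank(A/1) = \dim \ker(A/1)$. For the definiteness claim, I would use that for every $x \in \mathbb{R}^n$ one has $x^{\top} A x = y^{\top} \diag(r, A/1)\, y$ with $y = L^{\top} x$, and that $x \mapsto y$ is a bijection; hence $A \succeq 0$ if and only if $\diag(r, A/1) \succeq 0$, i.e.\ if and only if $r \geq 0$ and $A/1 \succeq 0$. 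Under the hypothesis $r > 0$ the condition $r \geq 0$ is automatic, so $A \succeq 0$ if and only if $A/1 \succeq 0$.

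There is no genuine obstacle here: the statement is classical (it is Sylvester's law of inertia together with the rank additivity of Schur complements), so one could also simply invoke \cite{H68}. The only points requiring a little care are checking the congruence identity correctly and keeping track of the index shift between the $n \times n$ matrix $A$ and the $(n-1) \times (n-1)$ Schur complement $A/1$ when converting the rank equality into the nullity equality.
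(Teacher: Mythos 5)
Your proof is correct. The paper itself gives no proof of this proposition---it simply remarks that the result is well known and cites Haynsworth~\cite{H68}, which you also anticipate at the end of your write-up. The congruence factorization $A = L\,\diag(r, A/1)\,L^{\top}$ with $L$ lower unitriangular is the standard textbook argument, and both the rank/nullity equality (via $\rank A = 1 + \rank(A/1)$ and the size shift from $n$ to $n-1$) and the semidefiniteness equivalence (via $x^{\top}Ax = (L^{\top}x)^{\top}\diag(r,A/1)(L^{\top}x)$ and bijectivity of $x\mapsto L^{\top}x$) follow exactly as you describe.
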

The following proposition also holds. See \cite{JT} for a proof.
    \begin{proposition}\label{prop:schur}
    Let $A=\begin{bmatrix} r & s^{\top} \\ s & T\end{bmatrix}$ be an $n\times n$ symmetric matrix with corank $d$ and $r\neq 0$. 
    Let $p':\{2,\dots, n\}\rightarrow \mathbb{R}^d$ be a kernel representation of $A/1$, and define
    the extension $p:\{1,\dots, n\}\rightarrow \mathbb{R}^d$ of $p'$ by
    \[
    p(1)=-\frac{1}{r}s^{\top}P'^{\top}
    \]
    where $P'$ denotes the kernel matrix  of $A/1$ corresponding to $p'$.
    Then $p$ is a kernel representation of $A$.
    Moreover, if $(A/1){\bf 1}_{n-1}=0$ and $r+s^{\top}{\bf 1}_{n-1}=0$, then $A {\bf 1}_n=0$.
    \end{proposition}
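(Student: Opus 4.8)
The plan is a direct block-matrix computation organized around the Schur-complement identity $T = A/1 + r^{-1}ss^\top$. Write $P = \begin{bmatrix} v & P'\end{bmatrix}$ for the candidate $d\times n$ kernel matrix of $A$, where $v = -r^{-1}P's \in \mathbb{R}^d$ is the column representing the point $p(1)$ (indeed $v=(s^\top P'^\top)^\top$) and $P'$ supplies the remaining $n-1$ columns. By Proposition~\ref{prop:schur rank}, $\dim\ker A = \dim\ker(A/1) = d$; since the rows of $P'$ form a basis of $\ker(A/1)$, we have $\rank P' = d$. Hence it suffices to prove that the $d$ rows of $P$ satisfy $PA = 0$ and are linearly independent, after which they are automatically a basis of $\ker A$.

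First I would check $PA = 0$. Expanding the product,
\[
PA = \begin{bmatrix} v & P'\end{bmatrix}\begin{bmatrix} r & s^\top\\ s & T\end{bmatrix} = \begin{bmatrix} rv + P's & vs^\top + P'T\end{bmatrix}.
\]
The left block is $rv + P's = -P's + P's = 0$ by the definition of $v$. For the right block, substituting $T = A/1 + r^{-1}ss^\top$ gives $vs^\top + P'T = -r^{-1}P'ss^\top + P'(A/1) + r^{-1}P'ss^\top = P'(A/1)$, which vanishes since the rows of $P'$ lie in $\ker(A/1)$ and $A/1$ is symmetric. So every row of $P$ lies in $\ker A$. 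Linear independence is immediate because deleting the first column of $P$ recovers $P'$, which has rank $d$; thus $\rank P = d$, and combined with $\dim\ker A = d$ the rows of $P$ form a basis of $\ker A$, i.e. $p$ is a kernel representation of $A$. The one point that needs a word of care is the convention around kernel matrices: the definition asks for an \emph{orthogonal} basis, whereas this construction yields only a basis; this is immaterial for the way the proposition is used, and an orthonormal basis could be produced afterwards by a linear change of coordinates in $\mathbb{R}^d$.

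For the ``moreover'' clause, I would simply evaluate $A\mathbf{1}_n$ under the hypotheses $(A/1)\mathbf{1}_{n-1} = 0$ and $r + s^\top\mathbf{1}_{n-1} = 0$:
\[
A\mathbf{1}_n = \begin{bmatrix} r & s^\top\\ s & T\end{bmatrix}\begin{bmatrix} 1\\ \mathbf{1}_{n-1}\end{bmatrix} = \begin{bmatrix} r + s^\top\mathbf{1}_{n-1}\\ s + T\mathbf{1}_{n-1}\end{bmatrix}.
\]
The top entry is $0$ by the second hypothesis, and $T\mathbf{1}_{n-1} = (A/1)\mathbf{1}_{n-1} + r^{-1}s(s^\top\mathbf{1}_{n-1}) = 0 + r^{-1}s(-r) = -s$ using both hypotheses, so the bottom block $s + T\mathbf{1}_{n-1}$ vanishes as well and $A\mathbf{1}_n = 0$. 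I do not expect a genuine obstacle anywhere: the statement is pure linear-algebra bookkeeping, with the only mild subtlety being the orthogonality-convention remark noted above.
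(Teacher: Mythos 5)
Your proof is correct, and since the paper itself defers the proof of Proposition~\ref{prop:schur} to the reference~\cite{JT}, the only sensible comparison is with the obvious approach: a direct block-matrix computation using the Schur-complement identity $T = A/1 + r^{-1}ss^\top$ together with Proposition~\ref{prop:schur rank}, which is exactly what you do. The algebra checks out in all three places: $rv + P's = 0$ by the choice of $v = -r^{-1}P's$; $vs^\top + P'T = P'(A/1) = 0$ after substituting $T = A/1 + r^{-1}ss^\top$; and the rank argument (deleting the first column of $P$ recovers $P'$ of rank $d$, while $\dim\ker A = d$) correctly upgrades ``rows lie in $\ker A$'' to ``rows are a basis of $\ker A$.'' The ``moreover'' clause is likewise a clean two-line computation. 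Your aside about the orthogonality convention is a legitimate observation rather than a gap in your argument: under the paper's literal definition a kernel matrix must have orthogonal rows, and the constructed $P$ need not, even when $P'$ does; the paper is implicitly allowing any basis of $\ker A$ here (the downstream uses in Lemma~\ref{lem:edge_contraction} only need the affine span and the equilibrium identity $PL=0$, both of which are unaffected by a change of basis in $\mathbb{R}^d$). So the caveat you flag is a looseness in the statement/definition, not a defect in your proof.
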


We also need the following simple corollary of the inverse function theorem.
See, e.g.,~\cite{JT} for a proof.
    \begin{proposition}\label{prop:transversal}
    Let $X, Y, Z$ be smooth manifolds in $\mathbb{R}^m$.
    Suppose that $X$ is an embedded submanifold of $Y$ of codimension at least one, and $X$ and $Z$ intersect transversally at $p\in X\cap Z$.
    Then there is a smooth path $\gamma:[0,1]\rightarrow Y\cap Z$ 
    such that $\gamma(0)=p$ and $\gamma(t)\in Y\setminus X$ for $t\in (0,1]$.
    \end{proposition}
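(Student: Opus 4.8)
The plan is to deduce Proposition~\ref{prop:transversal} from two applications of the transversality theorem, each of which is a routine consequence of the inverse (equivalently, implicit) function theorem. All statements below are understood locally near $p$; since the desired path can be taken arbitrarily short and then reparametrised onto $[0,1]$, working on a small neighbourhood of $p$ loses nothing.

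First I would promote the transversality of $X$ and $Z$ to transversality of $Y$ and $Z$. Because $X$ is a submanifold of $Y$, we have $T_pX\subseteq T_pY$, hence $T_pY+T_pZ\supseteq T_pX+T_pZ=\mathbb{R}^m$; that is, $Y$ and $Z$ meet transversally at $p$. By the transversality theorem there is then a neighbourhood $W$ of $p$ in $\mathbb{R}^m$ such that $M:=Y\cap Z\cap W$ is a smooth embedded submanifold with $T_pM=T_pY\cap T_pZ$ and $\dim M=\dim Y+\dim Z-m$; shrinking $W$, the same reasoning gives that $N:=X\cap Z\cap W$ is a smooth embedded submanifold with $T_pN=T_pX\cap T_pZ$, and $\dim M-\dim N=\dim Y-\dim X\ge 1$ by the codimension hypothesis. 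Clearly $N\subseteq M$, and I would next check that $N$ is an embedded submanifold of $M$: either by the elementary fact that an embedded submanifold of $\mathbb{R}^m$ contained in another embedded submanifold $M$ is automatically an embedded submanifold of $M$, or more explicitly by observing that $X$ and $M$ meet transversally inside $Y$ at $p$ --- indeed, for $w\in T_pY$ write $w=u+v$ with $u\in T_pX$ and $v\in T_pZ$ (possible since $T_pX+T_pZ=\mathbb{R}^m$); then $v=w-u\in T_pY\cap T_pZ=T_pM$, so $w\in T_pX+T_pM$, giving $T_pX+T_pM=T_pY$. In either case $N$ is a submanifold of $M$ of codimension $\dim Y-\dim X\ge 1$.

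It then remains to move away from $N$ inside $M$. Since $T_pN\subsetneq T_pM$, I would pick $v\in T_pM\setminus T_pN$ and let $\gamma$ be any smooth curve in $M$ with $\gamma(0)=p$ and $\gamma'(0)=v$; its image lies in $M\subseteq Y\cap Z$. Passing to a submanifold chart of $M$ near $p$ in which $N$ becomes a coordinate subspace through the origin, the condition $v\notin T_pN$ means that some coordinate of $\gamma$ transverse to $N$ has nonzero derivative at $t=0$, hence is nonzero for all sufficiently small $t>0$; thus $\gamma(t)\notin N$ on some interval $(0,\varepsilon]$, and after rescaling the parameter we may assume $\gamma\colon[0,1]\to M\subseteq Y\cap Z$ with $\gamma(t)\notin N$ for all $t\in(0,1]$. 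Finally, since $\gamma(t)\in Z$, the relation $\gamma(t)\notin N=X\cap Z$ forces $\gamma(t)\notin X$, so $\gamma(t)\in Y\setminus X$ for $t\in(0,1]$, as required. I expect the only delicate point to be the bookkeeping that the local descriptions of $Z$ (as a zero set of a submersion), of $M$, and of $N\subseteq M$ can all be arranged on one common neighbourhood of $p$; the geometric substance is entirely contained in the two short transversality computations above.
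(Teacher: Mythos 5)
The paper itself does not give a proof of Proposition~\ref{prop:transversal}; it describes the result as a ``simple corollary of the inverse function theorem'' and refers to \cite{JT}. Your argument is correct and is exactly the kind of argument that phrase points to: you use the local transversality theorem (a standard consequence of the implicit/inverse function theorem) twice, first to exhibit $M=Y\cap Z$ and $N=X\cap Z$ as local submanifolds near $p$ with $T_pM=T_pY\cap T_pZ$ and $T_pN=T_pX\cap T_pZ$, then to observe that $N$ is a positive-codimension submanifold of $M$, so a curve in $M$ with initial velocity in $T_pM\setminus T_pN$ leaves $N$ immediately. The dimension count $\dim M-\dim N=\dim Y-\dim X\ge1$ and the final step $\gamma(t)\in Z\setminus N\Rightarrow\gamma(t)\notin X$ are both right, and the auxiliary verification that $X$ and $M$ are transversal inside $Y$ is a clean way to see that $N$ is indeed a submanifold of $M$. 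The only point you flag as delicate (arranging all local descriptions on one common neighbourhood of $p$) is indeed routine, since transversality at $p$ persists on an open neighbourhood. In short, this is a correct proof in the spirit the authors intended, even though they chose to outsource it.
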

    
\subsection{Edge removal} \label{subsec:removal}
\begin{lemma}\label{lem:edge_removal}
Let $G$ be a connected multigraph and $e$ be an edge of $G$.
Suppose $G-e$ has a $d$-dimensional super stable tensegrity realization.
Then $G$ has a $d$-dimensional super stable tensegrity realization such that the sign of $e$ is positive.

Moreover, if the realization of $G-e$ is injective, then the realization of $G$ can be injective.
\end{lemma}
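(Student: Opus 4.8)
The plan is to work with the algebraic reformulation of super stability: by Propositions~\ref{prop:condition1} and~\ref{prop:conic} it suffices to exhibit a matrix $L\in{\cal L}^*(G)\cap{\cal L}^n_{+,n-(d+1)}$ that satisfies the Euclidean SAP with respect to $G$ and whose $(u,v)$-entry is \emph{negative}, where $e=uv$; then the reduced kernel representation $p$ of $L$, together with the strictly proper equilibrium stress $\omega$ from Proposition~\ref{prop:condition1} and $\sigma={\rm sign}(\omega)$, is a $d$-dimensional super stable realization of $G$, and $\sigma(e)=+$ since $\omega(e)=-L[u,v]>0$. Let $L':=L_{G-e,\omega'}$ be the equilibrium stress matrix of the given super stable realization $(G-e,\sigma',p')$; thus $L'\in{\cal L}^*(G-e)\cap{\cal L}^n_{+,n-(d+1)}$, the reduced kernel representation of $L'$ is congruent to $p'$, and by the conic condition and Proposition~\ref{prop:conic}, $L'$ satisfies the Euclidean SAP with respect to $G-e$, i.e.\ ${\cal L}(G-e)$ and $Z:={\cal L}^n_{n-(d+1)}$ intersect transversally at $L'$ in ${\cal L}^n$. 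If $e$ is parallel to an existing edge of $G-e$, then ${\cal L}(G)={\cal L}(G-e)$, and I would simply take $L:=L'$: its $(u,v)$-entry is unconstrained in ${\cal L}^*(G)$ because $\{u,v\}$ now carries at least two edges, and when applying Proposition~\ref{prop:condition1} we are free to split the net weight $-L[u,v]$ over these parallel edges so that $\omega(e)>0$; the configuration is unchanged up to congruence, so the conic condition and, if assumed, injectivity are inherited.

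In the remaining case $G-e$ has no edge between $u$ and $v$, so ${\cal L}(G)={\cal L}(G-e)\oplus\mathbb{R}F_{uv}$ and ${\cal L}(G-e)$ is a codimension-one linear subspace of ${\cal L}(G)$. Since ${\cal L}(G)\supseteq{\cal L}(G-e)$, it also meets $Z$ transversally at $L'$. I would apply Proposition~\ref{prop:transversal} with $X={\cal L}(G-e)$, $Y={\cal L}(G)$, $Z$ as above, to obtain a smooth path $\gamma\colon[0,1]\to{\cal L}(G)\cap Z$ with $\gamma(0)=L'$ and $\gamma(t)\notin{\cal L}(G-e)$, equivalently $\gamma(t)[u,v]\neq0$, for $t\in(0,1]$. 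For $L:=\gamma(t)$ with $t$ small: $L$ has corank exactly $d+1$ since $L\in Z$, hence $L\succeq0$ by continuity of eigenvalues together with $L'\succeq0$; every single-edge entry of $L$ is nonzero, by continuity from $L'\in{\cal L}^*(G-e)$ for the old edges and by construction for $e$, so $L\in{\cal L}^*(G)$; and ${\cal L}(G)$ still meets $Z$ transversally at $L$ by openness of transversality, so $L$ satisfies the Euclidean SAP with respect to $G$.

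The point that needs extra care — and the main obstacle, exactly the subtlety the paper flags, namely that super stability forces a strictly proper (in particular nonzero) stress on the new edge — is to pin down the sign, i.e.\ to choose $L$ with $L[u,v]<0$ rather than merely $L[u,v]\neq0$. Here I would argue that the restriction of the linear functional $L\mapsto L[u,v]$ to the manifold ${\cal L}(G)\cap Z$ has nonzero differential at $L'$: if it vanished, then $T_{L'}({\cal L}(G)\cap Z)\subseteq T_{L'}{\cal L}(G-e)$, and a dimension count using the transversality of ${\cal L}(G-e)$ and $Z$ together with $\dim{\cal L}(G)-\dim{\cal L}(G-e)=1$ would force $T_{L'}{\cal L}(G-e)+T_{L'}Z$ to be a proper subspace of ${\cal L}^n$, contradicting that transversality. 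Hence $L[u,v]$ takes negative values arbitrarily close to $L'$ along ${\cal L}(G)\cap Z$, so we may choose the direction of $\gamma$ (equivalently, a point $L$) with $L[u,v]<0$; Propositions~\ref{prop:condition1} and~\ref{prop:conic} then yield the desired super stable realization of $G$ with $\sigma(e)=+$.

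Finally, for the injective addendum, since $\ker L$ has constant dimension $d+1$ and always contains $\mathbf{1}_n$, the reduced kernel representation of $L$ can be chosen to depend continuously on $L$ near $L'$ and to converge, as $L\to L'$, to a reduced kernel representation of $L'$, which is congruent to $p'$ and hence injective if the realization of $G-e$ is; since injectivity (the open conditions $p(i)\neq p(j)$) is preserved under small perturbations, the realization of $G$ obtained above is injective for $L$ close enough to $L'$.
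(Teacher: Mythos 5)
Your argument is correct and follows essentially the same route as the paper: perturb the certifying Laplacian of $G-e$ along the transversal intersection ${\cal L}(G)\cap{\cal L}^n_{n-(d+1)}$ via Proposition~\ref{prop:transversal}, handle the parallel-edge case by redistributing the net stress, and conclude by continuity for positive semidefiniteness, the Euclidean SAP, the conic condition (Proposition~\ref{prop:conic_generic}), and injectivity. If anything, your explicit transversality/dimension-count argument showing that the entry $L[u,v]$ can be driven negative makes the requirement $\sigma(e)=+$ more transparent than the paper's own write-up, which asserts it but does not dwell on the point.
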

   \begin{proof}
   Let $e=uv$, $(G-e,\sigma',p')$ be a $d$-dimensional super stable tensegrity realization of $G-e$,
   and $L_{G-e}$ be a weighted Laplacian matrix of $G-e$ that certifies the super stability of $(G-e,\sigma',p')$.
   We may suppose that $p'$ is a reduced kernel representation of $L_{G-e}$.

   If $G-e$ contains an edge $e'$ parallel to $e$, then $L_{G-e}$ also serves as a certificate of super stability of $(G,\sigma,p')$ 
   by extending $\sigma'$ to $\sigma$ by  $\sigma(e')=-\sigma'(e)$.
   Hence we we assume that $G-e$ has no edge parallel to $e$.
   
        By Proposition~\ref{prop:conic},  ${\cal L}(G-e)$ and ${\cal L}^n_{n-(d+1)}$ intersect transversally at $L_{G-e}$ in ${\cal L}^n$.
        Note that ${\cal L}(G)$ can be written by  
        \[
        {\cal L}(G):=\left\{ L+\varepsilon (\bm{e}_u-\bm{e}_v)(\bm{e}_u-\bm{e}_v)^{\top}: L\in {\cal L}(G-e), \varepsilon\in \mathbb{R}\right\}.
        \]
        Since $G-e$ has no edge parallel to $e$,
        ${\cal L}(G-e)$ is a linear subspace of ${\cal L}(G)$ of codimension one.
        Hence, by Proposition~\ref{prop:transversal}, there is a positive number $\overline{\varepsilon}$ and a smooth function $\gamma:[-\overline{\varepsilon}, \overline{\varepsilon}] \rightarrow \mathcal{L}^n$ such that 
        $\gamma(0)=L_{G-e}$ and $\gamma(\varepsilon)\in ({\cal L}(G)\setminus {\cal L}(G-e)) \cap {\cal L}_{n-(d+1)}^n$ for $\varepsilon \in [-\overline{\varepsilon}, \overline{\varepsilon}]\setminus \{0\}$.
        Let $L_{\varepsilon}:=\gamma(\varepsilon)$.
       Then 
       by $L_{\varepsilon}\in {\cal L}(G)\setminus {\cal L}(G-e)$ 
       we have $L_{\varepsilon}\in {\cal L}^*(G)\cap {\cal L}_{n-(d+1)}^n$ and $L_{\varepsilon}\succeq 0$ if $\varepsilon$ is a sufficiently small nonzero number.
       
       Since the entries of $L_{\varepsilon}$ continuously change with respect to $\varepsilon$, we can take a smooth function 
        $[-\overline{\varepsilon}, \overline{\varepsilon}]\ni \varepsilon \mapsto p_{\varepsilon}\in (\mathbb{R}^d)^n$ such that 
        $p_0=p'$ and $p_{\varepsilon}$ is a reduced kernel representation with respect to $L_{\varepsilon}$.
        Then $L_{\varepsilon}$ certifies that $(G,\sigma, p_{\varepsilon})$ satisfies the stress condition for super stability (after extending $\sigma'$ to $\sigma$ such that  $\sigma(e)={\rm sign}\  L_{\varepsilon}[u,v]$).
        Moreover, if $\varepsilon$ is a small positive number, $(G,\sigma, p_{\varepsilon})$ also satisfy the conic condition 
        by Proposition~\ref{prop:conic_generic} and $\sigma(e)>0$. 
        Thus,  $(G,\sigma, p_{\varepsilon})$ is a super stable tensegrity with $\sigma(e)>0$.
        
        Since the coordinates of $p_{\varepsilon}$ continuously change in $\varepsilon$,
        $p_{\varepsilon}$ is injective if  $p'=p_0$ is injective and $\varepsilon$ is small.
        \end{proof}

For a multigraph $G$, an {\em (open) ear} is a path $P$ of length at least one such that the endvertices of $P$ are distinct vertices of $G$ and the internal vertices are disjoint from $G$. 
Note that a single-edge path between two vertices of $G$ is also an ear. 
Ears appear when constructing 2-connected multigraphs: it is an elementary fact from graph theory  that any 2-connected multigraph can be constructed from any 2-connected subgraph by attaching ears sequentially keeping 2-connectivity. 

The following lemma gives a slightly stronger statement than Lemma~\ref{lem:edge_removal}.

\begin{lemma}\label{lem:ear}
Let $G, H$ be connected multigraphs and suppose that $G$ is obtained from $H$ by attaching an ear $P$.
If $H$ has a $d$-dimensional super stable (injective) tensegrity realization, then so does $G$.
\end{lemma}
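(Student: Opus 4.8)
The plan is to reduce Lemma~\ref{lem:ear} to Lemma~\ref{lem:edge_removal} by induction on the length of the ear $P$. If $P$ is a single edge, then $G = H + e$ and the statement is exactly Lemma~\ref{lem:edge_removal}. So suppose $P = u, w_1, w_2, \dots, w_k, v$ has length at least two, with $w_1, \dots, w_k$ the new internal vertices. The natural idea is to realize the ear as a subdivided segment: I would first build an intermediate multigraph $H'$ obtained from $H$ by attaching the shorter ear $u, w_2, \dots, w_k, v$ (i.e., attach an ear with one fewer internal vertex), and then recover $G$ from $H'$ by a single vertex split that separates $w_1$ from $u$ (creating the edge $uw_1$ and moving the edge to $w_2$). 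However, vertex splitting is only treated later (Subsection~\ref{subsec:contraction}), so to keep this lemma self-contained and elementary, I would instead argue directly with a geometric placement.

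The direct approach: take a $d$-dimensional super stable (injective) realization $(H, \sigma_H, p_H)$ with certifying Laplacian $L_H \succeq 0$, $\dim\ker L_H = d+1$, and $p_H$ a reduced kernel representation. I place the internal ear vertices $w_1, \dots, w_k$ collinearly, evenly spaced strictly between $p_H(u)$ and $p_H(v)$ on the segment joining them (choosing the direction $p_H(v) - p_H(u) \neq 0$; if $p_H$ is injective this is automatic, and in the non-injective case one can perturb or pick any vertex pair at nonzero distance — here I would want $u \neq v$ as vertices and $p_H(u) \neq p_H(v)$, which can be arranged since $H$ is connected and, by Lemma~\ref{lem:edge_removal}'s proof technique, super stable realizations come with enough freedom). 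On the ear path $u w_1 \cdots w_k v$ I assign stresses exactly as in Example~2 (the cycle example): positive stress on the "short" edges and a single negative edge, scaled so that the net force at each internal vertex $w_i$ vanishes, and the net force contributed to $u$ and $v$ along the ear is zero. Concretely, one routes an equilibrium stress supported on the ear alone whose Laplacian $L_P$ is positive semidefinite with $\dim\ker L_P = n' - 1$ where $n' = k+2$ is the number of vertices touched by the ear — this is precisely the $C_n$ computation, because a path with its two endpoints identified in the force balance behaves like the cycle stress.

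The Laplacian of $G$ is then $L_G = \tilde L_H + L_P$, where $\tilde L_H$ is $L_H$ padded with zero rows/columns for $w_1, \dots, w_k$, and $L_P$ is supported on $\{u, w_1, \dots, w_k, v\}$. Positive semidefiniteness is immediate since both summands are PSD. For the kernel dimension I would use Proposition~\ref{prop:schur rank} (Schur complement), eliminating $w_1, \dots, w_k$ one at a time: each $w_i$ has a nonzero diagonal entry in $L_G$, and taking Schur complements successively collapses the ear back down, the end result being $L_H$ with a rank-one PSD correction on the pair $\{u,v\}$ of the form $c(\bm e_u - \bm e_v)(\bm e_u - \bm e_v)^\top$; choosing the ear stresses so this correction has the right sign (nonnegative $c$) keeps PSD, and since $p_H(u) \neq p_H(v)$ one checks $\dim\ker$ is unchanged, i.e., still $d+1$. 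The conic condition transfers because the new edge directions along the ear are all parallel to the fixed direction $p_H(v)-p_H(u)$, which already appears among the directions of $(H,p_H)$ (or can be arranged), so any $S \in \mathcal S^d$ annihilating all edge directions of $(G, p_G)$ in particular annihilates those of $(H, p_H)$, forcing $S = 0$. Finally, strict properness on the ear is built in by construction, and injectivity is preserved because the $w_i$ are placed at distinct interior points of a segment whose endpoints are distinct from each other and, being interior, from all other vertices after a small generic perturbation if needed.

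The main obstacle I anticipate is the bookkeeping of the Schur-complement reduction together with controlling the sign of the resulting rank-one correction on $\{u,v\}$: I need the eliminated ear to contribute a \emph{positive} semidefinite (not negative) perturbation so that $L_H + c(\bm e_u-\bm e_v)(\bm e_u-\bm e_v)^\top \succeq 0$ with unchanged corank, and verifying this amounts to checking the effective "spring constant" of the series connection of the ear edges has the correct sign — exactly the kind of computation done in Example~2, but now I must confirm it plays well with the rest of $L_H$. A cleaner alternative, which I would actually prefer to write, is to sidestep this entirely: attach the ear one edge at a time via $k+1$ applications of Lemma~\ref{lem:edge_removal} to a sequence of multigraphs $H = H_0 \subset H_1 \subset \cdots \subset H_{k+1} = G$ where $H_{j+1}$ adds one new degree-one pendant vertex and edge to $H_j$ — but pendant additions give forests, not ears, so this does not literally work; the correct intermediate sequence subdivides, and subdivision is not edge addition. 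Hence the vertex-split route (deferred to Subsection~\ref{subsec:contraction}) or the explicit Schur-complement argument above is genuinely needed, and I would present the explicit construction, with the sign check isolated as the one nontrivial computation.
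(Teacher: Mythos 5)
Your explicit Schur-complement construction does not work, and the sticking point is not the sign of the effective spring constant $c$ but the corank. With $L_G=\tilde L_H+L_P$ you correctly reduce via Schur complements to $L_H+cF_{uv}$ for some $c>0$, but this matrix has corank $d$, not $d+1$, whenever $p_H(u)\neq p_H(v)$: since $p_H$ is a reduced kernel representation of $L_H$, the space $\ker L_H$ is spanned by the coordinate rows of $p_H$ together with $\mathbf 1$, so $p_H(u)\neq p_H(v)$ produces an $x\in\ker L_H$ with $x_u\neq x_v$, hence $F_{uv}x\neq 0$ and $x\notin\ker(L_H+cF_{uv})$. Thus the corank drops by one and $L_G$ would certify only a $(d-1)$-dimensional super stable realization. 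No choice of $c$ rescues this: $c>0$ drops the corank, $c<0$ breaks positive semidefiniteness, and $c=0$ gives a zero ear stress, which is not strictly proper. (Your appeal to Example~2 is also off: the cycle has a negative long edge $v_nv_1$ balancing the positive short edges, while the ear is an open path with no such edge, so an all-positive ear stress cannot have zero net force at $u$ and $v$.) This corank obstruction is precisely what Lemma~\ref{lem:edge_removal} is built to overcome — it does not form $L_H+cF_{uv}$ but moves inside the fixed-rank manifold ${\cal L}^n_{n-(d+1)}$ via Proposition~\ref{prop:transversal}, so the resulting $L_\varepsilon$ is a genuine perturbation of $L_H$, not $L_H$ plus a rank-one term.

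The paper's proof is much shorter and sidesteps all of this: attaching an ear is the same as adding a single edge $uv$ and then repeatedly subdividing it. It first invokes Lemma~\ref{lem:edge_removal} to add $uv$ as a \emph{cable} (that lemma explicitly arranges the new edge to have positive sign), and then uses the well-known fact that subdividing a cable by inserting a collinear midpoint preserves super stability (the Schur complement at the new vertex recovers the unsubdivided Laplacian, so rank, PSD, strict properness, the conic condition, and injectivity are all unaffected). You dismissed this route on the grounds that "subdivision is not edge addition" and therefore requires the vertex-splitting machinery of Subsection~\ref{subsec:contraction}, but cable subdivision is a standalone elementary operation and needs none of that. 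The correct decomposition is ear attachment $=$ edge addition $+$ cable subdivision, with only the first step invoking Lemma~\ref{lem:edge_removal}.
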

\begin{proof}
If $P$ has length one, then attaching $P$ is equivalent to adding an edge, so Lemma~\ref{lem:edge_removal} can be applied.
% We impose an extra property in the resulting tensegrity realization such that the new edge is realized as a cable.
% This is possible because
% in the proof of Lemma~\ref{lem:edge_removal} the sign of the new edge in $(G,\sigma,p_{\varepsilon})$ is controllable and can be forced to be positive (by taking a small positive $\varepsilon$).

Suppose the length of $P$ is more than one.
Then attaching the ear $P$ is equivalent to adding a new edge and subdividing the new edge by new vertices.
It is well-known that, in a super stable tensegrity, 
the subdivision of a cable by inserting a new mid-point preserves super stability.
Hence, a super stable (injective) realization of $G$ can be obtained from that of $H$ by first adding an edge as a cable, which is possible by Lemma~\ref{lem:edge_removal}, and then subdividing the new cable.  
\end{proof}
% \begin{lemma}\label{lem:leaf_removal}
% Let $G$ be a connected multigraph and $v$ be a vertex of degree one in $G$. 
% Suppose $G-v$ has a $d$-dimensional super stable tensegrity realization.
% Then $G$ has a $d$-dimensional super stable tensegrity realization.
% \end{lemma}
%  \begin{proof}
%         Let $e=uv$ be the edge incident to $v$ in $G$.
%         Let $(G-v,\sigma',p')$ be a $d$-dimensional super stable tensegrity realization,
%   and let $L_{G-v}$ be a weighted Laplacian matrix of $G-v$ that certificates the super stability of $(G-v,\sigma',p')$.
%   We may suppose that $p'$ is a reduced kernel representation of $L_{G-v}$.
%         We define $L\in {\cal L}^*(G)$ by 
%         \[
%         L=\begin{pmatrix} 
%         0 & 0 \\
%         0 & L_{G-v} 
%         \end{pmatrix}
%         +(e_v-e_u)(e_v-e_u)^{\top},
%         \] 
%         where we assume that the first coordinate is associated with the vertex $v$. Clearly $L\succeq 0$.
%         Let $p:V(G)\rightarrow \mathbb{R}^d$ be the extension of $p'$ such that $p(u)=p(v)$.
%         Then, $p$ is a reduced kernel representation with respect to $L$. 
%         Since $(G-v,p')$ satisfies the conic condition, $(G,p)$ satisfies the conic condition.
%         Thus, $(G,\sigma, p)$ is super stable.
%         \end{proof}

%The resulting tensegrity in Lemma~\ref{lem:leaf_removal} is not injective.

\subsection{Edge contraction} \label{subsec:contraction}
The next goal is to derive the counterpart of Lemma~\ref{lem:edge_removal} for edge contraction.
However, for edge contraction, the injectivity may not be guaranteed in general.
To see this, consider for example a multigraph $C_4^=-e$ obtained from $C_4^=$ by removing an edge $e$, 
and let $e'$ be the edge parallel to $e$ in $C_4^=$.
It is not difficult to check that any injective two-dimensional realization of $C_4^=-e$ is not super stable 
whereas $(C_4^=-e)/e'$ admits an injective two-dimensional super stable realization  since $(C_4^=-e)/e'$ is isomorphic to $K_3^=$.

It seems difficult to characterize the property of having an injective super stable realization completely, so our focus here is to give a sufficient condition.
A key property is the following new concept of stress splittability,
which is inspired by stress non-degeneracy in \cite{JT}.

Let $(G,\sigma,p)$ be a tensegrity.
An equilibrium stress $\omega:E(G)\rightarrow \mathbb{R}$ is said to be {\em splittable} at  a vertex $v\in V(G)$ if there is a proper nonempty subset $F\subsetneq E_{G}(v)$  such that 
\begin{equation}\label{eq:split}
\sum_{f=uv\in F} \omega(f)(p(u)-p(v))=0.
\end{equation}
An equilibrium stress $\omega$ is said to be {\em non-splittable} if it is not splittable at any $v\in V(G)$.
We first remark the following easy observation.
\begin{lemma}\label{lem:non-splittable}
Suppose a $d$-dimensional super stable tensegrity $(G,\sigma, p)$ has a non-splittable equilibrium stress.
Then it has an equilibrium stress $\omega$ such that 
$\omega$ is non-splittable
and $L_{G,\omega}$ is positive semidefinite with nullity $d+1$.
\end{lemma}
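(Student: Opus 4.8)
The plan is as follows. We are given a $d$-dimensional super stable tensegrity $(G,\sigma,p)$ that has \emph{some} non-splittable equilibrium stress, call it $\omega_0$; we also have, by super stability (the stress condition in Theorem~\ref{thm:connelly}), a strictly proper equilibrium stress $\omega^*$ with $L_{G,\omega^*}\succeq 0$ and $\dim\ker L_{G,\omega^*}=d+1$. The issue is that $\omega_0$ need not have a positive semidefinite Laplacian of the right nullity, while $\omega^*$ need not be non-splittable. The natural idea is to perturb: consider the one-parameter family $\omega_t := \omega^* + t\,\omega_0$ (or, symmetrically, $\omega_t := (1-t)\omega^* + t\,\omega_0$) for small $t>0$ and argue that for all sufficiently small $t>0$ the stress $\omega_t$ is simultaneously (a) an equilibrium stress, (b) non-splittable, and (c) has $L_{G,\omega_t}\succeq 0$ with $\dim\ker L_{G,\omega_t}=d+1$.

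Property (a) is immediate: the equilibrium condition \eqref{eq:equilibrium} is linear in $\omega$, so any linear combination of equilibrium stresses is again one. For property (c): $L_{G,\omega_t}=L_{G,\omega^*}+tL_{G,\omega_0}$, and since $L_{G,\omega^*}\succeq 0$ with $\dim\ker L_{G,\omega^*}=d+1$, I would argue that the kernel of the perturbed matrix cannot grow and the matrix stays positive semidefinite for small $t$, \emph{provided} $\ker L_{G,\omega_0}$ is not too small. The cleanest route is to observe that the reduced kernel representation $p$ lies in $\ker L_{G,\omega^*}$ \emph{and} in $\ker L_{G,\omega_0}$ (both are equilibrium stresses of the same configuration $p$, together with the all-ones vector), so the $(d+1)$-dimensional space spanned by the rows of the kernel matrix of $L_{G,\omega^*}$ is contained in $\ker L_{G,\omega_t}$ for every $t$; hence $\dim\ker L_{G,\omega_t}\ge d+1$ always. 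Then a standard eigenvalue-continuity argument shows that the remaining $n-(d+1)$ eigenvalues of $L_{G,\omega^*}$, which are strictly positive, stay strictly positive for $t$ in a neighborhood of $0$; so for small $t>0$ we get $L_{G,\omega_t}\succeq 0$ with nullity \emph{exactly} $d+1$.

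The main obstacle is property (b): why is $\omega_t$ non-splittable for small $t>0$? Non-splittability at a vertex $v$ fails iff there is a proper nonempty $F\subsetneq E_G(v)$ with $\sum_{f=uv\in F}\omega_t(f)(p(u)-p(v))=0$. There are only finitely many vertices $v$ and finitely many subsets $F$, so it suffices to show that for each fixed pair $(v,F)$ the equation $\sum_{f=uv\in F}\omega_t(f)(p(u)-p(v))=0$ fails for all small $t>0$. For fixed $(v,F)$ the left-hand side is an affine function of $t$: it equals $A_{v,F}(\omega^*) + t\,A_{v,F}(\omega_0)$ where $A_{v,F}(\omega):=\sum_{f=uv\in F}\omega(f)(p(u)-p(v))$. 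Since $\omega_0$ is non-splittable, $A_{v,F}(\omega_0)\ne 0$, so this affine function of $t$ vanishes for at most one value of $t$; excluding that finite set of bad $t$-values over all $(v,F)$ leaves a punctured neighborhood of $0$ on which $\omega_t$ is non-splittable. (A subtlety to check: one must handle the cases where $p(u)=p(v)$ for some $f=uv\in F$, so that the corresponding term contributes $0$ regardless of the weight — but these terms simply drop out of $A_{v,F}$ uniformly in $\omega$, so the argument is unaffected; and one should note $A_{v,F}(\omega_0)\ne 0$ uses precisely that $\omega_0$ is non-splittable at $v$ with respect to \emph{that} $F$.) Intersecting the neighborhoods from (b) and (c) and choosing any $t>0$ in the intersection yields the desired stress $\omega:=\omega_t$, completing the proof.
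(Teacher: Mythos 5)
Your proof is correct and takes essentially the same approach as the paper: both form a one-parameter family combining the super-stability certificate stress with the non-splittable stress, let the certificate term dominate (small $t$, equivalently large $C$) so that $L_{G,\omega}$ stays positive semidefinite with nullity $d+1$, and then pick the parameter generically to avoid the finitely many linear conditions that would make the combination splittable. Your handling of the kernel step is slightly more explicit than the paper's, which states the inclusion $\ker L_{G,\omega_2}\subseteq\ker L_{G,\omega_1}$ with the subscripts apparently reversed — the correct containment is $\ker L_{G,\omega_1}\subseteq\ker L_{G,\omega_2}$, as both contain the $(d+1)$-dimensional space spanned by the rows of $P$ together with the all-ones vector and $\ker L_{G,\omega_1}$ has exactly that dimension — but otherwise the two arguments coincide.
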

\begin{proof}
Since the tensegrity is super stable, by the stress condition for super stability, it has an equilibrium stress $\omega_1$ such that $L_{G,\omega_1}$ is positive semidefinite with nullity $d+1$.
Let $\omega_2$ be a non-splittable equilibrium stress,
and let $\omega=C\omega_1+\omega_2$ for $C \in \mathbb{R}$.
As $L_{G,\omega}=CL_{G,\omega_1}+ L_{G,\omega_2}$
and $\ker L_{G,\omega_2}\subseteq \ker L_{G,\omega_1}$ hold, for sufficiently large $C$, the matrix $L_{G,\omega}$ is positive semidefinite with nullity $d+1$.
For a given point configuration $p$, the set of all non-splittable equilibrium stresses are the complement of finite number of hyperplanes defined by the equation~(\ref{eq:split}) in the space of equilibrium stresses. Since $\omega$ is non-splittable for $C=0$, $\omega$ is non-splittable for a generic choice of $C$.
Thus for a sufficiently large and generic number $C$, $\omega$ has the desired property.
\end{proof}

We are now ready to prove our main technical lemma.
A vertex of a connected multigraph is said to be {\em pendant} if 
it is adjacent to exactly one vertex.
\begin{lemma}\label{lem:edge_contraction}
Let $G$ be a connected multigraph and $e$ be an edge of $G$.
Suppose $G/e$ has a $d$-dimensional super stable tensegrity realization.
Then $G$ has a $d$-dimensional super stable tensegrity realization.

Moreover, if no endvertex of $e$ is pendant  and the realization of $G/e$ is injective and has a non-splittable equilibrium  stress, then the realization of $G$ can be injective with a non-splittable equilibrium stress.
\end{lemma}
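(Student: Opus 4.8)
The plan is to mimic the Schur-complement / path-lifting strategy from \cite{JT,H96} that is already set up by Propositions~\ref{prop:schur rank}, \ref{prop:schur}, and \ref{prop:transversal}. Write $e=uv$ and let $w$ be the vertex of $G/e$ obtained by identifying $u$ and $v$. Start from a $d$-dimensional super stable realization $(G/e,\bar\sigma,\bar p)$ with certifying Laplacian $\bar L\in{\cal L}^*(G/e)\cap{\cal L}^n_{+,n-(d+1)}$ (so $n=|V(G)|-1$ here), chosen by Proposition~\ref{prop:conic} so that ${\cal L}(G/e)$ meets ${\cal L}^{n}_{n-(d+1)}$ transversally at $\bar L$. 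The goal is to produce an $(n+1)\times(n+1)$ Laplacian $L\in{\cal L}^*(G)$ that is positive semidefinite of corank $d+1$ and satisfies the Euclidean SAP, and whose reduced kernel representation is the desired realization of $G$; then Propositions~\ref{prop:condition1} and \ref{prop:conic} finish the ``moreover''-free part.

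The construction of $L$ goes as follows. Split vertex $w$ back into $u,v$: edges of $G/e$ at $w$ are distributed to $u$ and $v$ according to their original incidences in $G$, and we add an edge between $u$ and $v$ for $e$ itself (plus any parallel copies). Parametrize candidate Laplacians $L_{t}$ supported by $G$ by a ``splitting parameter'' together with the weight $\mu$ on $e$, arranging that the Schur complement $L_t/1$ (eliminating, say, the row/column of $u$) stays inside ${\cal L}(G/e)$; this is exactly the linear-algebra setup behind Proposition~\ref{prop:schur}, and it is where the freedom to choose how the stress at $w$ is distributed between $u$ and $v$ enters. Because ${\cal L}(G/e)$ has codimension at least one in the relevant ambient space of Schur complements of matrices in ${\cal L}(G)$ (this is where $2$-connectivity / the non-pendant hypothesis is used, to guarantee there genuinely is extra freedom), Proposition~\ref{prop:transversal} gives a smooth path $\gamma(t)$ with $\gamma(0)$ corresponding to $\bar L$ and $\gamma(t)$ for $t\in(0,1]$ lying in ${\cal L}^*(G)\cap{\cal L}^{n+1}_{n+1-(d+1)}$, positive semidefinite for small $t>0$ by Proposition~\ref{prop:schur rank}; transversality of the path endpoint translates, via Proposition~\ref{prop:conic}, into the Euclidean SAP for $L_t$.

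For the ``moreover'' clause, assume $(G/e,\bar\sigma,\bar p)$ is injective with a non-splittable equilibrium stress $\bar\omega$. First apply Lemma~\ref{lem:non-splittable} to assume $L_{G/e,\bar\omega}$ itself is PSD of nullity $d+1$ and $\bar\omega$ non-splittable. The point configuration $p_t$ attached to $L_t$ via Proposition~\ref{prop:schur} has $p_t(v)$ close to $\bar p(w)$ and $p_t(u)=-\mu^{-1}s^\top P'^\top$ determined by the splitting data; one must choose the split of the edges at $w$ so that $p_t(u)\neq p_t(v)$ for small $t>0$ — this is possible precisely because $\bar\omega$ is non-splittable at $w$, i.e. no nonempty proper subset of the edges at $w$ has vanishing weighted vector sum, so the two ``halves'' pull $p_t(u)$ off of $\bar p(w)$. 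Injectivity at all other vertex pairs is inherited by continuity from injectivity of $\bar p$. Finally one checks non-splittability of the resulting stress $\omega_t$ of $(G,\sigma,p_t)$ is an open, generic condition (as in the proof of Lemma~\ref{lem:non-splittable}: it is the complement of finitely many hyperplanes in stress space) that holds at the limit $t\to0$ after pushing forward $\bar\omega$, hence holds for small generic $t$.

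The main obstacle is the ``moreover'' part, specifically engineering the split of the edges at $w$ between $u$ and $v$ so that simultaneously (a) the Schur complement stays on ${\cal L}(G/e)$ so the path argument applies, (b) $p_t(u)\neq p_t(v)$, and (c) the transversality needed for the Euclidean SAP is not destroyed; reconciling these three requirements is the delicate bookkeeping, and it is exactly here that the hypotheses ``no endvertex of $e$ is pendant'' (to have at least two edges to split on each side, or one edge plus $e$) and ``$\bar\omega$ non-splittable'' (to force $p_t(u)$ away from $p_t(v)$) are consumed. Everything else — positive semidefiniteness, corank, the SAP — is routine once the path from Proposition~\ref{prop:transversal} is in hand.
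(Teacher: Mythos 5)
Your overall strategy---reverse the Schur complement at the split vertex, parametrize candidate Laplacians of $G$ by the contraction-direction weight and a scalar $\varepsilon$, and lift a path off the $\varepsilon=0$ slice via Proposition~\ref{prop:transversal}---is the same as the paper's. However, several specifics are off or missing.

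First, you place the hypothesis ``no endvertex of $e$ is pendant'' in the wrong spot. You invoke it (together with $2$-connectivity, which is not in the lemma at all) to guarantee that the $\varepsilon=0$ slice has codimension at least one inside the ambient family so that Proposition~\ref{prop:transversal} applies. That is not where this hypothesis lives: the first assertion of the lemma holds with no degree assumption on $e$ whatsoever. In the paper the relevant family is $\mathcal{M}=\{(L_{G/e,\omega}+\varepsilon(\cdots),\varepsilon)\}$ with $\mathcal{M}_0$ the $\varepsilon=0$ slice; the codimension of $\mathcal{M}_0$ in $\mathcal{M}$ is one by fiat of the extra parameter $\varepsilon$, and the transversality of $\mathcal{M}_0$ with $\mathcal{L}^{n-1}_{(n-1)-(d+1)}\times\mathbb{R}$ is read off from the Euclidean SAP of $L_{G/e,\omega^*}$ via Proposition~\ref{prop:conic}. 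The non-pendant hypothesis is consumed only in the ``moreover'' step, where it guarantees that $E_G(v_0)$ is a nonempty proper subset of $E_{G/e}(v_1)$, so that the would-be identity $\sum_{f=v_0u}\omega^*(f)(p(u)-p(v_1))=0$ exhibits a genuine splitting of $\omega^*$ at $v_1$.

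Second, your verification of the conic condition for the lifted tensegrity is circular. You claim ``transversality of the path endpoint translates, via Proposition~\ref{prop:conic}, into the Euclidean SAP for $L_t$,'' but Proposition~\ref{prop:conic} was already used in the opposite direction: the SAP of the \emph{endpoint} $L_{G/e,\omega^*}$ gave the transversality that launches the path, and this says nothing directly about the SAP of the perturbed matrix $L_\varepsilon$ or of the reconstructed $L_{G,\omega'_\varepsilon}$. The paper instead takes the limit $\varepsilon\to 0$ of the reduced kernel configuration $p'_\varepsilon$ of $L_{G,\omega'_\varepsilon}$, observes that $p'_0$ simply duplicates $p(v_1)$ at $v_0$ (so $(G,p'_0)$ inherits the conic condition from $(G/e,p)$), and then invokes continuity together with Proposition~\ref{prop:conic_generic}.

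Third, the closing non-splittability argument is hand-waving where care is needed. You say non-splittability ``holds at the limit $t\to0$ after pushing forward $\bar\omega$,'' but there is no coherent limiting stress on $G$: the weight $\omega'_\varepsilon(v_0v_1)=\varepsilon^{-1}+s_\varepsilon$ blows up as $\varepsilon\to0$, so ``pushing forward'' and taking a limit is not meaningful. The paper argues instead that if $\omega'_\varepsilon$ were splittable for all small $\varepsilon$, the split must occur at $v_0$ or $v_1$ (since $\omega_\varepsilon$ is non-splittable in $G/e$), that one may always take the defining subset $F$ to avoid the edge $v_0v_1$ (replace $F$ by its complement if necessary), and that letting $\varepsilon\to0$ then exhibits a splitting of $\omega^*$ at $v_1$ in $G/e$, contradicting the hypothesis. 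Finally, you omit the preliminary reduction to the case where $e$ has no parallel edge, which the paper handles by induction and a generic redistribution of the weight; without it, $\mathcal{L}(G/e)$ need not sit with codimension one as you assert.
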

\begin{proof}
We first consider the case when there is a parallel edge $e'$ to $e$ in $G$
and show that this case can be reduced to the case when there is no parallel edge to $e$.
Suppose $e$ and $e'$ are parallel. 
Then $(G-e')/e=G/e$.
So $G-e'$ also satisfies the condition of the lemma,
and by induction on the number of edges, the conclusion of the lemma holds for $G-e'$.
Then the statement for $G$ is immediate except for the non-splittability of the stress.
For this, construct a stress $\omega'$ of $G$ from a stress $\omega$ of $G-e'$ by distributing $\omega(e)$ generically to $e$ and $e'$.
Then by the similar argument as in the proof Lemma~\ref{lem:non-splittable}, $\omega'$ is non-splittable if $\omega$ is non-splittable.

Thus we may assume that there is no parallel edge to $e$ in $G$.
Let $n=|V(G)|$. Also, let $e=v_0v_1$, $X=N_G(v_0)\setminus \{v_1\}$, $Y=V\setminus (X\cup \{v_0,v_1\})$, and $E_{0}=E_G(v_0)$ for simplicity of description.
        We simply denote by $v_1$ the vertex in $G/e$ after the contraction of  $e=v_0v_1$.
        Each edge $f\in E_{0}\setminus \{e\}$ in $G$ remains in $G/e$ after the contraction and we keep using the same notation $f$ to denote the edge after the contraction. 
        By this convention, we have $E(G/e)=E(G)\setminus \{e\}$.
        
        The proof strategy goes as follows. By the lemma assumption, we know that there is a Laplacian matrix $L_{G/e}$ of $G/e$ that certifies the super stability of a realization of $G/e$, and our goal is to construct a certificate for $G$ from $L_{G/e}$. Let $G'$ be the graph obtained from $G/e$ by adding all edges between vertices of $X$. We will see that, if we take the Schur complement  of a Laplacian matrix of $G$ at the diagonal entry indexed by $v_0$, then the resulting matrix is a Laplacian matrix of $G'$.
        Hence, in view of Proposition~\ref{prop:schur rank}, we may focus on constructing a Laplacian matrix of $G'$ which is the Shur complement of some Laplacian of $G$.   The idea is to construct such a Laplacian $L_{G'}$ of $G'$ by a perturbation of $L_{G/e}$ and then recover a Laplacian of $G$ from $L_{G'}$ by reversing the process of taking the Schur complement. Not all Laplacian matrices of $G'$ are the Schur complement of Laplacian matrices of $G$, so we need to understand which Laplacian matrix of $G'$ can be the Schur complement of a Laplacian of $G$.
        
        To see this, consider any edge weight $w:E(G)\rightarrow \mathbb{R}$ and $L_{G,\omega}\in {\cal L}(G)$ such that the diagonal entry indexed by $v_0$ is nonzero.
        For simplicity of description, for each subgraph $H$, we use $L_{H,\omega}$ to denote $L_{H,\omega_{|H}}$, where $\omega_{|H}$ is the restriction of $\omega$ to $E(H)$.
        We first split $L_{G,\omega}$ into two matrices such that all the nonzero entries supported by edges incident to $v_0$ are given in the second matrix as follows:
        \begin{align}
        L_{G,\omega}&=L_{G-E_{0},\omega}+L_{E_{0},\omega}.
        \label{eq:contraction-1}
         \end{align}
         (Recall that $E_0$ denotes the set of edges incident to $v_0$ in $G$.)
        We then introduce notation for entries of the second matrix $L_{E_0,\omega}$ as follows:
        \begin{align}
        L_{G,\omega}&=L_{G-E_{0},\omega}+ \kbordermatrix{ &v_0& v_1 & X  & Y \\
        v_0 & \varepsilon^{-1} & -\varepsilon^{-1}- s & \omega_X^{\top} &   0  \\
        v_1 & -\varepsilon^{-1} - s   & \varepsilon^{-1}+s & 0 & 0 \\ 
        X &  \omega_X & 0 & -{\rm diag}(\omega_X) & 0 \\
        Y & 0 & 0 & 0 & 0 },\label{eq:contraction0}
         \end{align}
        where $\varepsilon^{-1}$ denotes $L_{G,\omega}[v_0,v_0]$, 
        $\omega_X \in \mathbb{R}^{X}$ is the vector obtained by arranging 
        the edge weight between $u$ and $v_0$ over $u\in X$,  and $s=\sum_{u\in X} \omega_X(u)$.
        (Here $s$ is defined to be $\sum_{u\in X} \omega_X(u)$  since the sum of the entries of the first column (or the first row) in the second matrix must be zero.)

        If we take the Schur complement of (\ref{eq:contraction0}) at the left-top corner, then the resulting matrix $L_{G,\omega}/v_0$ becomes
        \begin{equation}\label{eq:contraction2}
        L_{G,\omega}/v_0= L_{G-v_0,\omega}+ \kbordermatrix{ & v_1 & X  &  Y \\
        v_1 & -s-\varepsilon s^2   & \omega_X^{\top}+\varepsilon s \omega_X^{\top} & 0  \\ 
        X &  \omega_X +\varepsilon s \omega_X  & -{\rm diag}(\omega_X) -\varepsilon \omega_X\omega_X^{\top}& 0 \\
        Y & 0 & 0 & 0  }
         \end{equation}
        where $L_{G-v_0,\omega}\in {\cal L}(G-v_0)$ is obtained from $L_{G-E_0,\omega}$ by removing the top zero row and the left zero column.
        This can be further written as 
        \begin{align}
        \nonumber
            L_{G,\omega}/v_0 &= 
            L_{G-v_0,\omega}+ \kbordermatrix{ & v_1 & X  &  Y \\
        v_1 & -s   & \omega_X^{\top} & 0  \\ 
        X &  \omega_X   & -{\rm diag}(\omega_X) & 0 \\
        Y & 0 & 0 & 0  }+ \varepsilon \left(\kbordermatrix{ & v_1 & X  & Y  \\
        v_1 & -s^2   & s \omega_X^{\top} & 0 \\ 
        X &   s \omega_X & -\omega_X\omega_X^{\top}  & 0 \\
        Y & 0 & 0 & 0 }\right)\\
        \label{eq:contraction3}
        &=L_{G/e,\omega}+ \varepsilon \left(\kbordermatrix{ & v_1 & X  & Y  \\
        v_1 & -s^2   & s \omega_X^{\top} & 0 \\ 
        X &   s \omega_X & -\omega_X\omega_X^{\top}  & 0 \\
        Y & 0 & 0 & 0 }\right),
         \end{align}
         where $L_{G/e,\omega}$ is the Laplacian of $G/e$ weighted by  $\omega_{|G-v_0v_1}$. 
         (Recall our convention that $E(G/e)=E(G)\setminus \{v_0v_1\}$).
         Note that $L_{G,\omega}/v_0\in {\cal L}(G')$
         and (\ref{eq:contraction3})  is exactly the form we are looking at.
         %Observe that a matrix given in the form of (\ref{eq:contraction3}) can be recovered to a Laplacian matrix $L_{G,\omega}$ of $G$ by tracing back the above process.
         
         With this relation in mind, we consider the following set in $\mathcal {L}^n\times \mathbb{R}$.
         \[
         {\cal M}:=\left\{ 
         \left(L_{G/e,\omega}+ \varepsilon \left(\kbordermatrix{ & v_1 & X  & Y \\
        v_1 & -s^2   & s \omega_X^{\top} & 0 \\ 
        X &   s \omega_X & -\omega_X\omega_X^{\top}  & 0 \\
        Y & 0 & 0 & 0 }\right),\varepsilon \right) :
         \begin{array}{l} \omega:E(G)\setminus \{v_0v_1\} \rightarrow \mathbb{R}  
         \\   \varepsilon \in \mathbb{R}
         \end{array}
         \right\}
         \]
         and let ${\cal M}_0$ be the subset of ${\cal M}$ consisting of elements with $\varepsilon=0$. 
         Observe that the first argument in any element in ${\cal M}$ is of the form (\ref{eq:contraction3}).
         
           Let $(G/e,\sigma,p)$ be a $d$-dimensional super stable tensegrity realization,
   and let $L_{G/e,\omega^*}$ be a weighted Laplacian matrix of $G/e$ that certifies the super stability of $(G/e,\sigma,p)$.
   By applying an affine transformation, we may suppose that $p$ is a reduced kernel representation of $L_{G/e,\omega^*}$.
        Then, $(L_{G/e,\omega^*},0)\in {\cal M}_0$. 
        Observe also that, for any $(L,0)\in {\cal M}_0$, $T_{(L,0)} {\cal M}_0=T_L {\cal L}(G/e)\times \{0\}$ holds for the tangent space.
        Hence, by Proposition~\ref{prop:conic}, the Euclidean SAP of $L_{G/e,\omega^*}$ implies that ${\cal M}_0$ and ${\cal L}_{(n-1)-(d+1)}^{n-1} \times \mathbb{R}$ intersect transversally at $(L_{G/e,\omega^*}, 0)$.
        By Proposition~\ref{prop:transversal}, there is a positive number $\overline{\varepsilon}$ and a smooth function $\gamma:[-\overline{\varepsilon}, \overline{\varepsilon}] \rightarrow \mathcal{L}^{n-1}\times \mathbb{R}$ such that 
        $\gamma(0)=(L_{G/e,\omega^*},0)$ and $\gamma(\varepsilon)\in {\cal M} \cap ({\cal L}_{(n-1)-(d+1)}^{n-1}\times \mathbb{R})$ for $\varepsilon \in [-\overline{\varepsilon}, \overline{\varepsilon}]$.
        Denote $\gamma(\varepsilon)=(L_{\varepsilon}, \varepsilon)$.
        %Let $L_{\varepsilon}$ be such that $(L_{\varepsilon}+J, \varepsilon)=\gamma(\varepsilon)$.
        Since the entries of $L_{\varepsilon}$ continuously change with respect to $\varepsilon$, 
         we have $L_{\varepsilon}\in {\cal L}_{(n-1)-(d+1)}^{n-1}$ and $L_{\varepsilon}\succeq 0$ if $\varepsilon$ is sufficiently small.
         
         By the definition of $\cal M$, there is an edge weight $\omega_{\varepsilon}: E(G)\setminus \{v_0v_1\}\rightarrow \mathbb{R}$ such that $L_{\varepsilon}$ is given in the form of (\ref{eq:contraction3}) with respect to  $\omega_{\varepsilon}$ and $\varepsilon$. 
         By the continuity of $L_{\varepsilon}$ with respect to $\varepsilon$ and $L_0=L_{G/e,\omega^*}$, we can take $\omega_{\varepsilon}$ continuously with respect to $\varepsilon$ such that $\omega_0=\omega^*$.
         
         For $\varepsilon>0$, we extend $\omega_{\varepsilon}$, which is defined over $E(G)\setminus \{v_0v_1\}$, to an edge weight  $\omega_{\varepsilon}'$ of $E(G)$ 
         by setting
         \begin{equation}\label{eq:bridging_stress}
         \omega_{\varepsilon}'(v_0v_1) = \varepsilon^{-1}+s_{\varepsilon},
         \end{equation}
         where $s_{\varepsilon}=\sum_{u\in X} \omega_{\varepsilon, X}(u)$.
         Since  $L_{\varepsilon}$ is of the form (\ref{eq:contraction3}) with respect to $\omega_{\varepsilon}$, $L_{G,\omega_{\varepsilon}'}$ is the Laplacian matrix of $G$ obtained  from $L_{\varepsilon}$ by reversing the above process, i.e., 
         $L_{G,\omega_{\varepsilon}'}/v_0=L_{\varepsilon}$.
         (The definition (\ref{eq:bridging_stress}) is coming from the $ v_0v_1$-th entry of the matrix in (\ref{eq:contraction0}).)
         
         As $\varepsilon>0$ and $L_{\varepsilon}\succeq 0$, by Proposition~\ref{prop:schur rank},
         $L_{G,\omega_{\varepsilon}'}$ is positive semidefinite and its nullity is equal to that of $L_{\varepsilon}$.
         Therefore,  $L_{G,\omega_{\varepsilon}'} \in {\cal L}^*(G)\cap {\cal L}_{n-(d+1)}^n$ and $L_{G,\omega_{\varepsilon}'} \succeq 0$ if $\varepsilon$ is a sufficiently small nonzero number.

         We now construct a realization of $G$.
         Since the entries of $L_{\varepsilon}$ continuously change with respect to $\varepsilon$, we can take a smooth function 
        $[-\overline{\varepsilon}, \overline{\varepsilon}]\ni \varepsilon \mapsto p_{\varepsilon}\in (\mathbb{R}^d)^{n-1}$ such that 
        $p_0=p$ and $p_{\varepsilon}$ is a reduced kernel representation of $L_{\varepsilon}$.
        For $\varepsilon>0$, we can apply Proposition~\ref{prop:schur} to $p_{\varepsilon}$ to get an extension $p_{\varepsilon}'$ of $p_{\varepsilon}$ such that $p_{\varepsilon}'$ is a reduced kernel representation of $L_{G,\omega_{\varepsilon}'}$.
        Specifically, by using the formula of Proposition~\ref{prop:schur} and $s_{\varepsilon}=\sum_{u\in X} \omega_{\varepsilon, X}(u)$, we have 
        \begin{align}\label{eq:explicite_form}
        p_{\varepsilon}'(v_0)&=p_{\varepsilon}(v_1)
        -\varepsilon\left( \sum_{f=v_0u\in E_{0}}\omega_{\varepsilon}(f)(p_{\varepsilon}(u)-p_{\varepsilon}(v_1))\right).
        \end{align}
        Since $p_{\varepsilon}'$ is a reduced kernel representation of $L_{G,\omega_{\varepsilon}'}$, 
        $L_{G,\omega_{\varepsilon}'}$ certifies the stress condition for super stability of tensegrity $(G,\sigma',p_{\varepsilon}')$ (by taking an appropriate sign function $\sigma'$).
        
        We check the conic condition of $(G,\sigma',p_{\varepsilon}')$.
        Since $p_{\varepsilon}'$ is an extension of $p_{\varepsilon}$
        and $p_{\varepsilon}'(v_0)$ is given by (\ref{eq:explicite_form}), 
        $p_0'$ is well-defined by letting $\varepsilon \rightarrow 0$,
        and we have 
        \begin{align*}
        p_{0}'(u)&=p_{0}(u)=p(u) \qquad (u\in V(G)\setminus \{v_0\}) \\ p_0'(v_0)&=p_0(v_1)=p(v_1).
        \end{align*}
Therefore, since $(G/e,p)$ satisfies the conic condition (as $(G/e, \sigma,p)$ is super stable), 
$(G,p_0')$ satisfies the conic condition.
By continuity of $p_{\varepsilon}'$ and Proposition~\ref{prop:conic_generic}, $(G,p_{\varepsilon}')$ also satisfies the conic condition if $\varepsilon$ is sufficiently small.

In total, $(G,\sigma',p_{\varepsilon}')$ is a $d$-dimensional super stable tensegrity realization of $G$.

Finally we check that $(G,\sigma',p_{\varepsilon}')$ is injective
and admits a non-splittable stress if so does $(G/e,\sigma,p)$.
By Lemma~\ref{lem:non-splittable}, we may suppose that the initial stress $\omega^*:E(G/e)\rightarrow \mathbb{R}$ of $(G/e,\sigma,p)$ is non-splittable.
If $(G,\sigma',p_{\varepsilon}')$ is not injective for any small $\varepsilon$,
then $p_{\varepsilon}'(v_0)=p_{\varepsilon}'(v_1)$ by the injectivity of $(G/e,\sigma,p)$.
By (\ref{eq:explicite_form}), we get $\displaystyle \sum_{f=v_0u\in E_{0}}\omega_{\varepsilon}(f)(p_{\varepsilon}(u)-p_{\varepsilon}(v_1))=0$.
By continuity, this in turn implies 
$\displaystyle \sum_{f=v_0u\in E_{0}}\omega_{0}(f)(p_{0}(u)-p_{0}(v_1))=0$, and equivalently 
\begin{equation}\label{eq:E_0}
\sum_{f=v_0u\in E_{0}}\omega^*(f)(p(u)-p(v_1))=0
\end{equation}
by $\omega_0=\omega^*$ and $p_0=p$.
However, since $e=v_0v_1$ is not incident to a degree-one vertex, $E_0$ is a nonempty proper subset of $E_{G/e}(v_1)$, and 
(\ref{eq:E_0}) implies that $\omega^*$ is splittable at $v_1$ in $G/e$, a contradiction.
Thus, $(G,\sigma',p_{\varepsilon}')$ is injective.

Finally, we prove that the resulting stress $\omega_{\varepsilon}'$ of $(G,\sigma',p_{\varepsilon}')$ is non-splittable.
Suppose for a contradiction that $\omega_{\varepsilon}'$ is splittable for any small $\varepsilon$.
The non-splittability of $\omega^*$ and $\omega_0=\omega^*$ imply that 
$\omega_{\varepsilon}$ is non-splittable in $G/e$ if $\varepsilon$ is sufficiently small.
Since $\omega_{\varepsilon}'$ is an extension of $\omega_{\varepsilon}$,
$\omega_{\varepsilon}'$ must be splittable at $v_i$ for some $i\in \{0,1\}$.
So 
\begin{equation}\label{eq:splittable_at_v_i}
\sum_{f=uv_i\in F}\omega_{\varepsilon}'(f)(p_{\varepsilon}'(u)-p_{\varepsilon}'(v_i))=0
\end{equation}
for some nonempty $F\subsetneq E_G(v_i)$.
Since $\omega_{\varepsilon}'$ is an equilibrium stress of $(G,\sigma',p_{\varepsilon}')$, (\ref{eq:splittable_at_v_i}) still holds by replacing $F$ with $E_G(v_i)\setminus F$.
Thus, we may always suppose that $v_0v_1\notin F$.
Then, by letting $\varepsilon\rightarrow 0$ in (\ref{eq:splittable_at_v_i}), we obtain
\[
0=\sum_{f=uv_i\in F}\omega_{0}'(f)(p_{0}'(u)-p_{0}'(v_i))=\sum_{f=uv_1\in F}\omega^*(f)(p(u)-p(v_1)).
\]
Since $F\subsetneq E_{G/e}(v_1)$,
this implies that $\omega^*$ is splittable at $v_1$ in $G/e$, a contradiction.
This contradiction completes the proof that $\omega_{\varepsilon}'$ is non-splittable.
\end{proof}

\subsection{Minor Monotonicity} \label{subsec:minor}
The super stability enjoys a minor monotone property in the following sense.
\begin{theorem}\label{thm:minor_monotone}
Let $G$ be a connected multigraph  and $H$ be a minor of $G$. 
If $H$ admits a $d$-dimensional super stable tensegrity realization, then 
$G$ admits a $d$-dimensional super stable tensegrity realization.
\end{theorem}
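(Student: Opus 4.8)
The plan is to reduce Theorem~\ref{thm:minor_monotone} to the edge-wise operations already established in Lemmas~\ref{lem:edge_removal} and~\ref{lem:edge_contraction}. Recall that $H$ being a minor of $G$ means $H$ is obtained from $G$ by a sequence of edge deletions, edge contractions, and deletions of isolated vertices. Equivalently, reading the sequence backwards, $G$ can be obtained from $H$ by a sequence of operations each of which is either (a) adding an edge (possibly parallel to an existing one, or incident to a newly added vertex), or (b) uncontracting an edge, i.e.\ splitting a vertex $v$ into two vertices joined by an edge and distributing $E_G(v)$ between them. Since adding an isolated vertex is harmless (we only need one connected component to carry the realization, and the super stability number is defined via connected components), we may ignore isolated vertices entirely and assume all graphs in the sequence are connected.

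First I would set up an induction on the length of such a construction sequence from $H$ to $G$. The base case is $G=H$, which is immediate. For the inductive step, let $G'$ be the graph one step before $G$ in the sequence, so that $G'$ admits a $d$-dimensional super stable tensegrity realization by the inductive hypothesis, and $G$ is obtained from $G'$ by a single operation. If $G$ is obtained from $G'$ by adding an edge, then Lemma~\ref{lem:edge_removal} (applied with the roles reversed: $G'$ plays the role of $G-e$ and $G$ plays the role of $G$) gives a $d$-dimensional super stable realization of $G$. If $G$ is obtained from $G'$ by a vertex split, then $G/e=G'$ for the newly created edge $e$, so $G'$ having a $d$-dimensional super stable realization is exactly the hypothesis of Lemma~\ref{lem:edge_contraction}, which then yields a $d$-dimensional super stable realization of $G$. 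This completes the induction.

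The one point that needs a small amount of care is connectivity: Lemmas~\ref{lem:edge_removal} and~\ref{lem:edge_contraction} are stated for connected multigraphs. When we reconstruct $G$ from $H$, we should choose the construction sequence so that each intermediate graph is connected. This is possible: start from a connected component $H_0$ of $H$ that realizes the super stability number (i.e.\ with $\lambda(H_0)=\lambda(H)\ge d$), and observe that $H_0$ is itself a minor of $G$ via a branch decomposition, so we may build up a connected subgraph of $G$ starting from $H_0$ by adding the remaining edges and performing the remaining vertex splits, keeping connectivity at each step (any edge or vertex-split we still need to perform to reach $G$ touches the current connected piece, or can be reordered so that it does, because $G$ is connected). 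The resulting realization is then a $d$-dimensional super stable realization of $G$.

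I do not expect a serious obstacle here; the theorem is genuinely a corollary of the two preceding lemmas, and the only subtlety is bookkeeping with connectivity and with isolated vertices, handled as above. The heavy lifting — especially the vertex-splitting case — has already been done in Lemma~\ref{lem:edge_contraction}.
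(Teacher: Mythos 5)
Your proposal is correct and is essentially the paper's proof: the paper also observes that a connected minor $H$ of a connected $G$ can be obtained from $G$ by contracting edges and removing non-bridge edges (so that every intermediate graph stays connected), and then invokes Lemma~\ref{lem:edge_removal} and Lemma~\ref{lem:edge_contraction}. Your extra bookkeeping about isolated vertices and reordering the construction sequence is just an expanded version of the paper's one-line appeal to the same standard fact.
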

\begin{proof}
We may suppose that $H$ is connected.
Since $G$ is connected and $H$ is a minor of $G$,
$H$ can be obtained from $G$ by contracting edges and removing non-bridge edges.
Hence the statement follows from Lemmas~\ref{lem:edge_removal} and~\ref{lem:edge_contraction}.
\end{proof}

\begin{corollary}\label{cor:monotonicity_lambda}
Let $G$ and $H$ be multigraphs.
If $H$ is a minor of $G$, then $\lambda(H)\leq \lambda(G)$. 
\end{corollary}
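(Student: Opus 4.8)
The plan is to deduce the corollary from Theorem~\ref{thm:minor_monotone} after reducing to the connected case. First I would unwind the definition of $\lambda$ for disconnected multigraphs: by definition there is a connected component $H_0$ of $H$ with $\lambda(H)=\lambda(H_0)$, so it suffices to prove $\lambda(H_0)\le \lambda(G)$, and now $H_0$ is a \emph{connected} minor of $G$. I would also note that $\lambda$ is always a well-defined nonnegative integer on a connected multigraph (a single vertex, or a single edge, already admits a $0$-dimensional super stable realization as in Example~1, and $\lambda$ is bounded above by $|V|-1$), so the maximum defining $\lambda(H_0)$ is attained; in particular $H_0$ has a $\lambda(H_0)$-dimensional super stable tensegrity realization.

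Next I would argue that a connected minor of $G$ is in fact a minor of a single connected component of $G$. Recall that $H_0$ being a minor of $G$ amounts to a family of pairwise vertex-disjoint connected branch sets $\{B_v\subseteq V(G): v\in V(H_0)\}$ together with a choice, for each edge $uv\in E(H_0)$, of an edge of $G$ between $B_u$ and $B_v$. Since $H_0$ is connected, any two branch sets are joined through a sequence of such realizing edges, so the union of all branch sets and all chosen edges is a connected subgraph of $G$, hence contained in a single connected component $G_0$ of $G$. Therefore $H_0$ is a minor of $G_0$.

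Then I would apply Theorem~\ref{thm:minor_monotone} to the connected multigraph $G_0$ and its connected minor $H_0$ with $d:=\lambda(H_0)$: since $H_0$ has a $d$-dimensional super stable tensegrity realization, so does $G_0$, whence $\lambda(G_0)\ge d=\lambda(H_0)$. Finally, $\lambda(G_0)\le \lambda(G)$ directly from the definition of $\lambda$ on general multigraphs, so $\lambda(H)=\lambda(H_0)\le \lambda(G_0)\le \lambda(G)$, completing the proof.

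I do not expect a genuine obstacle here; the only points requiring care are the definitional bookkeeping of $\lambda$ on disconnected graphs, the (elementary) observation that a connected minor is captured inside one connected component of $G$, and the remark that the maximum in the definition of $\lambda(H_0)$ is attained so that choosing $d=\lambda(H_0)$ is legitimate. All of the substantive work is already contained in Theorem~\ref{thm:minor_monotone}, i.e.\ in Lemmas~\ref{lem:edge_removal} and~\ref{lem:edge_contraction}.
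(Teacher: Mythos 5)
Your proposal is correct and takes the same route the paper intends: Corollary~\ref{cor:monotonicity_lambda} is stated as an immediate consequence of Theorem~\ref{thm:minor_monotone}, and your argument is precisely the elementary bookkeeping (reducing to a connected component $H_0$ of $H$, noting $H_0$ sits inside a single component $G_0$ of $G$, applying the theorem, and unwinding the definition of $\lambda$ on disconnected graphs) that the paper leaves implicit. No gaps.
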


As remarked above, the analogous statement for injective realizations dose not hold. The following is what we can currently achieve by Lemmas~\ref{lem:ear} and~\ref{lem:edge_contraction}.

\begin{theorem}\label{thm:injective}
Let $G$ be a $2$-connected multigraph.
Let $H$ be a $2$-connected multigraph which is a minor of $G$. 
If $H$ admits a $d$-dimensional injective super stable tensegrity realization 
which has a non-splittable equilibrium stress, then $G$ admits a $d$-dimensional injective super stable tensegrity realization.
\end{theorem}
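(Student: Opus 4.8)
The plan is to realize $G$ as the top of a chain of $2$-connected multigraphs that begins at $H$, pushing the given injective super stable realization of $H$ (together with its non-splittable equilibrium stress) up the chain by the lemmas of this section, and to arrange the chain so that all inverse edge contractions (``vertex splittings'') come before all ear attachments. For this to work the chain must pass, right after the vertex-splitting phase, through a genuine $2$-connected subgraph $G_0$ of $G$.

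\textbf{Graph-theoretic input.} The first thing I would prove is: there are a $2$-connected subgraph $G_0\subseteq G$ and $2$-connected multigraphs $G_0=G^{(0)},G^{(1)},\dots,G^{(k)}$ with $G^{(i+1)}=G^{(i)}/e_i$ for some $e_i\in E(G^{(i)})$ and $G^{(k)}\cong H$; that is, $H$ is obtained from a $2$-connected subgraph of $G$ by contracting edges, staying $2$-connected throughout. To prove this I would first pass to a minimal $2$-connected subgraph of $G$ that still has $H$ as a minor, and after this reduction choose a branch model of $H$ whose branch sets are spanned by minimal trees and whose connecting edges exhaust all remaining edges --- so the current graph is an ``expansion'' of $H$. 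If it is isomorphic to $H$ we stop; otherwise some branch tree $T_v$ has an edge, and for a leaf $\ell$ of $T_v$ with tree edge $f=\ell u$ one checks that $\{\ell,u\}$ is not a $2$-vertex-cut of $G$: deleting $\ell$ and $u$ leaves every remaining piece of $T_v-u$ attached, by a connecting edge (which each such piece must carry, since $G$ is $2$-connected), to $\bigcup_{w\ne v}B_w$, and the latter is connected because $H-v$ is connected ($H$ being $2$-connected). Since contracting an edge whose endvertices are not a $2$-cut preserves $2$-connectivity, $G/f$ is $2$-connected and is again an expansion of $H$ with fewer vertices, so we finish by induction.

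\textbf{Propagating the realization.} Granting the claim, the rest is routine. Going down in $i$ from $k$ to $0$: we have $G^{(i+1)}=G^{(i)}/e_i$, and since $G^{(i)}$ is $2$-connected it has no pendant vertex, so neither endvertex of $e_i$ is pendant; hence Lemma~\ref{lem:edge_contraction} converts a $d$-dimensional injective super stable realization of $G^{(i+1)}$ carrying a non-splittable equilibrium stress into one of $G^{(i)}$ with the same properties. After $k$ steps this gives such a realization of $G_0$. Finally, $G_0$ is a $2$-connected subgraph of the $2$-connected multigraph $G$, so, by the elementary fact recalled before Lemma~\ref{lem:ear}, $G=G_0\cup P_1\cup\cdots\cup P_r$ where each $P_j$ is an ear and each partial union is $2$-connected; applying Lemma~\ref{lem:ear} $r$ times yields a $d$-dimensional injective super stable realization of $G$. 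In this last phase non-splittability is no longer needed, which is precisely why it is harmless that Lemma~\ref{lem:ear} is silent about it.

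\textbf{Where the difficulty lies.} The propagation step is bookkeeping over the quoted lemmas; the substance is the graph-theoretic claim, and in particular the demand that $G_0$ be reached from $H$ by contractions \emph{only}. Edge deletions are the problem: their reverse is Lemma~\ref{lem:edge_removal}, which as stated does not carry non-splittability, so a deletion sitting below a contraction in the chain would wreck the induction. This is what forces the two-phase structure and the need to locate $G_0$ as an honest subgraph of $G$ that collapses onto $H$ by $2$-connectivity-preserving contractions. (A cleaner alternative would be to verify that the constructions in Lemma~\ref{lem:edge_removal} and in the subdivision step of Lemma~\ref{lem:ear} also preserve non-splittability for injective realizations --- which should follow from the very continuity and genericity arguments given there, since for an injective limit configuration the endpoints of the added or subdivided edge are distinct, so the cross terms that could exhibit splittability do not vanish --- whereupon any chain of edge additions, ear attachments and vertex splittings through $2$-connected multigraphs witnessing $H\preceq G$ works regardless of order.)
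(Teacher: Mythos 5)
Your proof is correct and follows essentially the same approach as the paper's: both decompose the minor relation into $2$-connectivity-preserving vertex splittings from $H$ up to a $2$-connected expansion $G_0\subseteq G$, followed by ear attachments from $G_0$ up to $G$, and then apply Lemma~\ref{lem:edge_contraction} and Lemma~\ref{lem:ear} in that order. The only cosmetic difference is that you argue $2$-connectivity survives each contraction by checking directly that the contracted pair is not a $2$-cut, whereas the paper phrases the same fact via the invariant that every leaf of each branch tree carries a connecting edge.
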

\begin{proof}
We show that $G$ can be constructed from $H$ by first applying vertex splitting operations and then attaching ears such that each intermediate multigraph is 2-connected.
If this is the case, then the theorem follows by first applying Lemma~\ref{lem:edge_contraction} and then Lemma~\ref{lem:ear}.

Since $H$ is a minor of $G$, $G$ has a subgraph $G'$ such that $V(G')$ admits a partition  $\{V_x:x \in V(H)\}$ such that each $V_x$ is associated with a vertex $x$ of $H$,
each $V_x$ induces a connected subgraph in $G'$, and $G'$ has  exactly $k$ edges between $V_x$ and $V_y$ if $H$ has $k$ edges between $x$ and $y$~(see, e.g.,\cite{D}).
Then one can obtain $H$ from $G'$ by contracting each $V_x$ into a vertex for all $V_x$.

We take $G'$ as small as possible. Then, for each $x \in V(H)$, $V_x$ induces a tree in $G'$ and every leaf of the tree is adjacent to a vertex in a different set $V_y$.
With this property, the 2-connectivity of $H$ implies the 2-connectivity of $G'$. Since this property is preserved by a contraction operation of an edge in the tree induced by $V_x$,  the 2-connectivity of $H$ also implies the 2-connectivity of any intermediate multigraph in the process of constructing $H$ from $G'$. Thus, $G'$ can be constructed from $H$ by  vertex splitting operations such that  each intermediate multigraph is 2-connected.

Now, $G'$ is 2-connected, so the 2-connectivity of $G$ implies that $G$ can be constructed from $G'$ by attaching ears keeping 2-connectivity.
Thus, $G$ admits a required construction. 
\end{proof}
To check whether a tensegrity realization of a graph $H$ admits a non-splittable stress, the following sufficient condition is useful.
\begin{lemma} \label{lem:split deg}
Let $(G,\sigma,p)$ be a tensegrity.
Suppose that the dimension of the affine span of $\{p(u):u \in N_G(v)\cup \{v\}\}$ is at least $|E_G(v)| -1$ for all $v \in V(G)$. Then every strictly proper equilibrium stress of $(G,\sigma,p)$ is non-splittable.
\end{lemma}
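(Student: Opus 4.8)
The plan is to argue by contradiction at a single vertex, using only a dimension count. Fix $v\in V(G)$ and suppose, toward a contradiction, that some strictly proper equilibrium stress $\omega$ is splittable at $v$, witnessed by a proper nonempty subset $F\subsetneq E_G(v)$. Write $E_G(v)=\{f_1,\dots,f_m\}$ with $m=|E_G(v)|$ and $f_i=u_iv$, and set $d_i:=p(u_i)-p(v)$; since $F$ is nonempty and a proper subset, $m\geq 2$. The first thing I would record is that the dimension of the affine span of $\{p(u):u\in N_G(v)\cup\{v\}\}$ equals $\dim\spa\{d_1,\dots,d_m\}$: translating by $-p(v)$ turns the affine span into a linear span, and the span of the multiset $\{d_1,\dots,d_m\}$ coincides with the span of the underlying set $\{p(u)-p(v):u\in N_G(v)\}$, so coincident neighbours or parallel edges do not affect the count. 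Hence the hypothesis of the lemma says exactly $\dim\spa\{d_1,\dots,d_m\}\geq m-1$.

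Next I would invoke the equilibrium condition (\ref{eq:equilibrium}) at $v$, which in the multigraph setting reads $\sum_{i=1}^{m}\omega(f_i)d_i=0$. Relabelling so that $F=\{f_1,\dots,f_k\}$ with $1\leq k\leq m-1$, the splittability relation (\ref{eq:split}) gives $\sum_{i=1}^{k}\omega(f_i)d_i=0$, and subtracting this from the equilibrium identity yields $\sum_{i=k+1}^{m}\omega(f_i)d_i=0$ as well. Both are linear dependences in which every coefficient $\omega(f_i)$ is nonzero, by strict properness. Using the elementary fact that a relation $\sum c_iw_i=0$ with all $c_i\neq 0$ forces one of the $w_i$ into the span of the others, I obtain $\dim\spa\{d_1,\dots,d_k\}\leq k-1$ and $\dim\spa\{d_{k+1},\dots,d_m\}\leq m-k-1$. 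Adding these, $\dim\spa\{d_1,\dots,d_m\}\leq(k-1)+(m-k-1)=m-2$, contradicting the bound from the previous paragraph. Since $v$ was an arbitrary vertex, $\omega$ is non-splittable, which is the claim.

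I do not anticipate a genuine obstacle: the proof is a short rank count. The only points needing mild care are (i) identifying the dimension of the affine span of the closed neighbourhood of $v$ with $\dim\spa\{d_1,\dots,d_m\}$ when the configuration may be non-injective and edges may be parallel, and (ii) reading the equilibrium condition at $v$ as a sum over the edges incident to $v$ rather than over the neighbours — the correct interpretation of (\ref{eq:equilibrium}) in our multigraph model, consistent with $L_{G,\omega}=\sum_{e=ij}\omega(ij)F_{ij}$. The rank-drop step is standard linear algebra and requires no separate treatment of zero direction vectors.
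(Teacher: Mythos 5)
Your proof is correct and takes essentially the same route as the paper: assume splittability at $v$, observe that the equilibrium condition forces the complementary subset $E_G(v)\setminus F$ to satisfy the same relation, extract a linear dependence (with all coefficients nonzero by strict properness) from each part, and add the two rank bounds to cap the span of the direction vectors at $|E_G(v)|-2$, contradicting the hypothesis. Your preliminary remark identifying the affine span of the closed neighbourhood with $\spa\{d_1,\dots,d_m\}$ and your explicit subtraction step are just slightly more verbose renderings of the same argument.
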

\begin{proof}
Suppose on the contrary that there is a strictly proper equilibrium stress $\omega$ which is splittable at a vertex $v \in V(G)$.
Then there is a proper nonempty subset $F \subseteq E_G(v)$ satisfying the equation~(\ref{eq:split}).
Since $\omega$ is strictly proper, the equation~(\ref{eq:split}) implies  $\{p(u)-p(v): f=uv \in F\}$ is linearly dependent.
Since $\omega$ is an equilibrium stress of $(G,\sigma,p)$, $E_G(v) \setminus F$ also satisfies the equation~(\ref{eq:split}), and hence $\{p(u)-p(v): f=uv \in E_G(v) \setminus F\}$ is linearly dependent.
Hence the dimension of the linear span of $\{p(u)-p(v): f=uv \in E_G(v)\}$ is at most $|E_G(v)|-2$, which means the affine span of $\{p(u): u\in N_G(v)\cup \{v\}\}$ has dimension at most $|E_G(v)|-2$,  a contradiction.
\end{proof}
% \begin{proof}
% We may suppose that $G$ is obtained from $H$ by removing or contracting an edge $e\in E(H)$.
% Let $G_1$ be a connected component of $G$ with $\lambda(G_1)=\lambda(G)$,
% and let $H_1$ be the connected component of $H$ that contains $G_1$ as a minor.

% We may suppose that $e\in E(H_1)$ since otherwise we have $G_1=H_1$ and hence $\lambda(G)=\lambda(G_1)=\lambda(H_1)\leq \lambda(H)$ follows.

% Suppose $G_1=H_1-e$.
% Then, by the connectivity of $G_1$, we can apply 
% Lemma~\ref{lem:edge_removal} to $H_1-e$ to obtain $\lambda(G_1)\leq \lambda(H_1)$.
% Thus,  $\lambda(G)=\lambda(G_1)\leq \lambda(H_1)\leq \lambda(H)$.

% Similarly, if $G_1=H_1/e$, 
% we have $\lambda(G)=\lambda(G_1)\leq \lambda(H_1)\leq \lambda(H)$ by Lemma~\ref{lem:edge_contraction}.

% Suppose $G_1$ is neither $H_1-e$ nor $H_1/e$.
% Since $G=H-e$ or $G=H/e$ and $e\in E(H_1)$, 
% $e$ must be a bridge in $H_1$ and $H_1-e$ consists of two connected components, one of which is $G_1$.
% So $G_1$ is obtained from $H_1$ by first removing some edges keeping connectivity and then removing degree-one vertices sequentially.
% Hence,  $\lambda(G)=\lambda(G_1)\leq \lambda(H_1)\leq \lambda(H)$ follows from Lemma~\ref{lem:edge_removal} and Lemma~\ref{lem:leaf_removal}.
% \end{proof}

%Theorem~\ref{thm:Kt} given in the introduction is an example of Theorem~\ref{thm:injective}. 
We are now ready to prove Theorem~\ref{thm:Kt}
\begin{proof}[Proof of Theorem~\ref{thm:Kt}]
Consider the $d$-dimensional tensegrity realization $(K_{d+2},\sigma,p)$ of $K_{d+2}$ given in Example 3.
In this realization, for each $v$, the affine dimension of $\{p(u): u\in N_{K_{d+2}}(v)\cup \{v\}\}$ is $d$.
Hence, by Lemma~\ref{lem:split deg}, the equilibrium stress $\omega$ given in Example 3 is non-splittable.
So the statement follows from Theorem~\ref{thm:injective} by setting $H=K_{d+2}$.
\end{proof}

\section{Characterizing Multigraphs of Super Stable Tensegrities}\label{sec:characterization_lambda}
In this section we shall give a characterization of multigraphs which can be realized as three-dimensional super stable tensegrities. 
This can be done by establishing an exact relation between $\lambda$ and the Colin de Verdi\`{e}re number $\nu$.

This section is organized as follows.
In Subsection~\ref{subsec:colin} we give a formal definition of the Colin de Verdi\`{e}re number $\nu$.
In Subsection~\ref{subsec:coning} we introduce the coning operation, which is the basic tool for linking $\lambda$ and $\nu$.
In Subsection~\ref{subsec:slicing} we explain the slicing and sliding theorem by Connelly, Gortler and Theran, which is the key ingredient for the exact relation between $\lambda$ and $\nu$.
In Subsection~\ref{subsec:characterization}, we put all observations together 
and derive the combinatorial characterization as a corollary.

\subsection{Colin de Verdi\`{e}re number $\nu$}\label{subsec:colin}
Roughly speaking, the Colin de Verdi\`{e}re number $\nu$ can be defined by replacing Laplacian matrices with adjacency matrices in (\ref{eq:def_L}).
For a formal definition, we need some notation.
For a multigraph $G$, let
\begin{align*}
{\cal A}^*(G)=\left\{A\in {\cal S}^n \mid \begin{array}{ll} 
A[i,j]= 0 & \text{if $G$ has no  edge between $i$ and $j$ with $i\neq j$} \\
A[i,j]\neq 0 & \text{if $G$ has a single edge between $i$ and $j$} \\
A[i,j]\in  \mathbb{R} & \text{if $G$ has parallel edges between $i$ and $j$ or $i=j$}
\end{array} \right\},
\end{align*}
where ${\cal S}^n$ denotes the set of $n\times n$ symmetric matrices.
The set of $n\times n$ symmetric matrices of rank $k$ is denoted by ${\cal S}^n_k$.
% The (Euclidean) closure of ${\cal A}^*(G)$ is 
% \begin{align*}
% {\cal L}(G)=\left\{L\in {\cal L}^n \mid \begin{array}{ll} 
% L[i,j]= 0 & \text{if $G$ has no  edge between $i$ and $j$} \\
% L[i,j]\in  \mathbb{R} & \text{if $G$ has an  edge between $i$ and $j$}
% \end{array} \right\},
% \end{align*}

%Then the analogue of Proposition~\ref{prop:conic} is given as follows,
%see, e.g.,~\cite{H02,L}.

% \begin{proposition}\label{prop:SAP}
%     Let $G$ be a graph with $n$ vertices. The followings are equivalent for $A\in {\cal A}^*(G)\cap {\cal S}^n_{n-d}$:
%     \begin{description}
%     \item[(i)] ${\cal A}^*(G)$ and ${\cal S}^n_{n-d}$ intersect transversally at $A$ in the space ${\cal S}^n$ of symmetric matrices.
%     \item[(ii)] There is no nonzero $X\in {\cal S}^n$ satisfying $X A=0$ and $\langle X, e_i e_j^{\top}\rangle=0$ for every $i,j$ with $i=j$ or $ij\in E(G)$.
%     \item[(iii)] Let $p:V(G)\rightarrow \mathbb{R}^d$ be a kernel representation of $A$.
%     Then there is no nonzero $d\times d$ symmetric matrix $S$  satisfying
%     $p_i^{\top} S p_j=0$ for every $i,j$ with $i=j$ or $ij\in E(G)$.
%     \end{description}
% \end{proposition}

%The property (ii) in Proposition~\ref{prop:SAP} is well known as the {\em strong Arnold property (SAP)}.

We say that a matrix $A\in {\cal A}^*(G)$ satisfies the {\em Strong Arnold Property (SAP)} if there is no nonzero $X\in {\cal S}^n$ satisfying $X A=0$ and $\langle X, \bm{e}_i \bm{e}_j^{\top}\rangle=0$ for every $i,j$ with $i=j$ or $ij\in E(G)$.
Observe the similarity between the SAP and the Euclidean SAP introduced in Proposition~\ref{prop:conic}.

%We are now ready to define the Colin de Verdi\`{e}re number.
For a multigraph $G$, the {\em Colin de Verdi\`{e}re number} $\nu$ is defined by
\begin{equation}\label{eq:nu}
        \nu(G)=\max\left\{\dim \ker A: \begin{array}{l} A\in {\cal A}^*(G) \\ A\succeq 0 \\ \text{$A$ satisfies the SAP} \end{array}\right\}.
\end{equation}
This graph parameter was first introduced by Coin de Verdi\`{e}re~\cite{C98} for connected simple graphs and extended to multigraphs by van der Holst~\cite{H96,H02}.
Our idea of using multigraphs to investigate the combinatorics of tensegrities was inspired by the work of van der Holst.
Note that Coin de Verdi\`{e}re~\cite{C90} also introduced a different graph parameter $\mu$, which is more widely recognized.
The Colin de Verdi\`{e}re number $\nu$ is also referred to as the algebraic width in \cite{L}. 

\subsection{Coning: bridging $\lambda$ and $\nu$}\label{subsec:coning}
A key tool to bridge between $\lambda$ and $\nu$ is the so-called coning operation. 
A connection between $\lambda$ and $\nu$ through coning was implicit in the work by Laurent and Varvitsiotis~\cite{LV}, and we will make it explicit in this subsection.

Let $G$ be a multigraph.
The multigraph obtained from $G$ by adding a new vertex $v_0$ and new parallel edges between $v_0$ and each vertex in $G$ is called the {\em cone} of $G$, and denoted it by $\nabla G$.
The new vertex $v_0$ is called the {\em cone vertex}.

The following lemma relates $\lambda$ with $\nu$. A slightly weaker statement  can be found in \cite[Lemma 4.11]{LV}.
\begin{lemma}\label{lem:coning_nu}
For any multigraph $G$, $\lambda(\nabla G)=\nu(G)$.
\end{lemma}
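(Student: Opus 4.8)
The plan is to relate the two optimization problems \eqref{eq:def_L} and \eqref{eq:nu} via a Schur-complement correspondence at the cone vertex $v_0$. First I would establish the inequality $\lambda(\nabla G)\geq \nu(G)$. Take a matrix $A\in{\cal A}^*(G)\cap{\cal S}^n_{n-(d+1)}$ with $A\succeq 0$ and the SAP, where $d=\dim\ker A-1 = \nu(G)-1$. I would build a Laplacian $L$ of $\nabla G$ on vertex set $\{v_0\}\cup V(G)$ by choosing the diagonal entry at $v_0$ to be a large positive number $r$, the off-diagonal block from $v_0$ to $V(G)$ to be (a sign-flipped copy of) a vector $s$ chosen so that the Schur complement $L/v_0$ equals $A$ plus the appropriate diagonal correction; concretely, given the row/column sums that a Laplacian must satisfy, $s$ and the diagonal adjustment are forced once one writes out $L = \begin{bmatrix} r & -s^\top \\ -s & D + A\end{bmatrix}$ with $D$ the diagonal matrix making each row of $L$ sum to zero. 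Because $\nabla G$ has \emph{parallel} edges from $v_0$ to every vertex of $G$, the entries of the block $-s$ are unconstrained in sign and magnitude, and the entries of $A$ (being a matrix in ${\cal A}^*(G)$ with arbitrary diagonal) are exactly what a Laplacian block supported on $G$'s edges, plus a free diagonal, can realize — so $L\in{\cal L}^*(\nabla G)$. By Proposition~\ref{prop:schur rank}, $L\succeq 0$ (for $r>0$ large) and $\dim\ker L=\dim\ker(L/v_0)=\dim\ker A = d+1$, so $\lambda(\nabla G)\geq\dim\ker L - 1 = d$. What remains for this direction is to transfer the SAP of $A$ to the Euclidean SAP of $L$ (Condition (ii) of Proposition~\ref{prop:conic}); this is the step where one uses that taking the Schur complement at $v_0$ is a diffeomorphism between a neighborhood of $L$ in ${\cal L}(\nabla G)$ (restricted to matrices with nonzero $v_0$-diagonal) and a neighborhood of $A$ in the affine space of symmetric matrices with the support pattern of ${\cal A}(G)$, and that this diffeomorphism matches up the relevant rank strata, hence matches transversality to transversality.

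For the reverse inequality $\lambda(\nabla G)\leq\nu(G)$, I would run the same correspondence backwards. Let $L\in{\cal L}^*(\nabla G)$ with $L\succeq 0$, $L$ satisfying the Euclidean SAP, and $\dim\ker L = \lambda(\nabla G)+1 =: d+1$. The diagonal entry of $L$ at $v_0$ is strictly positive (it equals a sum of squared-type weights; more carefully, if it were zero then positive semidefiniteness forces the whole $v_0$-row to vanish, contradicting that $\nabla G$ has edges at $v_0$ and $L\in{\cal L}^*$ requires — well, parallel edges allow net zero, so I would instead handle the degenerate case separately, or note that among certificates one can always perturb to make $L[v_0,v_0]>0$ using the freedom in distributing parallel-edge weights). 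Assuming $L[v_0,v_0]>0$, form $A' := L/v_0$; by Proposition~\ref{prop:schur rank}, $A'\succeq 0$ with $\dim\ker A' = d+1$, and by the structure of \eqref{eq:contraction2}–\eqref{eq:contraction3} specialized to the coning situation (here the ``$X$'' set is all of $V(G)$), $A'$ has the support pattern required of a member of ${\cal A}^*(G)$ — the off-diagonal entries of $A'$ on edges of $G$ are nonzero (when $G$ has a single edge there) because the corresponding Laplacian entry of $L$ is nonzero and the Schur correction is diagonal, and the diagonal of $A'$ is free. So $A'\in{\cal A}^*(G)\cap{\cal S}^n_{n-(d+1)}$ with $A'\succeq 0$, giving $\nu(G)\geq d+1$ once we check the SAP of $A'$, which again follows from the Euclidean SAP of $L$ by the same Schur-complement-is-a-local-diffeomorphism argument, now used in the opposite direction.

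The main obstacle, and the step I would spend the most care on, is the transfer of the non-degeneracy condition (SAP $\leftrightarrow$ Euclidean SAP) across the Schur complement. The clean way is to invoke the transversality formulation: Proposition~\ref{prop:conic}(i) says the Euclidean SAP of $L$ is equivalent to ${\cal L}(\nabla G)$ meeting ${\cal L}^n_{n-(d+1)}$ transversally at $L$ in ${\cal L}^n$ (note: here $n$ should be $|V(\nabla G)| = |V(G)|+1$, so one must be careful with indices); the analogous statement for $\nu$ (which van der Holst and others prove, and which the paper can cite alongside Proposition~\ref{prop:conic}) says the SAP of $A'$ is equivalent to ${\cal A}(G)$ meeting ${\cal S}^{|V(G)|}_{|V(G)|-(d+1)}$ transversally at $A'$. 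The Schur-complement map $\phi: L\mapsto L/v_0$, defined on the open set $\{L[v_0,v_0]\neq 0\}$, is smooth with smooth inverse on its image (the inverse being the ``re-coning'' map, exactly the reversal used in the proof of Lemma~\ref{lem:edge_contraction} via \eqref{eq:bridging_stress}), it sends ${\cal L}(\nabla G)\cap\{L[v_0,v_0]\neq 0\}$ onto the affine slice of ${\cal A}(G)$-patterned matrices, and — crucially — it sends the corank-$(d+1)$ stratum to the corank-$(d+1)$ stratum by Proposition~\ref{prop:schur rank}. A diffeomorphism carries transverse intersections to transverse intersections, so the two transversality conditions are equivalent, closing both inequalities simultaneously. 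The only genuinely fiddly points are (a) the bookkeeping that the support pattern of ${\cal L}(\nabla G)$ Schur-complemented at $v_0$ is precisely the pattern of ${\cal A}(G)$ — this uses that $\nabla G$ cones \emph{every} vertex so the rank-one corrections $\omega_X\omega_X^\top$ land on all of $V(G)\times V(G)$ and only contribute a diagonal shift plus, wait, actually an off-diagonal shift too; one must verify that on a single edge $ij$ of $G$ the net entry stays nonzero, which holds for generic/all choices or can be arranged — and (b) the degenerate case $L[v_0,v_0]=0$, handled by a perturbation argument within the parallel-edge freedom. I expect (a) to be the place where the ``multigraph'' framework of the paper is doing real work, since it is exactly the parallel edges at $v_0$ that make the sign pattern of the $s$-block unconstrained and let ${\cal A}^*(G)$'s free diagonal be absorbed.
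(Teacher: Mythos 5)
Your plan is to set up a Schur-complement correspondence at the cone vertex $v_0$, but this runs into two structural problems that the paper's own proof carefully sidesteps.

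First, there is a corank mismatch. By Proposition~\ref{prop:schur rank}, taking the Schur complement at $v_0$ preserves $\dim\ker$. But the lemma you are proving forces $\dim\ker L = \lambda(\nabla G)+1 = \nu(G)+1$ for an optimal Laplacian $L$ of $\nabla G$, whereas an optimal matrix $A$ for $\nu(G)$ has $\dim\ker A = \nu(G)$. So the corank must \emph{increase by one} in passing from $A$ to $L$, which a Schur-complement correspondence cannot do. Indeed your first paragraph concludes $\lambda(\nabla G)\geq \nu(G)-1$ and your second concludes $\nu(G)\geq \lambda(\nabla G)+1$; if both held you would be proving $\lambda(\nabla G)=\nu(G)-1$, not the statement. (This off-by-one is not a bookkeeping slip you can patch by relabelling $d$; it is the Schur complement itself pointing in the wrong direction.)

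Second, the support pattern does not work out. You write that the Schur complement of your ansatz $L=\begin{bmatrix} r & -s^\top \\ -s & D+A\end{bmatrix}$ is ``$A$ plus the appropriate diagonal correction,'' and later that the Schur correction is diagonal. It is not: $L/v_0 = (D+A)-\tfrac{1}{r}ss^\top$, and the rank-one term $\tfrac{1}{r}ss^\top$ hits every off-diagonal entry, including the non-edges of $G$. Forcing $ss^\top$ to be diagonal forces $s$ to have at most one nonzero entry, and then $L/v_0 = A-\diag(A\mathbf{1})$, whose kernel has no relation to $\ker A$. In the other direction, the Schur complement of a general $L\in{\cal L}^*(\nabla G)$ is a Laplacian of the \emph{complete} graph on $V(G)$ (the situation of \eqref{eq:contraction2}--\eqref{eq:contraction3}), not a member of ${\cal A}^*(G)$, because the rank-one correction puts nonzero entries on non-edges of $G$. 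You half-notice both of these issues mid-sentence (``plus, wait, actually an off-diagonal shift too'') but the proposed fix (``holds for generic choices'') does not resolve either the support-pattern failure or the corank mismatch.

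For reference, the paper's proof never takes a Schur complement. For $\lambda(\nabla G)\le\nu(G)$ it extracts the \emph{principal submatrix} $A=L[V(G),V(G)]$; this preserves the support pattern for free (no rank-one correction at all), and the corank drop by one comes from choosing a reduced kernel representation $p$ of $L$ with $p(v_0)=0$, which pins the all-ones vector to the $v_0$-coordinate and shows $\dim\ker A = \dim\ker L - 1$. For $\lambda(\nabla G)\ge\nu(G)$ it uses the explicit construction $L=\begin{pmatrix}\mathbf{1} & -I\end{pmatrix}^\top A\begin{pmatrix}\mathbf{1}&-I\end{pmatrix}$, whose $(V(G)\times V(G))$-block is literally $A$ (so $L\in{\cal L}^*(\nabla G)$ for free), and which has $\dim\ker L = \dim\ker A + 1$ because $\begin{pmatrix}\mathbf{1}&-I\end{pmatrix}$ is surjective with a one-dimensional kernel. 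The SAP transfer is then done not by transversality-is-preserved-by-a-diffeomorphism, but by van der Holst's kernel-representation characterization of the SAP together with $p(v_0)=0$, which makes the two conic-type conditions literally the same system of equations.
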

\begin{proof}
Let $d=\lambda(\nabla G)$, and let $L$ be a maximizer of (\ref{eq:def_L}) for  $\nabla G$. Then $\dim \ker L=d+1$ and $L\succeq 0$.
Let $A$ be the principal  submatrix of $L$ indexed by $V(G)$.
Then $A\succeq 0$ and $A\in {\cal A}^*(G)$.
We show that $A$ certifies $\nu(G)\geq d$.
To see this,  take a reduced kernel representation $p$ of $L$.
We may assume $p(v_0)=0$, where $v_0$ is the cone vertex of  $\nabla G$.
Then $p_{|V(G)}$ is a kernel representation of $A$,
and  $\dim \ker A= d$ follows.
Observe that the SAP of $A$ is equivalent to the Euclidean SAP of $L$.
This is because,  $A \in {\cal A}^*(G)$ satisfies the SAP if and only if there is no nonzero symmetric matrix $S \in \mathcal{S}^{d}$ satisfying $p(i)^\top S p(i)=0$ ($i\in V(G)$) and $p(i)^\top S p(j)$ ($ij \in E(G)$) (see~\cite[Theorem 4.2]{H02}), and the latter condition is equivalent to Proposition~\ref{prop:conic} (iii) as $p(v_0)=0$.
%the spherical conic condition of $(G,p_{|V(G)})$ is equivalent to  the (Euclidean) conic condition of $(\nabla G,p)$ by putting $p(v_0)=0$.
Thus, $A$ satisfies the SAP, and $\nu(G)\geq d$ follows.
 
 Conversely, let $d=\nu(G)$, and let $A$ be a maximizer of (\ref{eq:nu})  for $\nabla G$. Then $\dim \ker A=d$ and $A\succeq 0$.
 Let 
 \[
 L=\begin{pmatrix} \mathbf{1}^{\top} A \mathbf{1} & -\mathbf{1}^{\top} A \\
 -A \mathbf{1} & A \end{pmatrix}.
 \]
 Then $L\in \cL^*(\nabla G)$.
 Moreover, we have $L=\begin{pmatrix} \mathbf{1} & -I_n \end{pmatrix}^{\top} A \begin{pmatrix} \mathbf{1} & -I_n \end{pmatrix}$,
 implying that $L\succeq 0$ and $\dim \ker L=d+1$.
 The Euclidean SAP of $L$ can be checked by the same reason as the first case. 
 Thus,  $\lambda(\nabla G)\geq d$ follows.
\end{proof}

In view of Lemma~\ref{lem:coning_nu}, the next question is to understand a relation between $\lambda(G)$ and $\lambda(\nabla G)$.
The following relation is not difficult to check.

\begin{lemma}\label{lem:coning_lambda}
For any multigraph $G$, $\lambda(\nabla G)\geq \lambda(G)+1$.
\end{lemma}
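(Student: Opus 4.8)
The plan is to start from a $d$-dimensional super stable tensegrity realization $(G,\sigma,p)$ witnessing $\lambda(G)=d$, and to extend it to a $(d+1)$-dimensional super stable realization of $\nabla G$ by placing the cone vertex $v_0$ off the affine span of $p$. Concretely, I would take a certifying equilibrium stress matrix $L=L_{G,\omega}$ with $L\succeq 0$, $\dim\ker L=d+1$, and $L$ satisfying the Euclidean SAP, and I would normalize $p$ to be a reduced kernel representation of $L$ with the center of gravity at the origin. Then I would define a new configuration $\hat p$ on $V(G)\cup\{v_0\}$ in $\mathbb{R}^{d+1}$ by appending a zero in the last coordinate to each $p(i)$ for $i\in V(G)$, and setting $\hat p(v_0)=(\,\overline p,\,h)$ for a suitable point $\overline p$ in the span of $p$ and a nonzero height $h$; the natural choice is $\overline p = \frac{1}{n}\sum_{i} p(i)=0$ so that $\hat p(v_0)=(0,\dots,0,h)$, keeping the center of gravity issue manageable after a translation.

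The next step is to produce the stress. For each vertex $i\in V(G)$ I attach a pair of parallel edges between $i$ and $v_0$ with opposite signs; since the net interpoint stress between $i$ and $v_0$ can then be any real number while still being strictly proper (exactly the phenomenon highlighted in Example~4 of the excerpt), I am free to choose the net stress $\hat\omega(v_0 i)=c_i$ for any reals $c_i$. The equilibrium equation at $v_0$ reads $\sum_i c_i(\hat p(i)-\hat p(v_0))=0$; looking at the last coordinate this forces $\sum_i c_i=0$ (as $\hat p(i)$ has last coordinate $0$ and $\hat p(v_0)$ has last coordinate $h\neq0$), and looking at the first $d$ coordinates it forces $\sum_i c_i p(i)=0$, which holds automatically once $\sum_i c_i=0$ if we additionally shift so the weighted combination vanishes — simplest is to take all $c_i$ equal to some constant, but then $\sum c_i=0$ fails unless $c_i\equiv 0$; instead I would take $c_i$ to be, say, $1$ for one vertex and distribute $-1$ among the others proportionally, or more cleanly pick $c$ so that $L_{\nabla G,\hat\omega}=\begin{pmatrix} 0 & 0\\ 0 & L\end{pmatrix}$, i.e.\ take all net cone-stresses equal to $0$. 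With net cone-stress zero, the equilibrium at $v_0$ is trivially satisfied, the equilibrium at each $i\in V(G)$ is unchanged, and $L_{\nabla G,\hat\omega}=\begin{pmatrix} 0 & 0 \\ 0 & L\end{pmatrix}$, which is PSD with $\dim\ker = d+2$. So the stress condition for a $(d+1)$-dimensional realization is met provided the affine span of $\hat p$ is genuinely $(d+1)$-dimensional, which holds because $h\neq 0$.

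The remaining point is the conic condition for $(\nabla G,\hat p)$ in $\mathbb{R}^{d+1}$. Suppose $\hat S\in\mathcal S^{d+1}$ satisfies $(\hat p(i)-\hat p(j))^\top \hat S (\hat p(i)-\hat p(j))=0$ for all edges of $\nabla G$. Writing $\hat S$ in block form with respect to the splitting $\mathbb{R}^{d+1}=\mathbb{R}^d\oplus\mathbb{R}$, the edges inside $G$ (whose direction vectors all lie in $\mathbb{R}^d\times\{0\}$) force the top-left $d\times d$ block to annihilate all edge directions of $(G,p)$, hence by the conic condition for $(G,p)$ that block is $0$. The cone edges have direction $\hat p(i)-\hat p(v_0)=(p(i),-h)$, and plugging in with the top-left block zero yields a system in the remaining entries of $\hat S$ (the mixed block and the bottom-right scalar) that, using that $p$ affinely spans $\mathbb{R}^d$ and $h\neq 0$, forces $\hat S=0$. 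I expect this linear-algebra verification of the conic condition to be the main obstacle, since one must check the system has only the trivial solution; the key input is that the points $p(i)$ span $\mathbb{R}^d$ affinely together with the nonzero height $h$, which is exactly what guarantees the cone directions span $\mathbb{R}^{d+1}$. Once the conic condition is confirmed, $(\nabla G,\hat\sigma,\hat p)$ is a $(d+1)$-dimensional super stable tensegrity, so $\lambda(\nabla G)\geq d+1=\lambda(G)+1$, completing the proof.
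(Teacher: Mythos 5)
Your proof is correct and takes essentially the same route as the paper, which simply observes that the stress matrix $\begin{pmatrix} 0 & 0\\ 0 & L\end{pmatrix}$ (with zero net cone-stress realized by opposite-signed parallel edges) certifies $\lambda(\nabla G)\geq d+1$; your verification of the conic condition — zeroing the top-left block via the conic condition of $(G,p)$, then using that $p$ affinely spans $\mathbb{R}^d$ together with $h\neq 0$ to kill the remaining off-diagonal and corner entries of $\hat S$ — is exactly the intended ``direct check.''
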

\begin{proof}
This follows from the fact that the coning operation preserves 
the super stability~\cite[Theorem 4.6]{CGT}.
Alternatively, one can directly check that, for  
a maximizer $L$ of (\ref{eq:def_L}) for $G$ with $d=\dim \ker L$, 
the matrix $\begin{pmatrix} 0 & 0 \\ 0 & L\end{pmatrix}$
certifies $\lambda(\nabla G)\geq d+1$.
\end{proof}

\subsection{Slicing and Sliding}\label{subsec:slicing}
By Lemmas~\ref{lem:coning_nu} and~\ref{lem:coning_lambda}, it follows that $\lambda(G)\leq \lambda(\nabla G)-1=\nu(G)-1$.
The key ingredient to prove the converse relation is an observation by Connelly, Gortler, and Theran~\cite{CGT} that the super stability  is preserved by sliding and slicing operations of coned bar-joint frameworks.

Since we have to take care of the existence of proper stresses in the tensegrity case, let us look at the construction of  Connelly, Gortler, and Theran for completeness. 
For a given tensegrity realization $(\nabla G,\sigma, p)$ of $\nabla G$ with the cone vertex at the origin, suppose that 
\begin{equation}\label{eq:assumption}
\text{no point of $(\nabla G, \sigma, p)$ except the cone vertex lies at the origin. }
\end{equation}
A {\em sliding} of $(\nabla G, \sigma, p)$ is a tensegrity $(\nabla G,\sigma', q)$ such that 
\begin{itemize}
    \item $q(v)=s_v p(v)$ for some non-zero scalar $s_v$ for each $v\in V(G)$ (and the cone vertex remains at the origin), and 
    \item $\sigma'(uv)={\rm sign}(s_u) \cdot {\rm sign}(s_v) \cdot \sigma(uv)$ for every $e=uv\in E(G)$ and the sign of each edge incident to the cone vertex is unchanged.
\end{itemize}
Suppose $(\nabla G,\sigma',q)$ is obtained from $(\nabla G,\sigma,p)$ by $q(v)=s_v p(v)$ for $v\in V(G)$, 
and suppose $L$ is a Laplacian matrix of $\nabla G$ which certifies the super stability of  $(\nabla G, \sigma, p)$.
Denote the cone vertex by $v_0$ and $V(G)=\{v_1,\dots, v_n\}$, and consider 
\begin{equation}\label{eq:slicing}
L':= 
\begin{pmatrix} 1 & 0 & 0  & \dots &  0 \\ 
1-\frac{1}{s_{v_1}} & \frac{1}{s_{v_1}} & 0 & \dots & 0 \\
\vdots & 0  & \ddots  &  &  \vdots \\
\vdots & \vdots &  & \ddots & 0 \\
1-\frac{1}{s_{v_n}} & 0 & \vdots & 0 & \frac{1}{s_{v_n}} 
\end{pmatrix}^{\top}
L
\begin{pmatrix} 1 & 0 & 0  & \dots &  0 \\ 
1-\frac{1}{s_{v_1}} & \frac{1}{s_{v_1}} & 0 & \dots & 0 \\
\vdots & 0  & \ddots  &  &  \vdots \\
\vdots & \vdots &  & \ddots & 0 \\
1-\frac{1}{s_{v_n}} & 0 & \vdots & 0 & \frac{1}{s_{v_n}} 
\end{pmatrix},
\end{equation}
where we assume that the rows and the columns are indexed in the order $v_0, v_1, \dots, v_n$.
It can be rapidly checked that $L'\in {\cal L}^*(\Delta G)$.
Moreover, by Sylvester's law of inertia, 
$L'$ is positive semidefinite with $\rank L'=\rank L$, so $L'\in {\cal L}_{+,n-(d+1)}^n$.
Hence, $L'$ certifies the stress condition of $(\nabla G, \sigma', q)$.
A nontrivial observation due to Connelly, Gortler, and Theran~\cite{CGT} is that  $(\nabla G, \sigma', q)$ also satisfies the conic condition, and thus $(\nabla G, \sigma', q)$ is super stable.

%For our application, it is important to observe that the resulting stress  of $(\nabla G,q)$ is nonzero on a (non-coned) edge $e\in E(G)$ if and only if  so is on $(\nabla G,p)$.

Let   $(\nabla G, \sigma, p)$ be a $(d+1)$-dimensional coned tensegrity satisfying the assumption (\ref{eq:assumption}) and $H$ be a hyperplane in $\mathbb{R}^{d+1}$ not through the origin.
Then by a sliding operation we can convert $(\nabla G, \sigma, p)$ to a framework $(\nabla G, \sigma', q)$ such that all the points except the cone point lie on the hyperplane $H$. By identifying $H$ with $\mathbb{R}^d$, the sub-tensegrity $(G,q)$ obtained by removing the cone vertex can be regarded as a tensegrity in $\mathbb{R}^d$.  The resulting framework $(G,q)$ is called a {\em slicing} of $(\nabla G, \sigma, p)$.
Since all the non-cone point lie on $H$ in $(\nabla G, \sigma', q)$, the (net) stress $\omega'$ of $(\nabla G, \sigma', q)$ obtained in the construction  (\ref{eq:slicing})  is zero along each coned edge. 
This implies that the restriction of $\omega'$ to $E(G)$ is also an equilibrium stress of $(G, \sigma', q)$. 
Connelly, Gortler, and Theran showed that the resulting stress certifies the super stability of $(G, \sigma', q)$. 
Rephrasing the results explained so far in our terminology we have the following.

\begin{theorem}[Connelly, Gortler, and Theran (adapted)]\label{thm:slicing} 
Let $G$ be a multigraph.
Suppose $\nabla G$ has a $(d+1)$-dimensional super stable tensegrity realization such that the cone vertex does not coincide with other points in the realization.
Then $G$ has a $d$-dimensional super stable tensegrity realization.
\end{theorem}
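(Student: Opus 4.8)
The plan is to follow the construction of Connelly, Gortler, and Theran sketched just above the statement, and verify that the tensegrity case goes through with proper stresses. First I would take a $(d+1)$-dimensional super stable tensegrity realization $(\nabla G, \sigma, p)$ of $\nabla G$ in which the cone vertex does not coincide with any other point. After a translation we may place the cone vertex at the origin, and then the hypothesis is exactly the assumption~(\ref{eq:assumption}) that no non-cone point lies at the origin. Choose a hyperplane $H \subset \mathbb{R}^{d+1}$ through none of the lines spanned by the cone vertex and the individual points $p(v)$ for $v \in V(G)$ — since $H$ is to be chosen generically and there are only finitely many such forbidden directions, such an $H$ exists, and it avoids the origin. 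For each $v \in V(G)$ let $s_v$ be the nonzero scalar with $s_v p(v) \in H$, and perform the sliding operation described above to obtain $(\nabla G, \sigma', q)$ with $q(v) = s_v p(v)$; all non-cone points now lie on $H$.

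Next I would record that sliding preserves super stability. By the computation in~(\ref{eq:slicing}) together with Sylvester's law of inertia, the transformed Laplacian $L'$ lies in ${\cal L}^*(\nabla G)$, is positive semidefinite, and has the same rank as the certifying Laplacian $L$ of $(\nabla G,\sigma,p)$; hence $L' \in {\cal L}^n_{+, n-(d+2)}$ certifies the stress condition for $(\nabla G, \sigma', q)$, and by Proposition~\ref{prop:condition1} the stress $\omega'$ read off from $L'$ is strictly proper. The conic condition for $(\nabla G, \sigma', q)$ is the nontrivial input from~\cite{CGT} that we are permitted to cite. So $(\nabla G, \sigma', q)$ is super stable. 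Now restrict to the non-cone part: since every $q(v)$ for $v \in V(G)$ lies on the hyperplane $H$, the net stress $\omega'$ along each edge joining the cone vertex to $V(G)$ equals zero (this is the content of the equilibrium condition~(\ref{eq:equilibrium}) projected onto the normal direction of $H$, exactly as explained in the text). Consequently the restriction $\omega'|_{E(G)}$ satisfies the equilibrium condition~(\ref{eq:equilibrium}) for the sub-tensegrity $(G, \sigma', q)$ obtained by deleting the cone vertex, where we identify the affine hyperplane $H$ with $\mathbb{R}^d$.

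It remains to check that this restricted data certifies super stability of $(G, \sigma', q)$ as a $d$-dimensional tensegrity. For the stress condition, one passes from $L'$ to its principal submatrix $L''$ indexed by $V(G)$: because $\omega'$ is zero on the coned edges, $L''$ is precisely the weighted Laplacian $L_{G, \omega'|_{E(G)}}$, it inherits positive semidefiniteness, and a short linear-algebra argument (using that the kernel of $L'$ is spanned by the all-ones vector together with the coordinate rows of the $(d+1)$-dimensional configuration $q$, whose images in $H\cong\mathbb{R}^d$ have a $(d+1)$-dimensional kernel after reduction) shows $\dim\ker L'' = d+1$; the stress $\omega'|_{E(G)}$ is strictly proper since $\omega'$ was. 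For the conic condition on $(G, q)$, one uses that the edge directions of $(G,q)$ in $\mathbb{R}^d$ are obtained from those of $(\nabla G, q)$ in $\mathbb{R}^{d+1}$ by the affine identification $H \cong \mathbb{R}^d$, so a conic at infinity for $(G,q)$ would pull back to one for $(\nabla G, q)$, contradicting the conic condition for $(\nabla G,\sigma',q)$. Thus $(G, \sigma', q)$ is a $d$-dimensional super stable tensegrity realization, as required.

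The main obstacle — and the reason the statement is phrased as ``adapted'' — is not the sliding/slicing bookkeeping, which is essentially linear algebra, but the verification that the conic condition survives the sliding operation; this is the genuinely nontrivial geometric fact established in~\cite{CGT}, and I would simply invoke it rather than reprove it. A secondary subtlety worth being careful about is that sliding with a negative scalar $s_v$ flips signs of edges at $v$, which is why the sign function must be updated to $\sigma'$ as in the definition; one should confirm that after this update the recovered stress is still strictly proper, which follows because the entry-level sign changes in $L'$ induced by the diagonal scaling in~(\ref{eq:slicing}) match exactly the sign changes in $\sigma'$.
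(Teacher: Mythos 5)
You follow the paper's own route (slide onto a hyperplane $H$, cite \cite{CGT} for the conic condition of the slid tensegrity, observe the stress on coned edges vanishes, pass to the principal submatrix), and most steps are correct — including the observation that the $v_0$-th row and column of $L'$ become zero, so $L''$ really is a Laplacian supported on $G$ with $\dim\ker L'' = d+1$. The gap is in your self-contained argument for the conic condition of the slicing $(G,\sigma',q)$.

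You claim that a conic at infinity $S\in\mathcal{S}^d$ for $(G,q)$ in $H\cong\mathbb{R}^d$ ``pulls back'' to a conic at infinity for $(\nabla G,\sigma',q)$ in $\mathbb{R}^{d+1}$. It does not. Choose coordinates so that $H = \{x : x_{d+1}=1\}$ and write $q(i)=(q'(i),1)$. Any extension of $S$ to $\tilde S\in\mathcal{S}^{d+1}$, say $\tilde S = \left(\begin{smallmatrix} S & \alpha \\ \alpha^\top & \beta \end{smallmatrix}\right)$, automatically kills $(q(i)-q(j))^\top\tilde S(q(i)-q(j))$ for $ij\in E(G)$ because those difference vectors lie in the hyperplane direction. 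But the cone edges $v_0i$ contribute the additional constraints $q(i)^\top\tilde S\,q(i)= q'(i)^\top S q'(i) + 2\alpha^\top q'(i) + \beta = 0$ for every $i\in V(G)$ — that is $|V(G)|$ equations in the $d+1$ free parameters $(\alpha,\beta)$, which are not satisfiable in general and are certainly not implied by $S$ being a conic at infinity for $(G,q)$. So the existence of $S$ gives no contradiction with the conic condition for $(\nabla G,\sigma',q)$. This is exactly the nontrivial geometric content of \cite{CGT}: the paper cites Connelly, Gortler, and Theran not only for the conic condition after sliding but also, separately, for the fact that the slicing's stress certifies super stability of $(G,\sigma',q)$. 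You should cite \cite{CGT} for this final step rather than attempt the pull-back; the ``adapted'' qualifier in the theorem title refers to checking that the sign/properness bookkeeping for tensegrities goes through (which you do correctly), not to a new proof of the conic-condition transfer.
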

The condition on the realization of $\nabla G$ in the statement of Theorem~\ref{thm:slicing} is due to the assumption (\ref{eq:assumption}). We now discuss how to deal with this assumption.

\begin{lemma}\label{lem:coned_coincident}
Let $(\nabla G, \sigma, p)$ be a  $(d+1)$-dimensional super stable tensegrity with the cone vertex $v_0$,
and let $X=\{v\in V(G): p(v)=p(v_0)\}$.
Then the tensegrity obtained from $(\nabla G, \sigma, p)$ by removing $X$ is super stable.
\end{lemma}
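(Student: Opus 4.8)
The plan is to produce a certifying stress matrix for the reduced tensegrity by \emph{folding} the vertices of $X$ into the cone vertex $v_0$, and then to observe that the conic condition is unaffected by this folding precisely because $\nabla G$ carries parallel edges from $v_0$ to every vertex.

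After a translation assume $p(v_0)=\bm{0}$, so $p(x)=\bm{0}$ for all $x\in X$; put $Y=V(G)\setminus X$ and $m=|Y|+1$, and note that deleting $X$ from $\nabla G$ yields exactly $\nabla(G-X)$ on the vertex set $\{v_0\}\cup Y$, whose restricted configuration still has affine span of dimension $d+1$ because the deleted points all sat at the origin. Let $L\in\cL^*(\nabla G)$ certify super stability of $(\nabla G,\sigma,p)$; replacing $p$ by an affinely equivalent configuration (which preserves super stability) we may assume $p$ is a reduced kernel representation of $L$, so $L\succeq 0$, $\dim\ker L=d+2$, and $L$ satisfies the Euclidean SAP. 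Let $\pi\colon\mathbb{R}^{\{v_0\}\cup Y}\to\mathbb{R}^{V(\nabla G)}$ be the injective linear map with $\pi\bm{e}_{v_0}=\bm{e}_{v_0}+\sum_{x\in X}\bm{e}_x$ and $\pi\bm{e}_y=\bm{e}_y$ for $y\in Y$, and set $M:=\pi^{\top}L\pi$. A routine entrywise check gives $M\in\cL^*(\nabla(G-X))$: the entries among $Y$ are inherited from $L$, while each entry $M[v_0,y]=L[v_0,y]+\sum_{x\in X}L[x,y]$ is unconstrained since $v_0$ and $y$ are joined by parallel edges in $\nabla(G-X)$; moreover $\pi\mathbf{1}=\mathbf{1}$ forces $M$ to be a Laplacian, and $M\succeq 0$ because $L\succeq 0$.

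The key computation is $\dim\ker M=d+2$. Since $L\succeq 0$, one has $\ker M=\{w:\pi w\in\ker L\}$, and the point is that \emph{every} vector of $\ker L={\rm span}(\mathbf{1},\,\text{rows of }P)$ is constant on $X\cup\{v_0\}$, because each coordinate row of $P$ vanishes on those indices; hence $\ker L\subseteq\range\pi$, and injectivity of $\pi$ yields $\dim\ker M=\dim\ker L=d+2$, that is, $M\in\cL^*(\nabla(G-X))\cap\cL_{+,m-(d+2)}^{m}$. Restricting the matrix with rows $P$ and $\mathbf{1}^{\top}$ to the columns $\{v_0\}\cup Y$ gives a kernel matrix of $M$ with last row $\mathbf{1}^{\top}$, so a reduced kernel representation $q$ of $M$ is affinely equivalent to $p$ restricted to $\{v_0\}\cup Y$.

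It remains to check that $M$ satisfies the Euclidean SAP; by Proposition~\ref{prop:conic} and the affine equivalence above this amounts to the conic condition for $(\nabla(G-X),\,p|_{\{v_0\}\cup Y})$, and I claim its set of conic constraints coincides with that of $(\nabla G,p)$. Indeed every edge of $\nabla G$ incident to $X$ is either an edge $v_0x$ or $xx'$ with $x,x'\in X$, which has zero length and imposes no constraint, or an edge $xy$ with $x\in X$ and $y\in Y$, whose direction $p(x)-p(y)=-p(y)$ equals, up to sign, that of the edge $v_0y$ of $\nabla(G-X)$ and hence imposes the already-present constraint $p(y)^{\top}Sp(y)=0$. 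Thus the conic condition for $(\nabla G,p)$, which holds by super stability, passes to $(\nabla(G-X),p|)$. Proposition~\ref{prop:condition1} applied to $M$ now produces a strictly proper equilibrium stress realizing the stress condition, and together with the conic condition this shows that the reduced tensegrity is super stable. The only delicate bookkeeping is to make the stress strictly proper with respect to the \emph{inherited} sign function: on edges inside $Y$ we reuse the original stress, while on each parallel class $v_0y$ — a bar, i.e.\ a pair of oppositely signed edges — any prescribed net value, in particular $-M[v_0,y]$, is attained by a strictly proper assignment. I expect this last point, rather than the structural core (the nullity count for $M$ and the redundancy of the deleted conic constraints), to be the only genuinely fiddly step.
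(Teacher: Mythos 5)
Your proof is correct, and it reaches exactly the same certifying matrix as the paper's proof, but by a cleaner and more direct route. The paper removes the coincident vertices one at a time: for each $v\in X$ it first argues (using positive semidefiniteness and the fact that $p$ is a reduced kernel representation) that the diagonal entry $a=L[v,v]$ must be positive, then forms the Schur complement $L/v$, and finally adds back the rank-one term $\tfrac1a(b+a\chi_{v_0})(b+a\chi_{v_0})^{\top}$ to land in $\cL^*(\nabla(G-v))$, iterating until all of $X$ is gone. You instead conjugate once by the injective ``folding'' map $\pi$ sending $\bm e_{v_0}\mapsto\bm e_{v_0}+\sum_{x\in X}\bm e_x$ and $\bm e_y\mapsto\bm e_y$; a short computation shows that in the single-vertex case $\pi^{\top}L\pi$ coincides with the paper's $L_v=L'+b\chi_{v_0}^{\top}+\chi_{v_0}b^{\top}+a\chi_{v_0}\chi_{v_0}^{\top}$, so the two constructions agree. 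What your approach buys: you never need to establish $a>0$ (the expression $\pi^{\top}L\pi$ is always defined and automatically positive semidefinite), you dispense with the Schur-complement bookkeeping, and you handle all of $X$ simultaneously; the nullity count via $\ker L\subseteq\range\pi$ is also tidier than tracking the corank through an iterated Schur complement. The argument that the conic constraints are unchanged (each deleted constraint from an edge $xy$, $x\in X$, is a duplicate of the constraint for $v_0y$) makes explicit what the paper asserts in one line. The one place where you are slightly more cavalier than you perhaps should be is in asserting that each coning parallel class $v_0y$ is a bar, i.e.\ a pair of oppositely signed edges, so that any prescribed net stress on it is realizable strictly properly with respect to the \emph{inherited} $\sigma$; the paper's definition of $\nabla G$ only specifies parallel edges, not their signs, and the paper's own proof glosses over the same point, so this does not put you behind the paper's standard of rigor, but it is worth flagging as a shared looseness rather than as something you have actually verified.
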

\begin{proof}
Let $v\in X$. 
We first show that  $(\nabla G-v,\sigma, p)$ is super stable.
Let $L$ be a stress matrix of $(\nabla G,\sigma, p)$  that certifies the super stability of $(\nabla G,\sigma, p)$.
If the $v$-th diagonal entry of $\Omega$ is zero, then the $v$-th row and column are zero by the positive semidefiniteness of $L$.
Then the kernel of  $L$  contains the characteristic vector of $v$, implying that $p(v)$ is not at the origin, a contradiction.
%Then $\Omega - v$ certifies that the tensegrity obtained from  $(\nabla G,\sigma, p)$ by removing $v$ is super stable.
%(Note that the set of edge directions of the resulting tensegirty is equal to that of  $(\nabla G,\sigma, p)$, and hence the conic condition also follows.)

Hence, the $v$-th diagonal entry of $L$ is positive.
Suppose $L$ is written as $\begin{pmatrix} a & b^{\top} \\ b & L'\end{pmatrix}$, where we assume the first row/column is indexed by $v$. Then $a>0$.
We consider the Schur complement $L/v$ at the $v$-th diagonal,
which is $L/v=L' -\frac{1}{a} b b^{\top}$. 
Since $L$ is positive semidefinite, so is $L/v$ and $\rank L/v=\rank L -1$ by Proposition~\ref{prop:schur rank}.
Also the all-one vector is in the kernel of $L/v$
since $L/v \bm{1}=(L'-\frac{1}{a}bb^{\top})\bm{1}=\bm{0}$
by $L'\bm{1}=-b$ and $b^{\top}\bm{1}=-a$.

Let $v_0$ be the cone vertex, $\chi_{v_0}$ be the characteristic vector of $v_0$, and 
\[
L_v:= L/v+\frac{1}{a}(b+a \chi_{v_0})(b+a \chi_{v_0})^{\top}.
\]
Then  $L_v\in {\cal L}^*(\nabla (G-v))$, 
and moreover  it is the Laplacian of $\nabla (G-v)$ weighted by an equilibrium stress of $(\nabla G -v ,\sigma, p)$.
Since $L/v$ is positive semidefinite and $\dim \ker  L/v=\dim \ker  L$, 
$L_v$ is positive semidefinite with $\dim \ker L_v=\dim \ker L/v=\dim \ker  L$.
The conic condition clearly holds in $(\nabla G -v, p)$ since the set of edge directions does not change. 
Thus,  $(\nabla G -v ,\sigma, p)$ is super stable.

We can apply the same argument in the resulting tensegrity until we remove all the vertices  in $X$.
We finally obtain a Laplacian matrix that certifies the  super stability of $(\nabla G-X, \sigma, p)$.
\end{proof}

Combining Theorem~\ref{thm:slicing} and Lemma~\ref{lem:coned_coincident}, we obtain the following.
\begin{lemma}\label{lem:induced}
Let $G$ be a multigraph. Then $G$ has an induced subgraph $H$ such that $\lambda(H)\geq \lambda(\nabla G)-1$.
\end{lemma}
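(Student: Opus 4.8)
The plan is to read the desired induced subgraph off a witnessing realization of $\nabla G$ and then invoke the slicing theorem. Since $\nabla G$ is connected, fix a $(d+1)$-dimensional super stable tensegrity realization $(\nabla G,\sigma,p)$ where $d:=\lambda(\nabla G)-1$, and translate so that the cone vertex $v_0$ lies at the origin. Set $X:=\{v\in V(G):p(v)=p(v_0)\}$ and $H:=G-X$, an induced subgraph of $G$. A first, purely bookkeeping, observation is that deleting a set of \emph{non-cone} vertices from $\nabla G$ yields the cone of the correspondingly deleted graph, i.e.\ $\nabla G-X=\nabla H$.

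Next I would apply Lemma~\ref{lem:coned_coincident} to $(\nabla G,\sigma,p)$: it says that removing exactly the vertices placed at the cone point preserves super stability, so $(\nabla H,\sigma,p)$ (with $p$ restricted) is a super stable tensegrity in which, by the very choice of $X$, the cone vertex coincides with no other vertex. This is almost the hypothesis of Theorem~\ref{thm:slicing}; the only thing still to verify is that this realization of $\nabla H$ is genuinely $(d+1)$-dimensional and not of lower dimension.

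This dimension count is, I expect, the one slightly subtle step. Because the cone vertex sits at the origin, the affine span of $\{p(v):v\in V(\nabla G)\}$ coincides with the \emph{linear} span of $\{p(v):v\in V(G)\}$; the vertices we delete are precisely those with $p(v)=\bm{0}$, and the zero vector contributes nothing to a linear span, so the span is unchanged and the affine span of $\{p(v):v\in V(\nabla H)\}$ still has dimension $d+1$. Hence Theorem~\ref{thm:slicing} applies and produces a $d$-dimensional super stable tensegrity realization of $H$, giving $\lambda(H)\ge d=\lambda(\nabla G)-1$, as required.

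Finally I would dispose of degeneracies in one line: assuming $V(G)\neq\emptyset$, the graph $K_2^=$ is a minor of $\nabla G$, so Corollary~\ref{cor:monotonicity_lambda} together with Example 4 gives $\lambda(\nabla G)\ge 1$, whence $d\ge 0$ and the chosen realization of $\nabla G$ has affine dimension at least one; in particular some vertex of $G$ lies off the cone point, so $X\subsetneq V(G)$ and $H$ is a nonempty graph. The only genuinely substantive point is the dimension bookkeeping above; everything else is a direct application of Lemma~\ref{lem:coned_coincident} and Theorem~\ref{thm:slicing}.
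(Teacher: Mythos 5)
Your proof is correct and follows essentially the same route as the paper's: pick a $(d+1)$-dimensional super stable realization of $\nabla G$ witnessing $d=\lambda(\nabla G)-1$, delete the set $X$ of non-cone vertices coinciding with the cone point, apply Lemma~\ref{lem:coned_coincident} to preserve super stability of $\nabla H$ (using $\nabla G-X=\nabla H$), then slice via Theorem~\ref{thm:slicing}. The paper's version is terser — it does not explicitly verify the affine-span/dimension bookkeeping you carry out, nor the nondegeneracy of $X\subsetneq V(G)$ — so your additions are useful clarifications of steps the paper leaves implicit, but they do not constitute a different argument.
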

\begin{proof}
Let $d=\lambda(\nabla G)-1$, and let $(\nabla G, \sigma, p)$ be a $(d+1)$-dimensional super stable tensegrity realization.
Let $H$ be a subgraph of $G$ obtained by removing all the vertices in $X=\{v\in V(G): p(v)=p(v_0)\}$.
By Lemma~\ref{lem:coned_coincident}, $(\nabla H, \sigma, p)$ is super stable.
By Theorem~\ref{thm:slicing}, $H$ has a $d$-dimensional super stable tensegrity realization, implying $\lambda(H)\geq d$. 
\end{proof}

\subsection{Characterizing Multigraphs Having Super Stable Tensegrity Realizations} \label{subsec:characterization}
The following theorem is our main observation in this section.
\begin{theorem}\label{thm:equality}
For a multigraph $G$, $\lambda(G)=\nu(G)-1$.
\end{theorem}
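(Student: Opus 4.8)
The plan is to prove the equality $\lambda(G) = \nu(G) - 1$ by establishing the two inequalities separately, using the coning machinery developed in Subsections~\ref{subsec:coning} and~\ref{subsec:slicing}. First I would recall that Lemma~\ref{lem:coning_nu} gives $\lambda(\nabla G) = \nu(G)$ and Lemma~\ref{lem:coning_lambda} gives $\lambda(\nabla G) \geq \lambda(G) + 1$. Chaining these two facts immediately yields the easy direction: $\lambda(G) \leq \lambda(\nabla G) - 1 = \nu(G) - 1$. So the real content is the reverse inequality $\lambda(G) \geq \nu(G) - 1$.

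For the reverse inequality, the key is to use slicing to descend from a super stable realization of $\nabla G$ back down to one of $G$. By Lemma~\ref{lem:coning_nu}, $\lambda(\nabla G) = \nu(G)$, so $\nabla G$ has an $\nu(G)$-dimensional super stable tensegrity realization $(\nabla G, \sigma, p)$. The obstruction is that Theorem~\ref{thm:slicing} requires the cone vertex not to coincide with any other point, which need not hold for an arbitrary maximizer. This is exactly what Lemma~\ref{lem:induced} handles: applying it to $G$, we obtain an induced subgraph $H$ with $\lambda(H) \geq \lambda(\nabla G) - 1 = \nu(G) - 1$. Since $H$ is an induced subgraph of $G$, in particular $H$ is a minor of $G$, so by Corollary~\ref{cor:monotonicity_lambda} (minor monotonicity of $\lambda$) we get $\lambda(G) \geq \lambda(H) \geq \nu(G) - 1$. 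Combining the two inequalities gives $\lambda(G) = \nu(G) - 1$.

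A minor point to address carefully is the case analysis around connectivity: $\lambda$ for a general (possibly disconnected) multigraph is defined component-wise, and Corollary~\ref{cor:monotonicity_lambda} and Lemma~\ref{lem:induced} are already stated for general multigraphs, so no extra work is needed there; one should just note that $\nu$ is likewise well-behaved (it is minor-monotone by van der Holst's results, and for the equality it suffices that both sides reduce to the corresponding statement on the component achieving the maximum). The main obstacle in the whole argument is really the one already dealt with in the preceding lemmas — namely that the slicing construction of Connelly, Gortler, and Theran needs the non-coincidence hypothesis~(\ref{eq:assumption}), and that the stress in the tensegrity setting must remain strictly proper after slicing. Since those technical points are absorbed into Theorem~\ref{thm:slicing} and Lemmas~\ref{lem:coned_coincident} and~\ref{lem:induced}, the proof of Theorem~\ref{thm:equality} itself is short: it is simply the assembly of Lemma~\ref{lem:coning_nu}, Lemma~\ref{lem:coning_lambda}, Lemma~\ref{lem:induced}, and Corollary~\ref{cor:monotonicity_lambda} into the chain of inequalities above.
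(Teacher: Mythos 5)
Your proof is correct and follows essentially the same route as the paper: the easy inequality from Lemmas~\ref{lem:coning_nu} and~\ref{lem:coning_lambda}, and the reverse inequality by applying Lemma~\ref{lem:induced} to produce an induced subgraph $H$ with $\lambda(H)\geq\lambda(\nabla G)-1$ and then invoking minor monotonicity (Corollary~\ref{cor:monotonicity_lambda}). The remark about connectivity is a harmless addition; the core assembly of lemmas is identical to the paper's argument.
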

\begin{proof}
By Lemma~\ref{lem:coning_nu}, it suffices to show $\lambda(G)=\lambda(\nabla G)-1$.
By Lemma~\ref{lem:coning_lambda}, $\lambda(G)\leq \lambda(\nabla G)-1$ holds. 
Conversely, Corollary~\ref{cor:monotonicity_lambda} and by Lemma~\ref{lem:induced}, $G$ has a subgraph $H$ such that 
$\lambda(G)\geq \lambda(H)\geq \lambda(\nabla G)-1$.
Thus $\lambda(G)=\lambda(\nabla G)-1$ holds.
\end{proof}

Theorem~\ref{thm:characterization_super_stable} stated in the introduction is an immediate corollary of 
Theorem~\ref{thm:equality}. 

\begin{proof}[Proof of Theorem~\ref{thm:characterization_super_stable}]
In view of Theorem~\ref{thm:minor_monotone},
a multigraph $G$ does not admit a three-dimensional super stable tensegrity realization if and only if $\lambda(G)\leq 2$.
By  Theorem~\ref{thm:equality}, the latter condition is equivalent to $\nu(G)\leq 3$. 
Van der Holst~\cite{H02} has shown that 
$\nu(G)\leq 3$ if and only if $G$ has no multigraph in the list of Figure~\ref{fig:list} as a minor. Thus, the statement follows.
\end{proof}

The two-dimensional counterpart of Theorem~\ref{thm:characterization_super_stable} also follows from the characterization of multigraphs with $\nu(G)\leq 2$ due to van der Holst~\cite{H02}. We will discuss a stronger statement in Section~\ref{sec:characterization_rd}.

In rigidity applications, it is more important to restrict our attention to injective realizations. 
Currently we have the following sufficient condition.
\begin{theorem}
Let $G$ be a $2$-connected multigraph.
Suppose $G$ contains $K_5, Q_3$ or $K_{2,2,2}$ as a minor.
Then $G$ has a three-dimensional injective super stable tensegrity realization.
\end{theorem}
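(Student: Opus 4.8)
The plan is to reduce the statement to Theorem~\ref{thm:injective}. Since $K_5$, $Q_3$, and $K_{2,2,2}$ are each $2$-connected, it suffices to show that every one of these three graphs admits a three-dimensional injective super stable tensegrity realization carrying a non-splittable equilibrium stress; then, applying Theorem~\ref{thm:injective} with $H$ equal to whichever of the three is a minor of the given $2$-connected $G$, we obtain the claimed realization of $G$. Thus the whole argument collapses to a case analysis over three fixed small graphs.

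For $K_5$ I would simply invoke the case $d=3$ of Theorem~\ref{thm:Kt} together with its proof: the general-position realization of $K_{d+2}$ from Example~3 is injective and super stable, and the proof of Theorem~\ref{thm:Kt} already verifies, via Lemma~\ref{lem:split deg} and the facts that $K_5$ is $4$-regular and every closed vertex neighborhood affinely spans $\mathbb{R}^3$, that the accompanying equilibrium stress is non-splittable. For $Q_3$ and $K_{2,2,2}$ I would take the classical three-dimensional super stable realizations recorded in the tensegrity literature -- the dihedral star tensegrity of Figure~\ref{fig:tensegrity}(b) for $Q_3$ (cf.~\cite{ZGCO}) and the prism tensegrity of Figure~\ref{fig:tensegrity}(a) for $K_{2,2,2}$ (cf.~\cite{CG}) -- both of which are injective, and then establish non-splittability through Lemma~\ref{lem:split deg}. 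Here $Q_3$ is $3$-regular, so I only need the closed neighborhood of each vertex to affinely span dimension at least $2=|E_{Q_3}(v)|-1$; and $K_{2,2,2}$ is $4$-regular, where realizing it as a triangular antiprism, a vertex together with its four neighbors (the two other vertices of its triangular face and two vertices of the opposite face) affinely spans the plane of one face plus a point off it, i.e.\ dimension $3=|E_{K_{2,2,2}}(v)|-1$. Given these spanning facts, Lemma~\ref{lem:split deg} makes the strictly proper stress non-splittable, and Theorem~\ref{thm:injective} finishes the argument.

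I expect the only genuine work -- and the main obstacle -- to be the verification, for $Q_3$ and $K_{2,2,2}$, that a concrete super stable realization is injective and has all closed vertex neighborhoods of the required affine dimension; this is a finite computation on explicit coordinates and stress matrices, and it is where the proof leans on the known engineering constructions. (Mere existence of a three-dimensional super stable realization of each of these two graphs itself already follows from Theorem~\ref{thm:characterization_super_stable}, since $Q_3$ and $K_{2,2,2}$ each contain themselves, members of Figure~\ref{fig:list}, as minors.) If one prefers a self-contained route avoiding the references, I would instead start from any three-dimensional super stable realization of $Q_3$ (resp.\ $K_{2,2,2}$), take its certifying Laplacian $L$, and perturb $L$ inside ${\cal L}^*(\cdot)\cap {\cal L}^n_{n-4}$ exactly as in the proof of Lemma~\ref{lem:edge_removal} -- using the Euclidean SAP, i.e.\ the transversality in Proposition~\ref{prop:conic}, to stay on the positive semidefinite, rank-$(n-4)$ sheet -- so that the reduced kernel representation becomes injective with full-dimensional closed neighborhoods while super stability is retained (the conic condition surviving by Proposition~\ref{prop:conic_generic}), after which Lemma~\ref{lem:split deg} applies. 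Either way, the nontrivial content is confined to these two base graphs, everything else being a direct application of results already in hand.
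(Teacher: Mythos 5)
Your proposal matches the paper's proof: both reduce to Theorem~\ref{thm:injective} by exhibiting three-dimensional injective super stable realizations of $K_5$, $Q_3$, and $K_{2,2,2}$ (Example~3, the dihedral star, and the prism tensegrity, respectively) and verifying non-splittability via the affine-span condition of Lemma~\ref{lem:split deg}. You supply somewhat more detail on the affine-span verifications than the paper does, and the alternative perturbation route you sketch is extra, but the core argument is the same.
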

\begin{proof}
For $K_5, Q_3, K_{2,2,2}$, there are well known three-dimensional super stable tensegrity realizations:
The realization of $K_5$ is as given in Example 3;
The realization of $Q_3$ is known as a dihedral start-shaped tensegrity and it is illustrated in Figure~\ref{fig:tensegrity}(b);
The realization of $K_{2,2,2}$ is known as a prism tensegrity and it is illustrated in Figure~\ref{fig:tensegrity}(a).
Those realizations are injective and satisfy the assumption in Lemma~\ref{lem:split deg}. So they admit non-splittable equilibrium stresses.
Hence, the statement follows from Theorem~\ref{thm:injective}.
\end{proof}
Establishing a complete characterization remains open. We pose it as an open problem.
\begin{problem}\label{prob:injective}
Characterize the class of multigraphs which can be realized as a three-dimensional  injective super stable tensegrity.
\end{problem}

It should be noted that  Alfakih~\cite{A17} gave a characterization of a graph which can be realized as a $d$-dimensional super stable tensegrity whose point configuration is in  general position. 

For generic tensegrities (where point configurations are generic), characterizations of the graphs of rigid or globally rigid tensegrities have been studied in~\cite{G21,JJK,JRS}.

\section{Relation Among Graph Parameters}\label{sec:relation}
Let $G$ be a multigraph.
A vertex subset $X$ in $G$ is called a {\em clique} if the simplified graph of the subgraph induced by $X$ is complete. 
A {\em maximal clique} is a clique which is not a proper subset of a  clique in $G$.
For a multigraph $G$, let $\omega(G)$ be the maximum size of a clique in $G$.
Also, let $\kappa(G)$ be the maximum vertex connectivity\footnote{In this paper, any $k$-connected graph $H$ satisfies $|V(H)|\geq k+1$ by definition.} over all minors of $G$,
and ${\rm tw}(G)$ be the treewidth of $G$. 
(See Section~\ref{sec:characterization_rd} for the definition.)

$\nu(G)$ and ${\rm rd}(G^=)$ have been studied extensively and relations to other combinatorial graph parameters are known. 
The following statement summarizes the current status.
\begin{corollary}\label{cor:basic_relation}
For a multigraph $G$, 
\[
\omega(G)-2 \leq \kappa(G)-1 \leq \nu(G)-1= \lambda(G)\leq {\rm rd}(G)\leq {\rm rd}(G^=)\leq {\rm tw}(G).
\]
\end{corollary}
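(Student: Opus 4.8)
The plan is to obtain the chain by concatenating six (in)equalities, of which the two middle ones are already available in the excerpt and the rest are either elementary, a short direct argument, or a citation. Reading outward from the centre: the equality $\nu(G)-1=\lambda(G)$ is exactly Theorem~\ref{thm:equality}, and $\lambda(G)\le\rd(G)$ is Proposition~\ref{prop:rd_lambda}. It therefore remains to establish $\omega(G)-2\le\kappa(G)-1$, $\kappa(G)-1\le\nu(G)-1$, $\rd(G)\le\rd(G^=)$, and $\rd(G^=)\le\tw(G)$.

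First I would dispose of $\omega(G)-2\le\kappa(G)-1$. If $X$ is a clique of size $t=\omega(G)$, then $K_t$ is a minor of $G$ (take the subgraph induced by $X$ and delete parallel copies of edges), and $\kappa(K_t)=t-1$, so $\kappa(G)\ge t-1$, i.e.\ $\kappa(G)-1\ge\omega(G)-2$. For $\kappa(G)-1\le\nu(G)-1$, equivalently $\kappa(G)\le\nu(G)$: let $H$ be a $\kappa(G)$-connected minor of $G$ (one exists by the definition of $\kappa$); by the minor monotonicity of $\nu$ (van der Holst~\cite{H96}) we have $\nu(G)\ge\nu(H)$, so it suffices to invoke the known fact that a $k$-connected multigraph has $\nu\ge k$ --- equivalently, via Theorem~\ref{thm:equality}, that a $k$-connected multigraph admits a $(k-1)$-dimensional super stable tensegrity realization. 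I would cite this rather than reprove it.

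Next, $\rd(G)\le\rd(G^=)$. Put $D=\rd(G^=)$ and let $(G,\sigma,p)$ be any tensegrity realization of $G$. For each edge $uv$ of $\s(G)$, assign signs to the two parallel copies of $uv$ in $G^=$ as follows: both $+$ if the parallel class of $uv$ in $G$ consists only of cables, both $-$ if it consists only of struts, and one $+$ and one $-$ otherwise; call the resulting sign function $\sigma^=$. Then $(G^=,\sigma^=,p)$ is a realization of $G^=$, so it has a deformation $(G^=,\sigma^=,q)$ of affine dimension at most $D$. A direct check of the defining inequalities shows that $(G,\sigma,q)$ is then a deformation of $(G,\sigma,p)$: an edge of $G$ with sign $+$ (resp.\ $-$) lies in a parallel class whose image in $G^=$ contains a $+$-copy (resp.\ a $-$-copy), which yields the required distance inequality. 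Hence every realization of $G$ has a deformation of dimension at most $D$, so $\rd(G)\le D$. Finally, $\rd(G^=)\le\tw(G)$ follows from the theorem of Belk and Connelly~\cite{BC} that the (bar-joint) realizable dimension of a simple graph is at most its treewidth, applied to $\s(G)$, together with $G^==(\s(G))^=$ and $\tw(G)=\tw(\s(G))$.

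The step I expect to be the real bottleneck is $\kappa(G)-1\le\nu(G)-1$: unlike the others it is neither elementary nor established in the present paper, so the argument must rest on locating and citing the correct statement that every $k$-connected (multi)graph has $\nu\ge k$. A recurring minor point throughout is to keep straight the distinction between a multigraph $G$, its simplification $\s(G)$, and $G^=$: the parameters $\rd$, $\nu$ and $\lambda$ are sensitive to edge multiplicities, whereas $\omega$, $\kappa$ and $\tw$ are not, which is exactly why the $\rd(G)\le\rd(G^=)$ step needs the explicit sign-bookkeeping above rather than a one-line appeal to minor monotonicity.
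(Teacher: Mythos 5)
Your proposal is correct and follows essentially the same route as the paper: the authors likewise assemble the chain from Theorem~\ref{thm:equality}, Proposition~\ref{prop:rd_lambda}, the definition of $\kappa$, van der Holst's bound $\kappa(G)\le\nu(G)$, and Belk--Connelly's $\rd(G^=)\le\tw(G)$. The only cosmetic difference is that you spell out the sign-bookkeeping for $\rd(G)\le\rd(G^=)$ where the paper simply says it follows from the definition; your argument is a correct unpacking of the same fact.
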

\begin{proof}
By definition, $\omega(G)-1 \leq \kappa(G)$. 
Van der Holst~\cite{H02} has shown that $\kappa(G)\leq \nu(G)$. 
By Theorem~\ref{thm:equality}, $\lambda(G)=\nu(G)-1$.
$\lambda(G)\leq {\rm rd}(G)$ has been shown in Proposition~\ref{prop:rd_lambda}.
${\rm rd}(G)\leq {\rm rd}(G^=)$ follows from the definition.
${\rm rd}(G^=)\leq {\rm tw}(G)$ has been observed  by Belk and Connelly~\cite{BC}.
\end{proof}
The inequality between ${\rm rd}(G^=)$ and ${\rm tw}(G)$ can be strict as observed by Belk and Connelly~\cite{BC}. 
The inequality between ${\rm rd}(G)$ and  ${\rm rd}(G^=)$ can be strict since ${\rm rd}(K_n)=n-2$ and  ${\rm rd}(K_n^=)=n-1$.
It is also known that  $\nu(G)$ is lower bounded by some monotone function of ${\rm tw}(G)$, see, e.g.,~\cite{L}.
This follows from the fact that the triangular lattice $\Delta_r$ of width $r$ satisfies $\nu(\Delta_r)=r$
and any graph with large treewidth contains a triangular lattice of large width as a minor.
The triangular lattice also gives an example of graphs $G$ with small connectivity and 
$\nu(G)-1=\lambda(G)={\rm rd}(G)={\rm rd}(G^=)={\rm tw}(G)$.

Currently there is no example that separates $\lambda(G)$ and ${\rm rd}(G)$, and we conjecture that they are actually equal.

\begin{conjecture}\label{conj:rd}
For a multigraph $G$, $\lambda(G)={\rm rd}(G)$.
\end{conjecture}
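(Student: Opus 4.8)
Since Proposition~\ref{prop:rd_lambda} already gives $\lambda(G)\le\rd(G)$, the task is to prove the reverse inequality $\rd(G)\le\lambda(G)$; by Theorem~\ref{thm:equality} this amounts to showing $\rd(G)\le\nu(G)-1$, i.e.\ that every tensegrity realization of $G$ admits a deformation of dimension at most $\nu(G)-1$. A useful first step is a reduction to finitely many graphs per dimension. Because $\lambda\le\rd$ and both parameters are minor-monotone (Corollary~\ref{cor:monotonicity_lambda}, Proposition~\ref{prop:rd}), the identity $\lambda=\rd$ holds in general as soon as it holds on every multigraph $R$ that is a \emph{minimal} forbidden minor of the minor-closed class $\{H:\rd(H)\le d\}$, ranging over all $d$: if $\lambda(G)<\rd(G)$ for some $G$, then taking $d:=\lambda(G)$ there is a minor $R$ of $G$ that is minimal with $\rd(R)>d$, and then $\lambda(R)\le\lambda(G)=d<\rd(R)$, contradicting $\lambda(R)=\rd(R)$. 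For each fixed $d$ there are only finitely many such $R$, so proving the conjecture reduces to a bounded verification per dimension.

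In low dimensions this reduction can actually be carried out. The case $d\le 1$ is treated in Section~\ref{sec:characterization_rd}. For $d\le 3$, the minimal obstructions to $\rd\le d$ can be pinned down from Belk and Connelly's analysis of the bar-joint case together with the elementary facts $\rd(G)\le\rd(G^=)$, $\rd(K_n)=n-2$, $\rd(K_n^=)=n-1$, and $\rd(G)\le|V(G)|-1$; for each such $R$ one checks directly that $\nu(R)\ge d+2$, for instance by exhibiting $K_{d+2}$, or one of the graphs in van der Holst's list for $\nu\le d+1$ (Figure~\ref{fig:list} when $d=3$), as a minor of $R$ and invoking minor-monotonicity of $\nu$. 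Combined with $\lambda(R)\le\rd(R)$ this yields $\lambda(R)=\rd(R)$ and hence $\lambda=\rd$ for all multigraphs in dimension at most three.

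For general $d$ I would pursue a structural route. Van der Holst's characterization of multigraphs with bounded $\nu$ (consistent with the bound $\kappa(G)\le\nu(G)$) describes a graph with $\nu(G)\le k$ as built by clique sums from pieces that are either of bounded size or drawn from a short exceptional list. On the realizability side, Belk and Connelly showed that realizations behave well under clique sums: a low-dimensional realization of two graphs glued along a common clique $K$ can be assembled from low-dimensional realizations of the parts by identifying them along a common affine image of $K$. The plan is to realize each piece of the decomposition of a graph $G$ with $\nu(G)\le d+1$ in dimension at most $\nu(G)-1$ --- using $\rd(P)\le|V(P)|-1\le\nu(G)-1$ for the bounded pieces and ad hoc arguments for the exceptional ones --- and then to glue, verifying that the gluing never inflates the dimension past $\nu(G)-1$ (here the bounded size of the gluing cliques, again via $\kappa(G)\le\nu(G)$, is exactly what is needed).

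The main obstacle is the passage from universal rigidity back to a spectral certificate. The inequality $\lambda\le\rd$ is powered by Connelly's implication ``super stable $\Rightarrow$ universally rigid'', and the conjectured converse would follow if the extremal tensegrity witnessing $\rd(G)$ were always essentially super stable, so that its equilibrium stress matrix certifies $\nu(G)\ge\rd(G)+1$. But universal rigidity does not imply super stability in general: there exist universally rigid frameworks possessing no positive semidefinite stress matrix of the requisite rank, owing to higher-order rigidity phenomena. Consequently any argument producing a solution of $\text{P}_{(G,\sigma,p)}$ of rank at most $\nu(G)-1$ must be controlled by the combinatorics of $G$ through $\nu$ rather than by the geometry of $p$, and no mechanism forcing the rank of such semidefinite solutions to be governed by $\nu$ --- as opposed to cruder parameters such as treewidth, which strictly exceeds $\nu-1$ in general --- is currently available. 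Making the clique-sum route above respect the exact bound $\nu(G)-1$, and disposing of the exceptional pieces in van der Holst's decomposition, is where I expect the real difficulty to lie.
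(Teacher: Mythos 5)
This statement is an open \emph{conjecture} in the paper, not a theorem: Section~\ref{sec:relation} explicitly says that no proof is known, and Section~\ref{sec:characterization_rd} only settles it in special cases (chordal multigraphs and realizable dimension at most one; the two-dimensional case is itself posed as Conjecture~\ref{con:rd2}). Your submission is not a proof but a sketch of possible lines of attack, and to your credit you say so yourself in the final paragraph. So the short answer is: there is a genuine gap, and your own closing paragraph names it — universal rigidity does not imply super stability, so one cannot extract a low-rank positive semidefinite stress matrix certifying $\nu(G)\geq\rd(G)+1$ from the mere nonexistence of low-rank deformations.

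Two of your intermediate claims also overreach. First, the assertion that the case $d\leq 3$ ``can actually be carried out'' by combining Belk--Connelly with $\rd(G)\leq\rd(G^=)$ is unsubstantiated: Belk and Connelly characterize $\rd(G^=)\leq d$, not $\rd(G)\leq d$ for general multigraphs, and the inequality $\rd(G)\leq\rd(G^=)$ goes in the wrong direction for pinning down the multigraph obstructions. The paper in fact proves only the $d\leq 1$ case (Theorem~\ref{thm:rd1}) and poses $d=2$ as Conjecture~\ref{con:rd2}, and the closing remark of the paper gives a concrete stumbling block (the Wagner graph with parallel edges) for which the lacking-tree-decomposition technique does not apply. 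Second, the ``structural route'' for general $d$ presupposes a clique-sum decomposition theorem for multigraphs with $\nu\leq k$; van der Holst's results give forbidden-minor lists for $\nu\leq 2$ and $\nu\leq 3$ but, as far as the paper's references go, no such clique-sum decomposition for arbitrary $k$. Your finitely-many-forbidden-minors reduction is formally valid (as $\{H:\rd(H)\leq d\}$ is minor-closed), but it is a reduction, not a proof, and the verification step it requires is precisely what is missing; the reduction also quietly invokes Robertson--Seymour, which is worth stating explicitly.
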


The conjecture has an important implication in the context of rigidity
since the following property holds for multigraphs with $\lambda(G)={\rm rd}(G)$.
\begin{lemma}\label{lem:rigidity}
Suppose $d=\lambda(G)={\rm rd}(G)$ for a connected multigraph $G$.
Then $d$ is the maximum dimension in which $G$ admits a globally rigid/super stable/universally rigid tensegrity realization.
\end{lemma}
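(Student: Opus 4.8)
The plan is to treat the statement as three claims at once --- one per rigidity notion --- and to split each into an ``achievability'' half and an ``upper bound'' half, exploiting that super stability is the strongest and global rigidity the weakest of the three (super stable $\Rightarrow$ universally rigid by Theorem~\ref{thm:connelly}, and universally rigid $\Rightarrow$ globally rigid by the remark following the definition of universal rigidity).

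For achievability: since $\lambda(G)=d$, by definition $G$ has a $d$-dimensional super stable tensegrity realization $(G,\sigma,p)$. By Theorem~\ref{thm:connelly} this realization is universally rigid, hence globally rigid in $\mathbb{R}^d$, so the single realization $(G,\sigma,p)$ simultaneously witnesses $d$-dimensional realizability under all three properties.

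For the upper bound, it suffices to rule out a $d'$-dimensional globally rigid realization for every $d'>d$, since super stability and universal rigidity both entail global rigidity. Here I would argue as follows. Let $(G,\sigma,p)$ be any tensegrity realization of $G$ whose affine span has dimension $d'>d$. Because $\rd(G)=d$, the multigraph $G$ is $d$-realizable, so $(G,\sigma,p)$ admits a deformation $(G,\sigma,q)$ whose affine span has dimension $e\le d$. Since a deformation is constrained only by the cable/strut conditions on edge lengths, $q$ may be taken to map $V(G)$ into an $e$-dimensional affine subspace of $\mathbb{R}^{d'}$ (legitimate since $e\le d<d'$), so that $(G,\sigma,q)$ is a deformation of $(G,\sigma,p)$ in $\mathbb{R}^{d'}$. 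As a Euclidean isometry preserves the dimension of the affine span and $e\le d<d'$, the configuration $q$ is not congruent to $p$; hence $(G,\sigma,p)$ is not globally rigid in $\mathbb{R}^{d'}$. Combining the two halves yields that $d$ is precisely the maximum dimension admitting a globally rigid --- equivalently universally rigid, equivalently super stable --- tensegrity realization of $G$.

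I do not expect a genuinely hard step. The only point that needs care is recognizing that the bound supplied by $\rd(G)=d$ applies to \emph{every} realization of $G$, not merely the universally rigid ones; this is exactly what lets us handle global rigidity --- a priori the weakest and least structured hypothesis --- without a separate dedicated argument, reducing the whole lemma to the achievability observation plus this one-line deformation-dimension argument.
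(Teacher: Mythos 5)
Your proposal is correct and takes essentially the same approach as the paper, which phrases the same logic as a chain of inequalities $d={\rm rd}(G)\geq {\rm gr}(G)\geq {\rm ur}(G)\geq \lambda(G)=d$ where ${\rm gr}$ and ${\rm ur}$ denote the analogous maxima for global and universal rigidity. Both arguments rest on the same two facts you use: super stable $\Rightarrow$ universally rigid $\Rightarrow$ globally rigid for achievability, and the observation that any realization of affine dimension exceeding ${\rm rd}(G)$ admits a strictly lower-dimensional, hence non-congruent, deformation for the upper bound.
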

\begin{proof}
Denote by ${\rm gr}(G)$ and ${\rm ur}(G)$ the maximum dimensions in which $G$ admits a globally rigid and universally rigid tensegrity realization, respectively.
By definition, ${\rm gr}(G)\geq {\rm ur}(G)$ holds, and by Theorem~\ref{thm:connelly}, ${\rm ur}(G) \geq \lambda(G)$ holds.
We also have ${\rm rd}(G)\geq {\rm gr}(G)$ since any realization of $G$ in higher dimensional space than ${\rm rd}(G)$ has a deformation into a lower dimension space.
We thus obtain $d={\rm rd}(G)\geq {\rm gr}(G)\geq {\rm ur}(G)\geq \lambda(G)=d$, and the equality holds in each inequality.
\end{proof}

% We may even pose a stronger conjecture as follows.
% \begin{conjecture}\label{conj:rigidity}
% For a connected multigraph $G$, ${\rm rd}(G)$ is the maximum dimension in which $G$ admits a rigid/globally rigid/super stable/universally rigid tensegrity realization.
% \end{conjecture}

We have already pointed out  that  ${\rm rd}(G) < {\rm rd}(G^=)$ may hold   since ${\rm rd}(K_n)=n-2$ and  ${\rm rd}(K_n^=)=n-1$.
Currently we have no example for which the difference of those two parameters is more than one.  Hence we pose the following conjecture.
\begin{conjecture}\label{conj:difference_by_one}
For a multigraph $G$, ${\rm rd}(G^=)-1\leq {\rm rd}(G)\leq {\rm rd}(G^=)$.
\end{conjecture}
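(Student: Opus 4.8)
The plan is to establish the two inequalities separately; the right‑hand one is essentially definitional, while the left‑hand one carries all the content. For ${\rm rd}(G)\le{\rm rd}(G^=)$: given any tensegrity realization $(G,\sigma,p)$, replace each parallel class of $G^=$ by a bar (one edge of each sign), obtaining a realization $(G^=,\sigma^*,p)$ whose deformations all preserve every edge length and hence are deformations of $(G,\sigma,p)$; thus the least dimension of a deformation of $(G^=,\sigma^*,p)$ is at least that of $(G,\sigma,p)$, and taking a supremum over realizations of $G$ gives the claim. Moreover, since turning a bar into a cable or strut only enlarges the set of deformations, the all‑bars sign pattern on $G^=$ is, for a fixed point configuration, the hardest to flatten; consequently ${\rm rd}(G^=)$ equals the maximum, over bar‑joint frameworks $({\rm si}(G),p)$, of the least dimension of a framework with the same edge lengths. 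Writing $H={\rm si}(G)$ and $d={\rm rd}(G)$, the remaining inequality ${\rm rd}(G^=)\le d+1$ thus reduces to: every bar‑joint framework $(H,p)$ admits an equivalent framework of dimension at most $d+1$.

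Let $H_0\subseteq E(H)$ be the set of parallel classes that are doubled in $G$ and $H_1=E(H)\setminus H_0$ the classes occurring with a single edge. The natural attack is a relax--flatten--reinflate scheme: form a tensegrity realization of $G$ in which each edge over a class in $H_1$ is a cable and the two edges over a class in $H_0$ form a bar; by $d$‑realizability of $G$ this realization has a deformation $q$ of dimension at most $d$, with $\|q(u)-q(v)\|=\|p(u)-p(v)\|$ for $uv\in H_0$ and $\|q(u)-q(v)\|\le\|p(u)-p(v)\|$ for $uv\in H_1$; finally re‑inflate into one additional coordinate, putting $q'(v)=(q(v),t_v)\in\mathbb{R}^{d+1}$ and choosing the heights $t_v$ so that $t_u=t_v$ for every $uv\in H_0$ and $|t_u-t_v|=\bigl(\|p(u)-p(v)\|^2-\|q(u)-q(v)\|^2\bigr)^{1/2}$ for every $uv\in H_1$.

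The crux is the re‑inflation step, and it is the reason the statement is only a conjecture here. After contracting each connected component of $(V(H),H_0)$, the height equations are solvable exactly when the $H_1$‑edges induce a forest on those components; otherwise a cycle forces an impossible signed sum among prescribed absolute differences. This genuinely blocks the approach whenever $H$ is far from being a forest --- for instance when $G$ is simple and dense, so that $H_1=E(H)$ and $H$ itself would have to be a forest. In such regimes the conjectured bound does hold, but for unrelated reasons (a bar‑joint framework of a dense graph is already low‑dimensional while $d$ is then correspondingly large), and fusing the two regimes is the real difficulty. The most promising way out that I see is to stop committing to one flattening and argue inside the spectrahedra directly: the deformations of the all‑bars realization $(G^=,\cdot,p)$ form the intersection of the compact convex spectrahedron $\{X\succeq0:\langle X,F_{uv}\rangle\le\|p(u)-p(v)\|^2,\ uv\in E(H)\}$ with the analogous ``all struts'' spectrahedron, and $d$‑realizability of $G$ supplies each of these (indeed each realizable sign pattern on $H$) with a feasible matrix of rank at most $d$; one would try to exhibit a feasible matrix of rank at most $d+1$ in the intersection by a rank‑reduction or facial‑reduction argument that uses several of these low‑rank witnesses simultaneously rather than re‑inflating a single one. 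I expect essentially all of the work to lie in that last step.
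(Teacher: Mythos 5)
This statement is posed in the paper as an open \emph{conjecture}, not a theorem; the paper offers no proof whatsoever, only the motivating observation that ${\rm rd}(K_n)=n-2$ while ${\rm rd}(K_n^=)=n-1$, so the gap can equal one, together with the remark that no example with a larger gap is known. Your proposal therefore cannot be measured against ``the paper's own proof,'' because there is none.

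That said, a few comments on what you wrote. Your argument for the right‑hand inequality ${\rm rd}(G)\le{\rm rd}(G^=)$ is correct and is essentially the content behind the paper's one‑line remark in Corollary~\ref{cor:basic_relation}: any deformation of the all‑bars realization $(G^=,\sigma^*,p)$ preserves every interpoint distance and hence is a deformation of $(G,\sigma,p)$ for every sign pattern $\sigma$, so $d$‑realizability of $G^=$ forces $d$‑realizability of $G$. For the left‑hand inequality, your relax–flatten–reinflate scheme is a natural first attack, and you correctly locate where it breaks: prescribing only the magnitudes $|t_u-t_v|$ of height differences on the singly‑covered edges $H_1$ is a constraint‑satisfaction problem that is infeasible once the $H_1$‑edges close a cycle with an incompatible set of slacks, and the flattening step gives you no control over those slacks. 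You are honest that the suggested spectrahedral rank‑reduction workaround is only a direction, not an argument. So your proposal correctly identifies the status of the statement and the obstruction to the naive approach, but it does not close the gap — which is consistent with the paper, where the statement remains open.
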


Belk and Connelly gave a characterization of simple graphs $G$ with ${\rm rd}(G^=)\leq k$ for $k\in \{1,2,3\}$.
In view of this, the following problem is natural for understanding  ${\rm rd}$.
\begin{problem}\label{prob:rd}
Characterize a class of multigraphs $G$ satisfying ${\rm rd}(G)={\rm rd}(G^=)$.
\end{problem}

\section{Characterizing Multigraphs with Bounded Realizable Dimension}\label{sec:characterization_rd}
In this section we confirm the conjectures in Section~\ref{sec:relation} for several special classes of multigraphs by computing the value of ${\rm rd}(G)$ explicitly.

Since $\lambda(G)\leq {\rm rd}(G)$,  giving a lower bound on ${\rm rd}(G)$ can be accomplished by constructing a super stable realization of $G$. The difficult direction is to give an upper bound of ${\rm rd}(G)$. Namely, we need to show how to fold each tensegrity realization of  $G$ down to a lower dimensional space.
(In this paper, {\em folding} is any procedure that finds a lower dimensional deformation of a given tensegrity.) 
Our general folding strategy  is to use a special type of tree decompositions, which we shall explain in the next subsection. We then show how to apply this strategy in several classes of multigraphs.

We first review basic terminology on tree decompositions.
A {\em tree decomposition} of a multigraph $G=(V,E)$ is a pair $(T, \cW)$, where $\cW$ is  a collection of vertex subsets of $V$ and $T$ is a tree on $\cW$ satisfying the follows three conditions:
(i) each vertex $v\in V$ belongs to at least one set in $\cW$,
(ii) for each edge $e\in E$, there is at least one set in $\cW$ that contains both endvertices of $e$, and
(iii)  for each $v\in V$, the subcollection $\cW_v=\{W \in \cW: v\in W\}$ induces a subtree $T_v$  in $T$.
Note that  a tree decomposition of a multigraph $G$ is a tree decomposition of ${\rm si}(G)$, and vice versa.
A subset $W\in \cW$ is called a {\em bag},
the {\em  width} of the tree decomposition is $\max\{|W|-1: W\in \cW\}$, and
the {\em treewidth} ${\rm tw}(G)$ of $G$ is the minimum width over all tree decompositions of $G$.
A tree decomposition whose width attains the minimum width is called {\em optimal}.

There always exists an optimal tree decomposition  $(T, \cW)$ such that $W_i\not \subseteq W_j$ for any distinct bags $W_i, W_j$. Hence, throughout the paper, we include this property in the definition of tree decompositions and assume that a tree decomposition always satisfies this property.

An important property of a tree decomposition $(T, {\cal W})$ is that 
for any edge $t_1t_2$ in $T$,
$W_{t_1}\cap W_{t_2}$ is a separator in $G$.
Moreover, denoting the components of $T-t_1t_2$ by $T_1$ and $T_2$,  $W_{t_1}\cap W_{t_2}$ separates 
$\bigcup_{t\in T_1} W_t$ and $\bigcup_{t\in T_2} W_t$.
See \cite[Lemma 12.3.1]{D}.
%An important property of treeFor two bags $W_i, W_j$ adjacent in $T$, $W_i\cap W_j$ is a separator of $G$.

\subsection{Realizable Dimension and Tree Decompositions}
To demonstrate our idea, let us first look at  a multigraph $G$ with ${\rm si}(G)=K_n$.
For $G=K_n^=$, a bar-joint framework of the complete graph with $n$ vertices realized in general position in $\mathbb{R}^{d-1}$ does not admit an equivalent framework in lower-dimensional space. So, ${\rm rd}(K_n^=)=n-1$ holds.
(This can be also seen from ${\rm rd}(K_n^=)\geq\lambda(K_n^=)=n-1$).

We claim that ${\rm rd}(G)\leq n-2$ if $G\neq K_n^=$.
To see this, take a pair $u,v$  of vertices which are not linked by parallel edges.
Then, for any $(n-1)$-dimensional tensegrity realization $(G,\sigma,p)$ of $G$, one can fold it to a $(n-2)$-dimensional space by rotating $p(u)$ about a $(n-3)$-dimensional axis that contains the remaining $(n-2)$ points of  $(G,\sigma,p)$ in such a way that only the distance between $p(u)$ and $p(v)$ changes. (And the distance between $p(u)$ and $p(v)$ monotonically increases or decreases depending on the direction of rotation.)
This implies ${\rm rd}(G)\leq n-2$.
When ${\rm si}(G)=K_n$,  the equality also follows by ${\rm rd}(G)\geq \lambda(G)\geq \lambda(K_n)=n-2$.

This elementary geometric observation turns out to be a nontrivial tool if we combine it with tree decompositions.
Let $G$ be a multigraph and $(T, \cW)$ be a tree decomposition of $G$. 
We say that a bag $W$ is {\em lacking} if 
either $|W|-1$ is strictly smaller than the width of $(T, \cW)$ or 
there is a pair $u,v\in W$ such that $u$ and $v$ are not linked by parallel edges and $\{u,v\}$ is not contained in any other bag. 
We say that $(T, \cW)$  is {\em lacking} if every bag is lacking.

By Corollary~\ref{cor:basic_relation}, 
${\rm rd}(G)\leq {\rm tw}(G)$ for any multigraphs $G$.
The existence of a lacking optimal tree decomposition implies a better bound as follows.
\begin{lemma}\label{lem:lacking}
Let $G$ be a multigraph.
Suppose that an optimal tree decomposition of $G$ is lacking.
Then ${\rm rd}(G)\leq {\rm tw}(G)-1$.
\end{lemma}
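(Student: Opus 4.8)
The plan is to prove Lemma~\ref{lem:lacking} by combining the tree decomposition structure with the elementary rotation (folding) argument sketched just above for complete graphs. Let $w = {\rm tw}(G)$ and fix an optimal lacking tree decomposition $(T,\cW)$, so every bag has size at most $w+1$ and every bag is lacking. Suppose for contradiction (or rather, directly, as a folding procedure) that we are given a $w$-dimensional tensegrity realization $(G,\sigma,p)$; I want to produce a deformation $(G,\sigma,q)$ whose affine span has dimension at most $w-1$. First I would root $T$ at an arbitrary node and process the bags, say in a top-down (BFS) order, maintaining the invariant that after processing a prefix of bags the points of all processed bags already lie in a common hyperplane $H \cong \mathbb{R}^{w-1}$ (fixed once and for all at the start, or built incrementally).

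The heart of the argument is the single-bag folding step. Consider a bag $W$ with $|W|\le w+1$; since $W$ is lacking, either $|W|\le w$, in which case the $|W|$ points $p(W)$ already affinely span at most a $(w-1)$-dimensional flat and can be moved into $H$ by a rigid motion without changing any intra-$W$ distance; or $|W|=w+1$ and there is a non-parallel pair $u,v\in W$ with $\{u,v\}$ contained in no other bag. In the latter case the other $w-1$ points of $W$ determine an affine axis $A$ of dimension $\le w-2$; rotating $p(u)$ about $A$ keeps fixed the distances from $p(u)$ to every point of $W\setminus\{u\}$ \emph{except} $p(v)$, and lets the distance $\|p(u)-p(v)\|$ vary monotonically; choosing the rotation direction consistently with $\sigma(uv)$ (shrink if $uv$ is a cable, i.e.\ $\sigma(uv)=+$; stretch if a strut) and rotating until $p(u)$ lands in the hyperplane spanned by $A$ and the remaining freedom, we bring $W$ into a $(w-1)$-flat while respecting all cable/strut constraints \emph{within} $W$. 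Crucially, because $\{u,v\}$ lies in no other bag, the edge(s) $uv$ are ``local'' to $W$: no other bag constrains the pair moved, so once a separator $W_{t}\cap W_{t'}$ is already positioned, moving the remaining vertices of a child bag does not disturb edges lying in other parts of the tree.

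The bookkeeping that makes this globally consistent is the standard separator property of tree decompositions quoted in the excerpt: for an edge $t_1t_2$ of $T$, the intersection $W_{t_1}\cap W_{t_2}$ separates the two sides, and every edge of $G$ is induced in some single bag. So I would fold bag by bag along $T$: when processing a child bag $W_{t}$ whose parent bag $W_{s}$ has already been folded into $H$, the shared separator $W_s\cap W_t$ is already in $H$; I then only need to move the vertices of $W_t\setminus W_s$ (together with the subtree hanging below, moved rigidly as a block relative to $W_t$) into $H$. Within $W_t$ itself I apply the single-bag folding step above to the vertices not yet in $H$, using the lacking condition for $W_t$; the non-parallel ``slack'' pair provided by lackingness is exactly what supplies the one degree of rotational freedom needed to collapse the last dimension while only one edge's length changes monotonically in the allowed direction. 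Edges of $G$ between $W_t\setminus W_s$ and vertices outside $W_t$ cannot exist (they would have to sit in a common bag, contradicting the separator property), so no constraint outside the current step is violated.

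The main obstacle I anticipate is making the per-bag rotation and the ``move the whole pendant subtree as a rigid block'' operation genuinely compatible --- in particular, ensuring that after folding a bag the separator it shares with each of its \emph{own} children is still a rigid configuration that subsequent steps can treat as fixed, and that the monotone length change of the one moved edge is always in the cable/strut--feasible direction regardless of the starting configuration (one may need to rotate ``the short way'' or ``the long way,'' and must check the target hyperplane is always reachable, which it is since rotating by $\pi$ already changes the affine dimension). A secondary subtlety is the case $|W|=w+1$ where the $w-1$ non-moved points of $W$ are themselves affinely degenerate, so the rotation axis $A$ has dimension strictly less than $w-2$; this only helps (more freedom), but the monotonicity claim for $\|p(u)-p(v)\|$ should be stated carefully (it holds for rotation in the $2$-plane through $p(u)$ and $p(v)$ orthogonal to $A$). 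Once these geometric details are nailed down, iterating over all bags of $T$ yields a deformation lying in $H\cong\mathbb{R}^{w-1}$, proving ${\rm rd}(G)\le {\rm tw}(G)-1$.
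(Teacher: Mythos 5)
Your rotation tool is the right one, and it is the same tool the paper uses; but the one-pass, top-down strategy of folding each bag directly into a fixed hyperplane $H$ has a genuine gap, which you yourself flag as ``the main obstacle I anticipate'' without resolving. Concretely: (a) the axis you rotate about, namely the affine span of $p(W_t \setminus \{u,v\})$, need not lie in $H$ and need not even be fixed under your invariant, since lackingness only guarantees that at least one of $u,v$ lies outside the parent bag $W_s$; whenever $|W_t \setminus W_s| \geq 2$ the remaining $d$ vertices of $W_t$ also include unplaced vertices, and there is then no single rotation that keeps $W_s \cap W_t$ pinned, lands every new vertex of $W_t$ in $H$, and alters only $\|p(u)-p(v)\|$. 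And (b) when the lacking partner $v$ belongs to $W_s \cap W_t$ (hence is already pinned in $H$) and also appears in descendant bags of $W_t$, the ``pendant subtree'' block you want to move rigidly contains $v$; a rigid motion of that block would disturb the pinned $v$, while fixing $v$ and moving only part of the block is not a rigid motion and may violate constraints inside descendant bags. (Your parenthetical claim that edges between $W_t \setminus W_s$ and vertices outside $W_t$ cannot exist is also false as stated; such edges do occur inside descendant bags, and what the separator property actually gives is that no such edge leaves the subtree rooted at $W_t$.)

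The paper avoids all of this by splitting the folding into two phases. Phase one makes every bag \emph{flat} (the affine span of $p(W)$ has dimension at most $d$), without committing to any hyperplane $H$. For a non-flat bag $W^*$ with lacking pair $\{u,v\}$ it rotates a block determined not by the root/subtree split of $T$ but by the finer vertex partition $A = (W^*\setminus\{u\}) \cup \bigcup\{X_i : v \in X_i\}$ and $B = (W^*\setminus\{v\}) \cup \bigcup\{X_i : v \notin X_i\}$, where the $X_i$ are the vertex sets over the components of $T - W^*$. This partition has exactly the two features your block lacks: $A \cap B = W^* \setminus \{u,v\}$, so the rotation axis is precisely the affine span of the remaining $d$ vertices of $W^*$ (all fixed), and the only possible edge between $A\setminus B$ and $B\setminus A$ is $uv$, so the rotation is cable/strut-feasible. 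Since nothing is yet pinned to $H$, there is no pinning conflict, and the rotation also preserves flatness of every other bag because each is contained in $A$ or in $B$. Phase two then rigidly rotates the (now flat) bags one by one into a common hyperplane $H$ about the affine span of the relevant separator; flatness is what guarantees this separator span has dimension strictly less than $d$, and since these rotations change no edge length at all, no cable/strut condition needs to be checked in this phase. Interleaving the two phases as you propose forces you to manage the pinning constraint, the choice of rotating block, and the location of the lacking axis simultaneously, and your write-up does not resolve the cases in which they conflict.
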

\begin{proof}
Let $d={\rm tw}(G)-1$, and let $(G,\sigma,p)$ be a tensegrity realization of $G$ in dimension greater than $d$. Our goal is to fold the tensegrity into a $d$-dimensional space.

Let $(T,\cW)$ be an optimal lacking tree decomposition of $G$.
We say that a bag $W$ is {\em flat} if the affine span of $p(W)$ is $d$-dimensional. 
We first show how to fold $(G,\sigma,p)$ so that each bag becomes flat.
Suppose $W^*\in {\cal W}$ is a bag which is not flat. 
Since $(T,\cW)$ is lacking,  there is a pair $u,v\in W^*$ such that $u$ and $v$ are not linked by parallel edges and $\{u,v\}$ is not contained in any other bag.
Consider removing the node $W^*$ from $T$, and let $T_1,\dots, T_k$ be the connected components in the resulting forest.
Let $X_i$ be the set of vertices of $G$ which are contained in a bag in $T_i$, i.e., $X_i=\bigcup_{W\in T_i} W$, for each $i=1,\dots, k$.
Then, since $W^*$ is the only bag that contains $\{u,v\}$, the third property of the tree decomposition implies  
\begin{equation}\label{eq:tree_decomposition}
\{u,v\}\not\subset X_i
\end{equation}
for each $i=1.\dots, k$.

Let $A=(W^*-u)\cup \bigcup_{X_i: v\in X_i} X_i$ and $B=(W^*-v)\cup \bigcup_{X_i: v\notin X_i} X_i$.
By (\ref{eq:tree_decomposition}) and the third property of  tree decompositions, 
we have that 
$u\notin A$, $A\cap B=W^*-u-v$,
and  there is no edge between $A\setminus W^*$ and $B\setminus W^*$.
Let $G_A$ and $G_B$ be the subgraphs of $G$ induced by $A$ and $B$, respectively.
Then, $|V(G_A)\cap V(G_B)|=|W^*-u-v|=d$ and there is at most one edge between $V(G_A)\setminus V(G_B)$ and $V(G_A)\setminus V(G_B)$, which is a single edge between $u$ and $v$ if exists. 
Hence, one can rotate the sub-tensegrity of $G_A$ about a $(d-1)$-dimensional axis containing $p(V(G_A)\cap V(G_B))$ in such a way that $W^*$ becomes flat in the resulting tensegrity. 
Since each bag $W$ with $W\neq W^*$ is contained in either $G_A$ or $G_B$, by this rotation all points of $p(W)$ rotate simultaneously or remain stationary.
In particular, a flat bag remains flat.
Hence, this procedure does not affect the flatness of other bags, and we can apply the argument in each bag independently.

Thus we may suppose that each bag  is flat in $(G, \sigma, p)$.
Pick any bag $W_0$ in $T$ and regard $T$ as a rooted-tree whose root is  $W_0$.
Let $H$ be a $d$-dimensional subspace that contains $p(W_0)$,
and call a bag $W$ an $H$-bag if $p(W)\subseteq H$.
Let $T'$ be the maximum sub-rooted-tree (rooted at $W_0$) consisting of $H$-bags.
If $T=T'$, then all points of $(G,\sigma,p)$ are contained in $H$,
and the statement follows.
Hence, assume $T'\neq T$.
Pick a bag $W_i$ such that it is not contained in $T'$ and its parent $W_j$ is contained in $T'$.
%Let $W_j$ be the parent of $W_i$ in $T$. Then $W_j$ is in $T'$ and hence $p(W_j)$ is contained in $H$.
By the definition of tree decompositions, $W_i\not \subseteq W_j$ and $W_j\not \subseteq W_i$. Hence we have $|W_i\cap W_j|\leq {\rm tw}(G)=d+1$. 
If the affine span of $p(W_i\cap W_j)$ is $d$-dimensional, then $p(W_i)$ is already contained in $H$
since $p(W_i)$ is flat and hence $p(W_i\cap W_j)$ affinely spans all points in $p(W_i)$.
This however contradicts the fact that $W_i$ is not contained in $T'$.
Hence the dimension of the affine span of $p(W_i\cap W_j)$ is less than $d$.
We consider the sub-tree $T_i$ of $T$ consisting of the descendants of  $W_i$  in $T$,
and consider rotating the sub-tensegrity induced by the union of the bags in $T_i$ about  the affine span of $p(W_i\cap W_j)$.
Since $W_i$ is flat and the affine dimension of $p(W_i\cap W_j)$ is less than $d$, 
we can perform this rotation such that all points of $W_i$ are contained in $H$.
Then the size of $T'$ becomes larger in the resulting tensegrity.
Applying this argument repeatedly, we obtain a tensegrity contained in $H$ as required.
\end{proof}
  
\subsection{Realizable Dimension of Chordal Graphs}
As an application, we shall give a characterization of  ${\rm rd}(G)$ for chordal graphs $G$.

A simple graph is said to be {\em chordal} if $G$ has no induced cycle of length more than three.
There are several equivalent characterizations of chordal (simple) graphs. 
We use a characterization in term of tree decompositions:
A graph $G$ is chordal if and only if 
$G$ has an optimal tree decomposition such that 
each bag is a clique (see, e.g.,~\cite[Proposition~12.3.6]{D}).
% One of well-known characterizations is in terms of the intersection graphs of subtrees. 
% Let $V$ be a finite set, $T$ be a tree on a finite set, 
% and $\{T_v: v\in V\}$ be a collection of subtrees in $T$, each $T_v$ of which is labeled by $v\in V$.
% The intersection graph is defined as a simple graph on $V$ such that $u$ and $v$ are connected by an edge if and only if 
% $V(T_u)\cap V( T_v)\neq \emptyset$.
% Gavril~\cite{G74} showed that the resulting graph is chordal and moreover any chordal graph can be represented as the intersection graph of a collection of subtrees.
% This representation of a chordal graph $G$ gives us an optimal tree decomposition such that the collection of the bags enumerates the maximal cliques of $G$.

 We say that a multigraph $G$ is {\em chordal} if ${\rm si}(G)$ is chordal. Fallat and Mitchell~\cite{FM} showed that, for a chordal multigraph $G$, $\nu(G)\in \{{\rm tw}(G), {\rm tw}(G)+1\}$ holds, and moreover  $\nu(G)={\rm tw}(G)+1$ holds if and only if 
 there is a maximum clique
 %\footnote{There seems a confusion between maximal cliques and maximum cliques in  Fallat and Mitchell's paper.} 
 in which each pair of vertices is either linked by parallel edges or is contained in other maximal clique. 
 The following statement complements to their result from the rigidity theory viewpoint. 
 \begin{theorem}\label{thm:chordal}
 Let $G$ be a chordal multigraph.
 Then $\lambda(G)=\nu(G)-1={\rm rd}(G) \in \{{\rm tw}(G)-1, {\rm tw}(G)\}$.
 Moreover, ${\rm rd}(G)={\rm tw}(G)$ holds if and only if  there is a maximum clique in which each pair of vertices is either linked by parallel edges or is contained in other maximal clique.
 \end{theorem}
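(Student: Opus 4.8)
The plan is to combine the three inputs that are already available: the algebraic equality $\lambda(G)=\nu(G)-1$ from Theorem~\ref{thm:equality}, the Fallat--Mitchell dichotomy $\nu(G)\in\{{\rm tw}(G),{\rm tw}(G)+1\}$ for chordal multigraphs together with their criterion for which case occurs, and the folding machinery of Lemma~\ref{lem:lacking}. The identity $\lambda(G)=\nu(G)-1$ is immediate, so the content is to show $\nu(G)-1={\rm rd}(G)$ and to translate the Fallat--Mitchell criterion into a statement about ${\rm rd}(G)$. Since we always have $\lambda(G)\le{\rm rd}(G)$ by Proposition~\ref{prop:rd_lambda} and ${\rm rd}(G)\le{\rm tw}(G)$ by Corollary~\ref{cor:basic_relation}, the whole theorem reduces to two things: (a) when the Fallat--Mitchell condition fails, i.e. $\nu(G)={\rm tw}(G)$, show ${\rm rd}(G)\le{\rm tw}(G)-1$; and (b) when it holds, i.e. $\nu(G)={\rm tw}(G)+1$, the chain $\lambda(G)=\nu(G)-1={\rm tw}(G)\le{\rm rd}(G)\le{\rm tw}(G)$ forces equality throughout, so nothing more is needed. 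So the real work is entirely in part (a).

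For part (a), I would use the chordality characterization: $G$ has an optimal tree decomposition $(T,\cW)$ in which every bag is a (maximal) clique of ${\rm si}(G)$. I claim this tree decomposition is \emph{lacking} in the sense of Lemma~\ref{lem:lacking}. Take any bag $W$. If $|W|-1<{\rm tw}(G)$ then $W$ is lacking by definition, so assume $|W|$ is a maximum clique. By assumption (the Fallat--Mitchell condition fails), $W$ is not a clique in which every pair is either joined by parallel edges or contained in another maximal clique; hence there exist $u,v\in W$ not joined by parallel edges in $G$ and not contained in any other maximal clique. Since the bags of $(T,\cW)$ are maximal cliques, and any bag containing both $u$ and $v$ would be a clique containing $\{u,v\}$ hence a subset of some maximal clique containing $\{u,v\}$ — which can only be $W$ — the pair $\{u,v\}$ lies in no other bag. (Here I should be a little careful: a bag is a maximal clique, so if a bag $W'\neq W$ contained $\{u,v\}$ it would itself be a maximal clique $\neq W$ containing $\{u,v\}$, contradicting the choice of $u,v$.) Thus $W$ is lacking, so $(T,\cW)$ is a lacking optimal tree decomposition, and Lemma~\ref{lem:lacking} gives ${\rm rd}(G)\le{\rm tw}(G)-1$. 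Combined with $\lambda(G)={\rm tw}(G)-1$ (which holds in this case because $\nu(G)={\rm tw}(G)$) and $\lambda(G)\le{\rm rd}(G)$, we get ${\rm rd}(G)={\rm tw}(G)-1=\lambda(G)=\nu(G)-1$.

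Assembling: in case (a) we have just shown $\lambda(G)={\rm rd}(G)=\nu(G)-1={\rm tw}(G)-1$; in case (b) the squeeze argument gives $\lambda(G)={\rm rd}(G)=\nu(G)-1={\rm tw}(G)$. In both cases $\lambda(G)=\nu(G)-1={\rm rd}(G)\in\{{\rm tw}(G)-1,{\rm tw}(G)\}$, and ${\rm rd}(G)={\rm tw}(G)$ occurs exactly in case (b), i.e. exactly when there is a maximum clique in which each pair of vertices is either linked by parallel edges or contained in another maximal clique. That is the asserted characterization. The main obstacle is the bookkeeping in the claim that the clique tree decomposition is lacking — in particular verifying that "not contained in another maximal clique" is precisely the negation needed, and that bags being maximal cliques really does prevent $\{u,v\}$ from reappearing in another bag; everything else is a direct concatenation of cited results.
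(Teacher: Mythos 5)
Your proof is correct and follows essentially the same route as the paper's: reduce to $\lambda(G)={\rm rd}(G)$ via Theorem~\ref{thm:equality} and Fallat--Mitchell, observe that the clique tree decomposition of a chordal multigraph is lacking precisely when the Fallat--Mitchell clique condition fails, then close the argument with Lemma~\ref{lem:lacking} in one case and a squeeze from Corollary~\ref{cor:basic_relation} in the other. The small point you flag --- that bags of an optimal clique tree decomposition with no bag contained in another are exactly the maximal cliques, so a second bag containing $\{u,v\}$ would be a second maximal clique containing $\{u,v\}$ --- is the same fact the paper invokes (via ``every clique is contained in some bag'') to conclude that largest bags coincide with maximum cliques, so your careful parenthetical is the right thing to worry about and is resolved the same way.
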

 \begin{proof}
 By Theorem~\ref{thm:equality}, $\lambda(G)=\nu(G)-1$.
 By the theorem of Fallat and Mitchell~\cite{FM} on $\nu(G)$, $\lambda(G)=\nu(G)-1 \in \{{\rm tw}(G)-1, {\rm tw}(G)\}$, and $\lambda(G)={\rm tw}(G)$ holds if and only if  there is a maximum clique in which each pair of vertices is either linked by parallel edges or is contained in other maximal clique.
 It remains to prove $\lambda(G)={\rm rd}(G)$.
  
 Let $t={\rm tw}(G)+1$, and consider an optimal tree decomposition $(T,{\cal W})$ such that each bag is a clique. Since every clique is contained in some bag (see, e.g., \cite[Corollary 12.3.5]{D}),
 $W$ is a largest bag in ${\cal W}$ if and only if it forms a maximum clique in $G$.
 Hence,  $(T, {\cal W})$ is not lacking if and only if there is a maximum clique in which each pair of vertices is either linked by parallel edges or is contained in other maximal clique.
 So, $\lambda(G)={\rm tw}(G)$ if and only if $(T,{\cal W})$ is not lacking.

  Suppose $(T, {\cal W})$ is not lacking.
 Then  $\lambda(G)={\rm tw}(G)$.
Hence,  Corollary~\ref{cor:basic_relation} implies that  
${\rm tw}(G)= \lambda(G) \leq {\rm rd}(G)\leq {\rm tw}(G)$, and the equality holds in each inequality.

 Suppose $(T, {\cal W})$ is lacking. Then 
  ${\rm rd}(G)\leq {\rm tw}(G)-1$ by Lemma~\ref{lem:lacking}.
  Since a maximum clique of $G$ has size $t$,  
  the minor monotonicity of $\lambda$ implies
  ${\rm tw}(G)-1=t-2 = \lambda(K_t)\leq \lambda(G) \leq {\rm rd}(G)\leq  {\rm tw}(G)-1$,
 and the equality holds in each inequality.

 Thus, $\lambda(G)={\rm rd}(G)$ holds, and the proof is completed.
 \end{proof}

It is not difficult to show that ${\rm rd}(G^=)={\rm tw}(G)$ for chordal graphs $G$. 
This and Theorem~\ref{thm:chordal} gives a characterization of chordal multigraphs $G$ satisfying 
${\rm rd}(G)={\rm rd}(G^=)$.

\subsection{Realizable Dimension at Most One}
In  this section we give a characterization of graphs whose realizable dimension is at most one.
\begin{theorem}\label{thm:rd1}
The following are equivalent for a multigraph $G$.
\begin{description}
\item[(i)] ${\rm rd}(G)\leq 1$.
\item[(ii)] $\lambda(G)\leq 1$.
\item[(iii)] $G$ has no minor isomorphic to $K_4$ or $K_3^=$.
\end{description}
\end{theorem}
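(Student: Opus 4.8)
The plan is to prove the three implications $(i)\Rightarrow(iii)$, $(iii)\Rightarrow(ii)$, and $(ii)\Rightarrow(i)$, which together give the cycle of equivalences.

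For $(ii)\Rightarrow(i)$: this is immediate from Proposition~\ref{prop:rd_lambda}, which gives $\lambda(G)\le {\rm rd}(G)$; wait, that goes the wrong way. Instead I should argue directly. Since ${\rm rd}$ is minor monotone (Proposition~\ref{prop:rd}), it suffices to check that ${\rm rd}(K_4)\ge 2$ and ${\rm rd}(K_3^=)\ge 2$: then if ${\rm rd}(G)\le 1$, $G$ cannot contain either as a minor, which is $(i)\Rightarrow(iii)$ in contrapositive form. For ${\rm rd}(K_4)\ge 2$, use Example~3: $K_4$ has a $2$-dimensional super stable (hence universally rigid) realization, and as noted in the discussion before Proposition~\ref{prop:rd_lambda}, a $d$-dimensional universally rigid realization forces ${\rm rd}\ge d$; so ${\rm rd}(K_4)\ge 2$. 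For ${\rm rd}(K_3^=)\ge 2$: by Example~4 with $n=3$, $\lambda(K_3^=)=2$, so $K_3^=$ has a $2$-dimensional super stable (hence universally rigid) realization, giving ${\rm rd}(K_3^=)\ge 2$ by the same reasoning. This proves $(i)\Rightarrow(iii)$.

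For $(iii)\Rightarrow(ii)$: suppose $G$ has no minor isomorphic to $K_4$ or $K_3^=$. I want to show $\lambda(G)\le 1$, equivalently (by Theorem~\ref{thm:equality}) $\nu(G)\le 2$. The cleanest route is to invoke van der Holst's characterization of multigraphs with $\nu(G)\le 2$ (cited in the paragraph after the proof of Theorem~\ref{thm:characterization_super_stable}), which should state exactly that $\nu(G)\le 2$ iff $G$ has no $K_4$-minor and no $K_3^=$-minor. Alternatively, to keep the argument self-contained via the combinatorial picture: if $G$ has no $K_4$-minor then every block of $\s(G)$ is a cycle or a single edge (series–parallel, in fact simpler—no $K_4$ minor means treewidth at most $2$, and a $2$-connected graph of treewidth $\le 2$ with no $K_4$ minor is a cycle); and if additionally $G$ has no $K_3^=$-minor then no cycle block can have a doubled edge in a way that survives contraction, so each $2$-connected block of $G$ is either a single parallel class $K_2^=$ or a cycle $C_n$ (possibly with some parallel edges, but contracting a cycle with a parallel class down gives $K_3^=$ unless the cycle has length $2$). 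Then by Examples~1 and~2 together with the minor monotonicity of $\lambda$ (Corollary~\ref{cor:monotonicity_lambda}) and the fact that $\lambda$ of a graph equals the max over components, one checks $\lambda(G)\le 1$. The safest presentation is to cite van der Holst directly, exactly as the paper does for the $\nu\le 3$ case in Theorem~\ref{thm:characterization_super_stable}.

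The main obstacle is making $(iii)\Rightarrow(ii)$ rigorous without simply quoting van der Holst: one needs the precise structural description of $\{K_4,K_3^=\}$-minor-free multigraphs and then to verify $\lambda\le 1$ on that family, handling parallel edges carefully (a cycle with even one doubled edge, once the rest of the cycle is contracted, produces a $K_3^=$ minor, so the only blocks are $C_n$ with all edges simple, plus isolated $K_2^=$'s). Given that Theorem~\ref{thm:characterization_super_stable} already takes the "cite van der Holst" route for $\nu\le 3$, I would do the same here: combine Theorem~\ref{thm:equality} with van der Holst's $\nu\le 2$ characterization for $(iii)\Leftrightarrow(ii)$, and use the universal-rigidity-implies-${\rm rd}$-lower-bound observation with Examples~3 and~4 for $(i)\Leftrightarrow(ii)$.
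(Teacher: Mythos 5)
Your proposal correctly establishes $(i)\Rightarrow(iii)$ (via minor monotonicity of $\rd$ and the lower bounds $\rd(K_4)\ge\lambda(K_4)=2$, $\rd(K_3^=)\ge\lambda(K_3^=)=2$) and $(iii)\Leftrightarrow(ii)$ (via Theorem~\ref{thm:equality} and van der Holst's forbidden-minor characterization of $\nu\le 2$). These steps match the easy directions of the paper's argument. However, there is a genuine gap: you never prove any implication \emph{into} $(i)$, that is, you never show that $\{K_4,K_3^=\}$-minor-freeness (or $\lambda(G)\le 1$) forces ${\rm rd}(G)\le 1$. You notice early on that Proposition~\ref{prop:rd_lambda} goes the wrong way for $(ii)\Rightarrow(i)$ and say you will ``argue directly,'' but then you switch to proving $(i)\Rightarrow(iii)$ instead, and in your concluding sentence you claim that the universal-rigidity observation yields ``$(i)\Leftrightarrow(ii)$'' when in fact it only gives the inequality $\lambda(G)\le\rd(G)$, i.e.\ $(i)\Rightarrow(ii)$. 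Nothing in your proposal produces the required upper bound on $\rd$.

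This missing implication is precisely the content of the theorem beyond van der Holst; the paper itself flags Theorem~\ref{thm:rd1} as ``a stronger statement'' than the two-dimensional analogue of Theorem~\ref{thm:characterization_super_stable}. The paper proves $(iii)\Rightarrow(i)$ by induction on $|V(G)|$: it reduces to the $2$-connected case, takes an optimal width-$2$ tree decomposition minimizing the number of non-lacking bags, and then either Lemma~\ref{lem:lacking} applies (if the decomposition is lacking, giving $\rd(G)\le\tw(G)-1=1$), or one exhibits a $K_3^=$ minor / constructs a better decomposition, a contradiction. Upper-bounding $\rd$ requires an explicit folding argument of this kind; it cannot be deduced from the spectral inequality $\lambda\le\rd$ nor from $\rd\le\tw$, since the latter only gives $\rd(G)\le 2$. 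As a secondary remark, the structural claim you sketch as an alternative to citing van der Holst (``each $2$-connected block is a cycle with simple edges or a $K_2^=$'') is false: a theta graph with all three internal paths of length at least two is $2$-connected, has neither a $K_4$ nor a $K_3^=$ minor, and is not a cycle. But since you ultimately defer to van der Holst anyway, the real defect is the unaddressed implication $(iii)\Rightarrow(i)$.
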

\begin{proof}
We have already shown ${\rm rd}(G)\geq \lambda(G)$ and $\lambda(K_4)=\lambda(K_3^=)= 2$.
So, by the minor monotonicity of ${\rm rd}(G)$, it suffices to show that (iii) implies (i).

Suppose $G$ has no minor isomorphic to $K_4$ or $K_3^=$.
We show ${\rm rd}(G)\leq 1$ by induction on $V(G)$.

We may assume that $G$ is 2-connected since otherwise
we can decompose $G$ into $G_1$ and $G_2$ such that $|V(G_1)\cap V(G_2)|=1$
and a one-dimensional deformation of any tensegrity realization of $G$ can be constructed from those of $G_1$ and $G_2$ obtained by induction.

A basic fact from graph theory says that, for a simple graph $H$, ${\rm tw}(H)\leq 2$ if and only if $H$ has no $K_4$ minor.
Since $G$ has no $K_4$ minor, $G$ admits a tree decomposition $(T,{\cal W})$ of width at most two.
We take an optimal tree decomposition $(T,{\cal W})$ such that the number of non-lacking bags is as small as possible.

If the tree decomposition is lacking, then we can apply Lemma~\ref{lem:lacking} to have ${\rm rd}(G)\leq {\rm tw}(G)-1=1$ and we are done.
So we may assume that there is at least one bag $W^*$ which is not lacking. 

Since $G$ is 2-connected and $W_i\not \subseteq W_j$ for distinct bags $W_i$ and $W_j$, the size of each bag is exactly  three.
Denote $W^*=\{v_1,v_2,v_3\}$,
and let $G_{1,2}$ be the subgraph of $G$ defined as the union of all paths between $v_1$ and $v_2$ internally disjoint from $v_3$.
Similarly, $G_{2,3}$ and $G_{3,1}$ are defined.
By the 2-connectivity of $G$ and the definition of $G_{i,j}$,
\begin{equation}\label{eq:last}
\text{$\{V(G_{1,2})\setminus \{v_1,v_2\}, V(G_{2,3})\setminus \{v_2,v_3\}, V(G_{3,1})\setminus \{v_1,v_3\}\}$ is a partition of $V(G)\setminus \{v_1,v_2,v_3\}$.}
\end{equation}

Suppose $G_{1,2}$ is a path between $v_1$ and $v_2$.
See Figure~\ref{fig:rd1} for an illustration.
%Then, by the definition of $G_{1,2}$, 
%$G$ has no parallel edges between $v_1$ and $v_2$.
%Moreover, since $W^*$ is not lacking, there must be at least one bag that %contains $v_1$ and $v_2$.
%Hence, $P$ has length at least two.
We denote the sequence of the vertices of the path  by $v_1=u_1, u_2,\dots, u_m=v_2$.
% Then,  in the tree decomposition $T$ there is a sub-path $P$ hanging at $W^*$ whose bags induce $G_1$.
By the definition of tree decompositions, $\{v_1,v_2\}$ separates $\{u_2,\dots, u_{m-1}\}$, 
and hence the path $G_{1,2}$ forms an ear in $G$.
Let ${\cal W}_0$ be the set of bags $W$ in $T$ satisfying $W\cap \{u_2,\dots, u_{m-1}\}\neq \emptyset$.
Since $P$ is an ear in $G$, we can obtain a new optimal tree decomposition $(T',{\cal W}')$ from $(T,{\cal W})$ by deleting ${\cal W}_0$ and then splitting $W^*$ into  the new path of bags $\{v_3,u_1,u_2\}, \{v_3,u_2,u_3\},\dots, \{v_3,u_{m-1},v_m\}$.
See Figure~\ref{fig:rd1}.
In the resulting tree decomposition, each new bag is lacking since each new bag consists of $v_3, u_i, u_{i+1}$ for some $i$ with $1\leq i\leq m-1$ and $G$ does not have parallel edges between $u_i$ and $u_{i+1}$. So, the number of non-lacking bags decreases at least by one.
This contradicts the choice of $(T,{\cal W})$.

\begin{figure}[t]
\centering
\begin{minipage}{0.45\textwidth}
\centering
\includegraphics[scale=0.7]{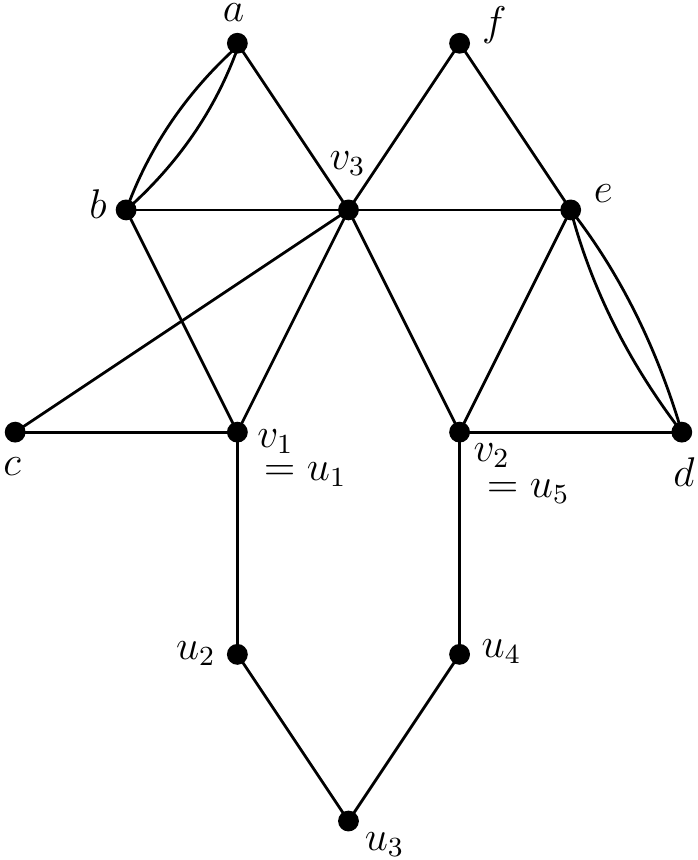}
\par
(a) $G$
\end{minipage}
\begin{minipage}{0.45\textwidth}
\centering
\includegraphics[scale=0.7]{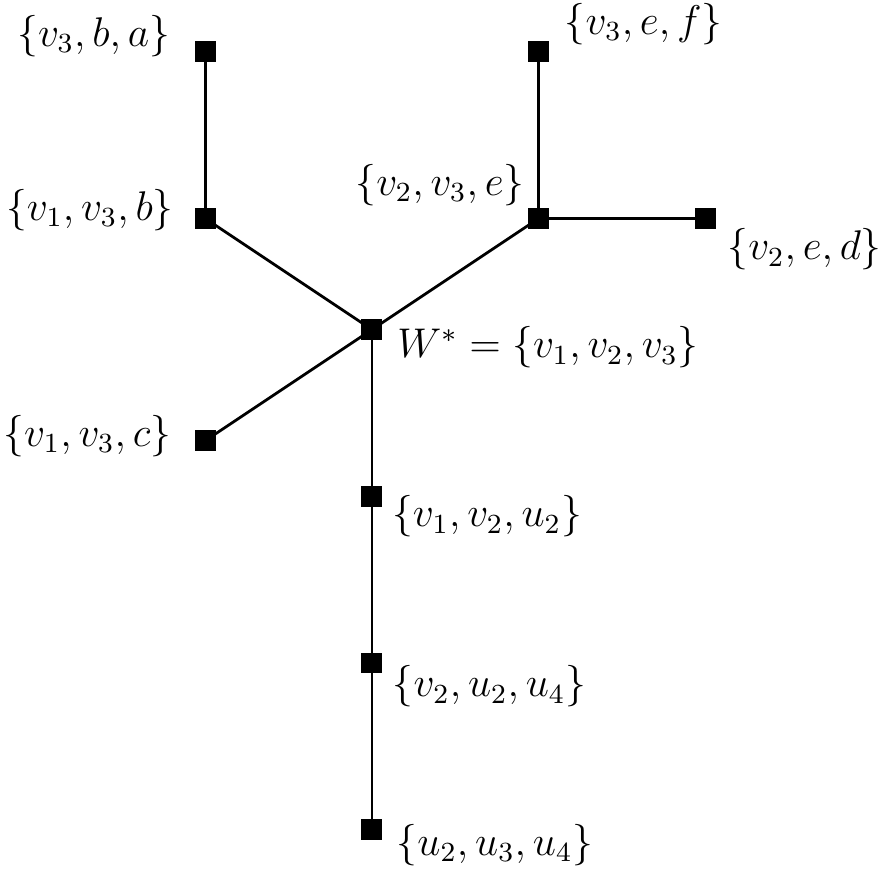}
\par
(b) $T$
\end{minipage}

\medskip

\begin{minipage}{0.45\textwidth}
\centering
\includegraphics[scale=0.7]{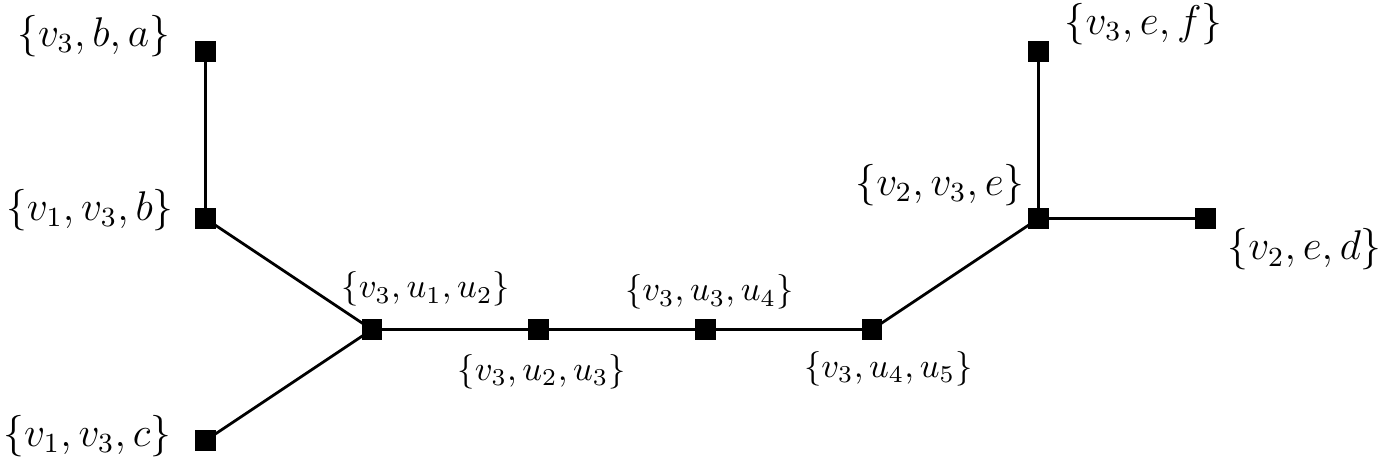}
\par
(c) $T'$
\end{minipage}
\caption{Proof of Theorem~\ref{thm:rd1}. For a given graph $G$ in Figure (a), Figure (b) is a tree decomposition $(T,{\cal W})$,
where the bag $W^*=\{v_1,v_2,v_3\}$ is not lacking.
$G_{1,2}$ is the path $u_1, u_2, u_3, u_4, u_5$ in Figure (a),
and ${\cal W}_0=\{\{v_1,v_2,u_2\}, \{v_2,u_2,u_4\}, \{u_2,u_3,u_4\}\}$ in Figure (b).
One can construct a new optimal tree decomposition as in Figure (c) without creating a non-lacking bag. 
}
\label{fig:rd1}
\end{figure}

Therefore, $G_{1,2}$ is not a path. Also $G_{1,2}$ is connected by the 2-connectivity of $G$. Symmetrically, $G_{i,j}$ is connected but not a path for any distinct $i,j$ with $i,j\in \{1,2,3\}$.
Then, in each $G_{i,j}$, there are at least two distinct paths between $v_i$ and $v_j$, which certify a $K_2^=$ minor rooted at $v_i$ and $v_j$.
The union of such a $K_2^=$ minor rooted at $v_i$ and $v_j$ over all distinct $i,j$ with $i,j\in \{1,2,3\}$ gives a $K_3^=$ minor in $G$ by (\ref{eq:last}).
This contradicts that $G$ has no $K_3^=$ minor.
\end{proof}

Characterizing multigraphs with realizable dimension at most two is an important open problem.
We formulate it as a conjecture.
\begin{conjecture}\label{con:rd2}
The following are equivalent for a multigraph $G$.
\begin{description}
\item[(i)] ${\rm rd}(G)\leq 2$.
\item[(ii)] $\lambda(G)\leq 2$.
\item[(iii)] $G$ has no minor isomorphic to a graph in Figure~\ref{fig:list}.
\end{description}
\end{conjecture}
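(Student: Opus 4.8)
Since $\lambda(G)\le {\rm rd}(G)$ by Proposition~\ref{prop:rd_lambda}, implication (i)$\Rightarrow$(ii) is immediate, and (ii)$\Leftrightarrow$(iii) is exactly the chain already used for Theorem~\ref{thm:characterization_super_stable}: Theorem~\ref{thm:equality} gives $\lambda(G)\le 2\iff\nu(G)\le 3$, and by van der Holst~\cite{H02} this holds iff $G$ has no minor in Figure~\ref{fig:list}. So the whole statement reduces to the single implication (iii)$\Rightarrow$(i): if $G$ has no minor in Figure~\ref{fig:list}, then ${\rm rd}(G)\le 2$, i.e.\ every tensegrity realization of $G$ admits a deformation into $\mathbb{R}^2$. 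As a first approximation, $\nu(G)\le 3$ excludes $K_5$ and $K_{2,2,2}$ as minors of ${\rm si}(G)$, so the Belk--Connelly characterization~\cite{BC} already gives ${\rm rd}(G^=)\le 3$, hence ${\rm rd}(G)\le 3$; the point is to save the last dimension. The plan is to prove ${\rm rd}(G)\le 2$ by induction on $|V(G)|$, following the folding strategy of the proof of Theorem~\ref{thm:rd1}, now driven by width-$3$ tree decompositions.

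\emph{Reduction to $3$-connected $G$.} If $G$ has a cut vertex, write $G=G_1\cup G_2$ with $V(G_1)\cap V(G_2)=\{a\}$; each $G_i$ is a subgraph of $G$, hence has no forbidden minor, so by induction its realizations fold into $\mathbb{R}^2$, and since $a$ carries no constraint between the two sides, a translation lining up the two copies of $a$ yields a planar deformation of $G$. If $G$ has a $2$-separator, split (recursively absorbing cut vertices on either side) so that $G=G_1\cup G_2$ with $V(G_1)\cap V(G_2)=\{a,b\}$ and $G_2$ contains two internally disjoint $a$--$b$ paths. Then $G_1^{+}:=G_1+(ab)^{=}$ and $G_2^{+}:=G_2+(ab)^{=}$, each side together with a bar on $\{a,b\}$, are minors of $G$ on strictly fewer vertices, so by induction their realizations fold into $\mathbb{R}^2$. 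Applied to the restrictions of a given realization $(G,\sigma,p)$, the bar $(ab)^{=}$ forces $\|q_i(a)-q_i(b)\|=\|p(a)-p(b)\|$ in both planar deformations $q_1,q_2$; a planar isometry then makes them agree on $\{a,b\}$, and gluing produces a planar deformation of $G$ because every edge of $G$ lies inside a single piece and there is no edge between the two open sides. Hence it suffices to treat $3$-connected $G$ with no minor in Figure~\ref{fig:list}.

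\emph{The $3$-connected case.} Here the plan is to exhibit a \emph{lacking} optimal tree decomposition and then apply Lemma~\ref{lem:lacking}. Two ingredients are needed. First, $\nu(G)\le 3$ already keeps the treewidth bounded, by the triangular-lattice remark after Corollary~\ref{cor:basic_relation} together with $\nu(\Delta_r)=r$; in the $3$-connected case I expect this to sharpen to ${\rm tw}(G)\le 3$, since the small-treewidth graphs with $\nu\ge 4$ (such as $\Delta_4$) are not $3$-connected, whereas the $3$-connected members of the class — wheels, prisms, ladders, and the few sporadic graphs that survive excluding the seven obstructions — all have width $3$. Second, for each such $G$ one builds a width-$3$ tree decomposition in which every largest bag contains a single (non-parallel) pair appearing in no other bag; for instance, for the wheel $W_n$ with hub $h$ and rim $v_1,\dots,v_n$ one may take the bags $\{h,v_i,v_{i+1},v_n\}$, where the rim edge $v_iv_{i+1}$ is single and lies in that bag only, and analogously for prisms and ladders. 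Lemma~\ref{lem:lacking} then gives ${\rm rd}(G)\le{\rm tw}(G)-1\le 2$; and since a $3$-connected graph on at least four vertices has a $K_4$ minor, $2=\lambda(K_4)\le\lambda(G)\le{\rm rd}(G)\le 2$, so equality holds throughout. The multigraph bookkeeping is what carries the content: a largest bag that is a doubled clique (or nearly so) is not lacking, and this is exactly what forces the exclusion of $K_4^{=}$ and of the three remaining multigraphs in Figure~\ref{fig:list} — they are precisely what separates $K_4$, with ${\rm rd}=\lambda=2$, from its heavier relatives with ${\rm rd}=3$.

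The hardest point, I expect, is converting ``$3$-connected with no minor in Figure~\ref{fig:list}'' into a usable structural statement: either sharpening the treewidth bound to $3$ for the $3$-connected members of the class, or, should a handful of small $3$-connected width-$4$ graphs survive, iterating the folding step via a ``repeatedly lacking'' decomposition or disposing of those finitely many graphs by hand, or else extracting exactly what is needed from van der Holst's proof of his characterization~\cite{H02}. A cleaner route, which would settle Conjecture~\ref{conj:rd} in this range at the same time, is to prove directly that ${\rm rd}(G)=\lambda(G)$ whenever $\lambda(G)\le 2$, by analysing non-lacking bags of a width-$3$ decomposition in the spirit of the ear-rerouting and forbidden-minor arguments in the proof of Theorem~\ref{thm:rd1}, now invoking all seven graphs of Figure~\ref{fig:list} rather than only $K_4$ and $K_3^{=}$. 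The connectivity reduction and the folding lemma are in place; the missing piece is the combinatorial control of the $3$-connected core.
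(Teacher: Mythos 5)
This is Conjecture~\ref{con:rd2} in the paper: the authors explicitly leave the implication (iii)$\Rightarrow$(i) open, so there is no proof to compare against, and the paragraph that follows the conjecture in the paper pre-empts the route you take. You are right that (i)$\Rightarrow$(ii) is Proposition~\ref{prop:rd_lambda} and that (ii)$\Leftrightarrow$(iii) follows from Theorem~\ref{thm:equality} together with van der Holst's characterization, so the entire content is (iii)$\Rightarrow$(i). Your plan for that direction --- reduce to $3$-connected $G$, show every such $G$ has a lacking width-$3$ tree decomposition, and fold via Lemma~\ref{lem:lacking} --- is exactly the strategy the authors say does not suffice: they note that the proof of Theorem~\ref{thm:rd1} produces lacking optimal tree decompositions in the one-dimensional case, but that ``such an argument is not enough for Conjecture~\ref{con:rd2}. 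For example, the Wagner graph with some parallel edges satisfies the third condition in Conjecture~\ref{con:rd2}, but we do not know how to bound the realizable dimension by two.''

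The Wagner graph $V_8$ refutes the structural claim your argument hinges on. You write that in the $3$-connected case you expect the treewidth bound to sharpen to $\tw(G)\le 3$, with ``wheels, prisms, ladders, and the few sporadic graphs'' as the surviving examples. But $V_8$ is $3$-connected, $3$-regular, satisfies condition (iii) even with a suitable set of edges doubled, and has treewidth $4$: indeed $V_8$ is one of the four forbidden minors for the class of partial $3$-trees, alongside $K_5$, $K_{2,2,2}$, and the pentagonal prism. So width-$3$ tree decompositions simply are not available for every $3$-connected member of the class, Lemma~\ref{lem:lacking} saves only one dimension from a width-$4$ decomposition, and you land at $\rd(G)\le 3$ rather than $\rd(G)\le 2$. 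You flag the $3$-connected core as ``the hardest point'' and ``the missing piece,'' which is candid and accurate --- but that missing piece is precisely the open content of the conjecture, not a deferred technicality. (Your $2$-separator reduction also needs a second look: the $(ab)^=$ bar is a minor of the $G_i$-side of $G$ only if the other side supplies two internally disjoint $a$--$b$ paths, and the gluing of the two planar deformations along $\{a,b\}$ must respect all cable/strut inequalities, which requires matching not just the $a$--$b$ distance but also a reflection choice --- but these are subordinate to the $3$-connected obstruction.)
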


The proof of Theorem~\ref{thm:rd1} shows that 
any multigraph having no $K_4$ or $K_3^=$ minor has a lacking optimal tree decomposition, and hence any realization can be folded to be one-dimensional by Lemma~\ref{lem:lacking}.
Such an argument is not enough for Conjecture~\ref{con:rd2}.
For example, the Wagner graph (see, e.g.,\cite{D}) with some parallel edges satisfies  the third  condition  in Conjecture~\ref{con:rd2}, but we do not know how to bound the realizable dimension  by two.

\paragraph{Acknowledgement} 
We thank the reviewer for their careful reading of the manuscript and their constructive comments.
The work was supported by 
JST PRESTO Grant Number JPMJPR2126
JST ERATO Grant Number JPMJER1903, and 
JSPS KAKENHI Grant Number 18K11155.

 \end{document}